\newcommand{\Q}{\mathcal Q}
\newcommand{\f}{\mathcal F}
\newcommand{\TTT}{\mathcal T}
\newcommand{\symm}{S}
\newcommand{\C}{\mathcal C}
\newcommand{\A}{\mathcal A}
\newcommand{\bgamma}{\bar\gamma}
\def\x#1{(\textnormal{mod}\ x^{#1})}
\DeclareMathOperator\Des{Des}
\DeclareMathOperator\cDes{cDes}
\DeclareMathOperator\CDes{CDes}
\DeclareMathOperator\maj{maj}
\DeclareMathOperator\cyc{cyc}
\DeclareMathOperator\sgn{sgn}
\def\CC{{\mathbb C}}
\def\ZZ{{\mathbb Z}}
\newcommand\comp[2]{\alpha{(#2,#1)}}
\newcommand\ccomp[2]{\alpha^{\cyc{(#2,#1)}}}
\newcommand{\cs}{\tilde s}
\newcommand{\ch}{{\operatorname{ch}}}
\def\SYT{{\rm SYT}}
\theoremstyle{plain}
\newtheorem{theorem}{Theorem}[section]
\newtheorem{proposition}[theorem]{Proposition}
\newtheorem{lemma}[theorem]{Lemma}
\newtheorem{corollary}[theorem]{Corollary}
\newtheorem{conjecture}[theorem]{Conjecture}
\newtheorem{problem}[theorem]{Problem}
\theoremstyle{definition}
\newtheorem{definition}[theorem]{Definition}
\newtheorem{defn}[theorem]{Definition}
\newtheorem{example}[theorem]{Example}
\newtheorem{remark}[theorem]{Remark}
\newtheorem{observation}[theorem]{Observation}
\newcommand{\card}[1]{\ensuremath{\left|#1\right|}}
\newcommand{\todo}[1]{\vspace{5 mm}\par \noindent
	\marginpar{\textsc{ToDo}} \framebox{\begin{minipage}[c]{0.9
				\textwidth}
			#1 \end{minipage}}\vspace{5 mm}\par}
\begin{document}
	
\title[Higher Lie Characters and Cyclic Descents on Conjugacy Classes]
{Higher Lie characters and\\ cyclic descent extension on conjugacy classes}

%\author{Ron M.\ Adin}
%\address{Department of Mathematics, Bar-Ilan University, 
%	Ramat-Gan 52900, Israel}
%\email{radin@math.biu.ac.il}

\author{Ron M.\ Adin}
\address{Department of Mathematics, Bar-Ilan University, 
	Ramat-Gan 52900, Israel}
\email{radin@math.biu.ac.il}

	\author{P\'al Heged\"us}
	\address{Department of Algebra, Institute of Mathematics, Budapest University of Technology and Economics, M\H uegyetem rkp. 3., H-1111 Budapest, Hungary}
	%\footnote{The research of the second author was partly supported by National Research, Development and Innovation Office - NKFIH (K 115799).}
	%Tel:+36 1 327 3053\\
	\email{hegpal@math.bme.hu}

%\author{P\'al Heged\"us}
%\address{Department of Mathematics, Central European University
%N\'ador utca 9, Budapest, 1051 Hungary}
%%\footnote{The research of the second author was partly supported by National Research, Development and Innovation Office - NKFIH (K 115799).}
%%Tel:+36 1 327 3053\\
%\email{hegedusp@ceu.edu}

\author{Yuval Roichman}
\address{Department of Mathematics, Bar-Ilan University, 
Ramat-Gan 52900, Israel}
\email{yuvalr@math.biu.ac.il}

\date{February 9, 2023}
%{June 20, 2022} %{Jan 3, 2019}
	
\thanks{PH was partially supported by Hungarian National Research, Development and Innovation Office (NKFIH) Grant No.~K115799 and by Bar-Ilan University visiting grant. The project leading to this application has received funding from the European Research Council (ERC) under the European Union's Horizon 2020 research and innovation programme, Grant agreement No.\ 741420. 
RMA and YR were partially supported by the Israel Science Foundation, grant no.\ 1970/18 and 
by an MIT-Israel MISTI grant. 
}
	
\maketitle	
	
\begin{abstract}
%		There is an established notion of descent set for permutations and standard Young tableaux.
%		Rhoades introduced a cyclic descent notion for standard Young tableaux -- but only for rectangular shapes.
%A cyclic extension of the classical notion of descent set, for permutations as well as for standard Young tableaux,
%has been introduced and studied in recent years.
%Cellini, Rhoades, Dilks, Petersen, Stembridge, Reiner and many others.	%to permutatin ons was introduced by Cellini, and to standard Young tableaux by Rhoades and others.
%, and to 
%		%-- but only for rectangular shapes.
%%whereas t
%%The family of shapes which carry a cyclic descent extension was recently characterized.
%%. 
%%	A generalization of this notion to %cyclic descent extension for 
%	arbitrary combinatorial sets
%	%, which carry a descent set map,  	was introduced 
%by Adin, Reiner and Roichman.
A now-classical cyclic extension of the descent set of a permutation has been introduced by Klyachko and Cellini.
Following a recent axiomatic approach to this notion, it is natural to ask which sets of permutations admit such a (not necessarily classical) extension.

The main result of this paper is
a complete answer in the case of conjucay classes of permutations.
%a full characterization of the conjugacy classes in the symmetric group which carry a cyclic descent extension. 
%Letting $S_n$ be the symmetric group on $n$ letters and $\C_\lambda \subset S_n$ be a conjugacy class of cycle type $\lambda$, it is shown that the descent map %$\Des$ on $\C_\lambda$.
It is shown that the conjugacy class of cycle type $\lambda$ 
has such an extension if and only if 
$\lambda$ is not of the form $(r^s)$ for some square-free $r$.
%The proof  applies a cyclic analogue of a
%classical result of Gessel and Reutenauer, which describes the fiber sizes of the descent set map as inner products of characters, and 
The proof involves a detailed study of hook constituents in
higher Lie characters.
\end{abstract}

{\bf MSC}: 05E10, 05E05, 20B30, 20C30
	
{\bf Keywords}: Cyclic descent, conjugacy class, symmetric group, higher Lie character, hook constituent 

\tableofcontents

\section{Introduction}
%%%%%%%%%%%%%%%%%%%%%%

\subsection{Background and main result}%\ \\

%Permutations %, as well as standard Young tableaux, 
%are equipped with a classical notion of descent set. 
The study of descent sets for permutations may be traced back to Euler. 
A cyclic extension of this classical concept was introduced in the study of
Lie algebras~\cite{Klyachko} and descent algebras~\cite{Cellini}. Surprising connections of the cyclic descent
notion to a variety of mathematical areas were found later.
	
The {\em descent set} of a permutation $\pi = [\pi_1, \ldots, \pi_n]$ in the symmetric group $\symm_n$ on $n$ letters is
\[
%{\rm{DES}}
\Des(\pi) := \{1 \le i \le n-1 \,:\, \pi_i > \pi_{i+1} \}
\quad \subseteq [n-1],
\]
where $[m]:=\{1,2,\ldots,m\}$.  
%For example, $\Des([2,1,4,5,3])=\{1,4\}$. 
%The notion of descent set, for permutations as well as for standard Young tableaux, is classical. 
Cellini~\cite{Cellini} introduced a natural notion of {\em cyclic descent set}: 
%\begin{equation}\label{e.cellini}
\[
%\CDes
\CDes(\pi) := \{1 \leq i \leq n \,:\, \pi_i > \pi_{i+1} \}
\quad \subseteq [n],
\]
%\end{equation}
with the convention $\pi_{n+1}:=\pi_1$.  
%For example, $\CDes([2,1,4,5,3])=\{1,4,5\}$.
The more restricted notion of cyclic descent number had been used previously by Klyachko~\cite{Klyachko}.
This cyclic descent set was further studied by Dilks, Petersen and Stembridge~\cite{DPS} and others.
%It has the following important properties.
%Consider the %two 
%$\ZZ$-action, on $\symm_n$ %and on the power set of $[n]$, 
%in which the generator $p$ of $\ZZ$ acts by 
%\[
%\begin{array}{rcl}
%[\pi_1,\pi_2,\ldots,\pi_{n-1},\pi_n]&\overset{p}{\longmapsto}&
%[\pi_n,\pi_1,\pi_2,\ldots,\pi_{n-1}]. %\\
%\{i_1,\ldots,i_k\}&\overset{p}{\longmapsto}&\{i_1+1,\ldots,i_k+1\} \bmod{n}.
%\end{array}
%\]

\smallskip
	
There exists a well-established notion of descent set for standard Young tableaux (SYT), but it has no obvious cyclic analogue. 
In a breakthrough work, Rhoades~\cite{Rhoades} defined a notion of cyclic descent set for standard Young tableaux of rectangular shape.
The properties common to Cellini's definition (for permutations) and Rhoades' construction (for SYT) appeared in other combinatorial settings as well~\cite{PPR, Pechenik, ER, AER}.
This led to an abstract definition~\cite{ARR}, as follows.
%
%A generalization of this notion to arbitrary combinatorial sets, which carry a descent set map, was introduced in~\cite{ARR}.
%
%To extend the cyclic descent set notion to standard Young tableaux and other combinatorial sets one needs the axiomatic definition.	
%
	
\begin{definition}\label{def:cDes}\cite{ARR}
Let $\TTT$ be a finite set, equipped with a set valued map (called {\em descent map}) %is any map
$\Des: \TTT \longrightarrow 2^{[n-1]}$. 
Let ${\rm shift}: 2^{[n]} \longrightarrow 2^{[n]}$ be the mapping on subsets of $[n]$ induced by the cyclic shift $i \mapsto i + 1 \pmod n$ of elements $i \in [n]$.
A {\em cyclic extension} of $\Des$ is
a pair $(\cDes,p)$, where 
$\cDes: \TTT \longrightarrow 2^{[n]}$ is a map 
and $p: \TTT \longrightarrow \TTT$ is a bijection,
satisfying the following axioms:  for all $T$ in  $\TTT$,
\[
\begin{array}{rl}
\text{(extension)}   & \cDes(T) \cap [n-1] = \Des(T),\\
\text{(equivariance)}& \cDes(p(T))  = {\rm shift}(\cDes(T)),\\
\text{(non-Escher)}  & \varnothing \subsetneq \cDes(T) \subsetneq [n].\\
\end{array}
\]
\end{definition}
	
%Let us begin by formalizing the concept of a cyclic extension.

The term ``non-Escher'' refers to M.\ C.\ Escher's drawing ``Ascending and Descending", which illustrates the impossibility
of the cases $\cDes(\pi) = \varnothing$ and $\cDes(\pi) = [n]$ for permutations $\pi \in S_n$.

		For connections of cyclic descents to 
		Kazhdan-Lusztig theory see~\cite{Rhoades}; for topological aspects and connections to the  
		Steinberg torus see~\cite{DPS}; for twisted Sch\"utzenberger promotion see~\cite{Rhoades, Huang}; for cyclic quasisymmetric functions and Schur-positivity see~\cite{AGRR};	for Postnikov's toric Schur functions see~\cite{ARR}.
		The goal of this paper is to determine which conjugacy classes of the symmetric group carry a cyclic descent extension.

\bigskip

Cellini's cyclic descent set, denoted $\CDes$, is a special case of a cyclic descent extension, denoted in general $\cDes$, as attested by the following observation. 
		
\begin{observation}
Let $\Des$ and $\CDes$ denote the classical descent set and Cellini's cyclic descent set on permutations, respectively.
Let $p: S_n \rightarrow S_n$ be the rotation 
\[
\begin{array}{rcl}
[\pi_1,\pi_2,\ldots,\pi_{n-1},\pi_n]&\overset{p}{\longmapsto}&
[\pi_n,\pi_1,\pi_2,\ldots,\pi_{n-1}]. %\\
%\{i_1,\ldots,i_k\}&\overset{p}{\longmapsto}&\{i_1+1,\ldots,i_k+1\} \bmod{n}.
\end{array}
\]
Then the pair $(\CDes,p)$ is a cyclic descent extension  of $\Des$  %$\rm{DES}$ 
on $S_n$ in the sense of Definition~\ref{def:cDes}.
\end{observation}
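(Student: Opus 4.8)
The statement is a direct verification, so the plan is simply to check, one at a time, the two structural requirements on the pair $(\CDes,p)$ — that $p$ is a bijection — and the three axioms of Definition~\ref{def:cDes}. That $p$ is a bijection of $S_n$ is immediate: it has the explicit two-sided inverse $[\pi_1,\ldots,\pi_n]\mapsto[\pi_2,\ldots,\pi_n,\pi_1]$. The \emph{extension} axiom is essentially a matter of comparing definitions: since $\CDes(\pi)=\{1\le i\le n:\pi_i>\pi_{i+1}\}$ with the convention $\pi_{n+1}:=\pi_1$, while $\Des(\pi)=\{1\le i\le n-1:\pi_i>\pi_{i+1}\}$, intersecting the former with $[n-1]$ returns the latter with nothing further to do.

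For \emph{equivariance}, I would set $\sigma:=p(\pi)$, so that $\sigma_1=\pi_n$ and $\sigma_j=\pi_{j-1}$ for $2\le j\le n$, with the cyclic convention $\sigma_{n+1}=\sigma_1=\pi_n$, and then run through the positions $j\in[n]$. For $2\le j\le n-1$ the condition $\sigma_j>\sigma_{j+1}$ reads $\pi_{j-1}>\pi_j$; for $j=n$ it reads $\pi_{n-1}>\pi_n$; and for $j=1$ it reads $\pi_n>\pi_1$, which, using $\pi_{n+1}=\pi_1$, is exactly the condition $n\in\CDes(\pi)$. In every case $j\in\CDes(\sigma)$ if and only if $j-1\in\CDes(\pi)$ with indices read modulo $n$, i.e. $\CDes(p(\pi))={\rm sh}(\CDes(\pi))$, as required.

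For \emph{non-Escher}, note that $\CDes(\pi)=\varnothing$ would force the cyclic chain $\pi_1<\pi_2<\cdots<\pi_n<\pi_1$, and $\CDes(\pi)=[n]$ would force $\pi_1>\pi_2>\cdots>\pi_n>\pi_1$; both are impossible, so $\varnothing\subsetneq\CDes(\pi)\subsetneq[n]$ for every $\pi\in S_n$. Assembling these four observations proves the Observation. No step presents a genuine obstacle; the only point requiring any care is the bookkeeping of the wrap‑around index (the position $n$, and its image $1$ under ${\rm sh}$) in the equivariance check.
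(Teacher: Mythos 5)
Your verification is correct and complete; the paper states this as an Observation with no written proof, treating it as immediate, and your direct check of the bijectivity of $p$ and of the extension, equivariance, and non-Escher axioms (including the careful handling of the wrap-around positions $1$ and $n$) is exactly the routine argument the paper leaves implicit.
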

	
	%See also~\cite{???}.
%\smallskip

Unlike the full symmetric group, for many conjugacy classes,   
%the image of Cellini's cyclic descent map is not closed under cyclic rotation modulo $n$, hence cannot be used to prove the existence of a cyclic descent extension;
Cellini's definition does not provide a cyclic extension, see Section~\ref{sec:Cellini} below.

\begin{example}
%\todo{Insert example(s)}
Consider the conjugacy class of 4-cycles in $S_4$,
\[
\C_{(4)}=\{2341, 2413, 3142, 3421, 4123, 4312\}.
\]
Cellini's cyclic descent sets are
\[
\{3\}, \{2,4\}, \{1,3\}, \{2,3\}, \{1\}, \{1,2\},
\]
respectively; this family is not closed under cyclic rotation.
On the other hand, redefining the cyclic descent sets to be
\[
\cDes(2341)=\{3,4\},\ \cDes(2413)=\{2,4\},
\  \cDes(3142)=\{1,3\}, 
\]
\[
\cDes(3421)=\{2,3\},\ \cDes(4123)=\{1,4\},\ \cDes(4312)=\{1,2\}
\]
and defining the map $p$ by
\[
2341 \rightarrow 4123 \rightarrow 4312 \rightarrow 3421 \rightarrow 2341
\]
and
\[
3142 \rightarrow 2413 \rightarrow 3142,
\]
the pair $(\cDes,p)$ does determine a cyclic extension of $\Des$ for this conjugacy class.
\end{example}
	
The goal of this paper is to show that most conjugacy classes in $S_n$ carry a cyclic descent extension.
%	characterize the conjugacy classes in the symmetric group $S_n$, which carry a cyclic descent extension. 
%Our main result is the following full characterization. 
In fact, we obtain a full characterization.

\smallskip

Recall that an integer is \emph{square-free} if no prime square divides it; in particular, $1$ is square-free. Our main result is 
%\subsection{Main Results}\ \\
	
\begin{theorem}\label{thm:main}
%For a partition $\lambda$ of $n$ l
Let $\lambda$ be a partition of $n$, and let
$\C_\lambda \subseteq S_n$ be the corresponding conjugacy class. % of cycle type $\lambda$.
The descent map %{\rm{DES}}  
$\Des$ 
%\todo{Sh: This should be `Des' as in its definition on top of p2.}
on $\C_\lambda$  has a cyclic extension $(\cDes,p)$ if and only if 
%There exists a cyclic descent extension on $\C_\lambda$ if and only if 
$\lambda$ is not of the form $(r^s)$ for some square-free $r$.
\end{theorem}
	
%\medskip
	
%\todo{Suggestion: The exceptional conjugacy classes $\lambda = (r^s)$ for $r$ square-free are henceforth called \emph{stretched}. In their exceptional property they correspond to the connected ribbons of Theorem~\ref{conj1}.}

\subsection{Proof method} 
	
The proof of Theorem~\ref{thm:main} is non-constructive and   
%applies a
%%a %cyclic analogue of a
%remarkable theorem of  Gessel and Reutenauer
%%~\cite[Theorem 2.1]{GR}, %  Theorem~\ref{thm:GR}
%%\cite[Theorem 2.1]{GR}, 
%which describes the fiber sizes of the descent set map on conjugacy classes as scalar products %of characters 
%%symmetric functions
%(Theorem~\ref{thm:GR} below), and 
involves a detailed study of the hook constituents in higher Lie characters.

\subsubsection{Higher Lie characters}

\begin{definition}\label{def:higher}
For a partition $\lambda$ of $n$, let $\C_\lambda$ be the conjugacy class consisting of all the permutations in $S_n$ of cycle type $\lambda$,
and let $\chi^\lambda$ denote the irreducible $S_n$-character corresponding to $\lambda$.
Let $Z_\lambda$ be the centralizer of a permutation in $\C_\lambda$ (defined up to conjugacy). 
If $k_i$ denotes the number of parts of $\lambda$ equal to $i$, then $Z_\lambda$ is isomorphic to the direct	product $\times_{i=1}^n \ZZ_i\wr S_{k_i}$.
Here and in the rest of the paper $\ZZ_i$ denotes the cyclic group of order $i$.
%Consider the following linear character of $Z_\lambda$:
%denoted by $\omega$
	
%	\begin{definition}\label{def:psi}
For each $i$, let $\omega_i$ be the linear character on $\ZZ_i\wr S_{k_i}$ indexed by the
$i$-tuple of partitions $(\emptyset, (k_i), \emptyset,
\dots,\emptyset)$. 
In other words, let $\zeta_i$ be a primitive irreducible character on the cyclic group $\ZZ_i$, and extend it to the wreath product $\ZZ_i\wr S_{k_i}$ so that it is homogeneous on the base subgroup $\ZZ_i^{k_i}$ and trivial on the wreathing subgroup $S_{k_i}$. Denote this extension by $\omega_i$.
Now let
\[
\omega^\lambda := \bigotimes_{i=1}^n \omega_i,
\]
a linear character on $Z_\lambda$. 
Define the corresponding {\em higher Lie character} to be the induced character
\[
\psi^\lambda := \omega^\lambda\uparrow_{Z_\lambda}^{S_n}.
\]
\end{definition}

The study of higher Lie %module/
characters can be traced back %essentially 
to Schur~\cite{schur}. % and Thrall~\cite{thrall}. 
An old problem of Thrall~\cite{thrall} 
%can be recast, using Gessel's {\em necklace bijection} \cite{Gessel-Reutenauer}, as follows; 
is to provide an explicit combinatorial interpretation of the 
%	The 
multiplicities of the irreducible characters in the higher Lie character,
%were described combinatorially,
see also~\cite[Exercise 7.89(i)]{EC2}.
Only partial results are known: the case $\lambda = (n)$ was solved by Kraskiewicz and Weyman~\cite{KraskiewiczWeyman};
D\'esarm\'enien and Wachs~\cite{DesarmenienWachs} resolved a coarser version of Thrall's problem %, with this explicit expansion
for the sum of higher Lie characters over all  %fine
derangements, see also~\cite{ReinerWebb}. 
The best result so far is
Schocker's expansion~\cite[Theorem~3.1]{schocker}, which however involves signs and rational coefficients. 
For recent discussions see, e.g., \cite{Reiner, AS, Sundaram}.

A remarkable theorem of  Gessel and Reutenauer~\cite[Theorem 2.1]{GR} 
%~\cite[Theorem 2.1]{GR}, %  Theorem~\ref{thm:GR}
%%\cite[Theorem 2.1]{GR}, 
applies higher Lie characters to describe the fiber sizes of the descent set map on conjugacy classes.  
%as scalar products of character
%As a result, 
Their proof applies an interpretation of higher Lie character $\psi^\lambda$ in terms of quasisymmetric functions % was given by Gessel and Reutenauer, 
(Theorem~\ref{thm:GR22} below). 
It follows that 
higher Lie characters can be used to prove the existence of cyclic descent extensions 
as explained below.

\subsubsection{Hook multiplicities and cyclic descent extensions}\ \\

Recall the standard notation $s_\lambda$ for the Schur function indexed by a partition $\lambda$, 
as well as $\f_{n,D}$ for the fundamental quasisymmetric function indexed by a subset $D\subseteq [n-1]$;
see Definition~\ref{def:fundamental_QSF}.
A subset $\A\subseteq S_n$ is {\em Schur-positive} 
if the
associated quasisymmetric function
\[
\Q(\A):=\sum\limits_{a\in \A}\f_{n,\Des(a)},
\]
is symmetric and Schur-positive.

For an integer $0 \le k<n$ and a Schur-positive subset $\A \subseteq S_n$ denote
\[
m_{k,\A} := \langle \Q(\A), s_{(n-k,1^k)} \rangle,
\]
where $s_{(n-k,1^k)}$ is the Schur function indexed by the hook partition $(n-k,1^k)$.

First we prove the following key lemma, which provides an algebraic criterion for the existence of a cyclic descent extension. 

\begin{lemma}\label{lem:21}
A Schur-positive set $\A\subseteq S_n$ has a cyclic descent extension if and only if 
the following two conditions hold:
\[
\begin{array}{rl}
	\text{\rm (divisibility)}   & 
	\text{the polynomial }
	\sum_{k=0}^{n-1} m_{k,\A} x^k
	%= (1+x)\sum_{k=0}^{n-2} d_k x^k ,\\
	\text{ is divisible by } 1+x; \\
	\text{\rm (non-negativity)} & 
	%d_k\ge 0 \qquad (\forall k).\\
	\text{the quotient has nonnegative coefficients.}
\end{array}
\]
\end{lemma}		

See Lemma~\ref{lem:2}  below.

\subsubsection{Divisibility}\ \\

By the %a remarkable theorem of 
Gessel-Reutenauer theorem, %~\cite{GR}, 
for every 
%partition $\lambda$ 
conjugacy class $\C_\lambda$ 
the quasisymmetric function %of the conjugacy class $\C_\lambda$, 
$\Q(\C_\lambda)$ is the Frobenius image of the higher Lie character $\psi^\lambda$, thus  $\C_\lambda$ is  Schur-positive; 
see Theorem~\ref{thm:GR22} below.

\medskip	

For a partition $\lambda \vdash n$ denote
\begin{equation}\label{notation:m_k_hLc}
m_{k,\lambda} := m_{k,\C_\lambda}
= \langle \Q(\C_\lambda), s_{(n-k,1^k)} \rangle
= \langle \psi^\lambda, \chi^{(n-k,1^k)} \rangle.
\end{equation}
Next we prove 
    
\begin{proposition}\label{prop:divisibility} 
The hook-multiplicity generating function of the higher Lie character $\psi^\lambda$
\[
M_\lambda(x):=\sum\limits_{k=0}^{n-1}m_{k,\lambda}x^k
%=\sum\limits_{k=0}^{n-1}\langle \psi^\lambda, \chi^{(n-k,1^k}\rangle x^k,
\]
%where 
%\[
%m_{k,\lambda}:=\langle \psi^\lambda, \chi^{(n-k,1^k}\rangle
%\]
is divisible by $1+x$ 
if and only if $\lambda$ is not of the form $(r^s)$ for a square-free integer $r$.
\end{proposition}

This divisibility condition is proved using an explicit evaluation of the higher Lie character on $n$-cycles.; see Section~\ref{sec:divisibility} below.
    
\subsubsection{Non-negativity}\ \\

In order to prove Theorem~\ref{thm:main}, it remains to show that the coefficients of the quotient  $M_\lambda(x)/(1+x)$ are nonnegative, whenever $\lambda$ is not of the form $(r^s)$ for a square-free $r$. 
%the character of the generalized Specht module of a certain connected ribbon shape.
It turns out that partitions $\lambda$ with more than one cycle length are the easiest to handle. 
In that case, a factorization of the associated higher Lie character $\psi^\lambda$ is applied to prove

\begin{lemma}\label{lem:distinct}
Let $\lambda = \mu \sqcup \nu$ be a disjoint union of nonempty partitions with no common part. 
% $\mu$ is a partition of $n-rs$ with no part equal to $r$. 
%Let $\lambda=(r^s)\sqcup \mu$ be a partition of $n$,
%where $\mu$ is a partition of $n-rs$ with no part equal to $r$. 
Then
\begin{equation}
\frac{M_\lambda(x)}{1+x} = M_\mu(x) M_\nu(x),     
\end{equation}
and its coefficients are thus non-negative.
\end{lemma}	

%Non-negativity of the coefficients of $M_\mu(x)/(1+x)$ follows. 
%to establish the existence of a cyclic descent extension on $\C_\lambda$. %; 

%%by using the existence of a cyclic descent extension on the set of standard Young tableaux of a disconnected skew shape;
%see Section~\ref{sec:distinct} 
%%Proposition~\ref{prop:2lengths} 
%below.
%
%\smallskip
%
The core of the proof of Theorem~\ref{thm:main} is %thus 
the case of $\lambda = (r^s)$.
For a fixed positive integer $r$, consider the formal power series
\[
M_r(x,y) := \sum_{\substack{i \ge 0 \\ s \ge 1}} m_{i,(r^s)} x^i y^s,
\]
where $m_{i,(r^s)}$ is the multiplicity of the hook character $\chi^{(rs-i,1^i)}$ in the higher Lie character $\psi^{(r^s)}$. 
%     By Lemma~\ref{lem:21}, 
The following theorem 
%Theorem~\ref{t:main_hook-alternating},
%which %is the main tool in proving 
completes the proof of Theorem~\ref{thm:main}.

%$m_{i,s}:=\langle \psi^{(r^s)},\chi^{(rs-i,1^i)}\rangle$. 
%From the definition it is clear that the degree of the polynomial $N_{r,s}(x)$ is at most $rs-1$.
%    \todo{This coming paragraph is to be made in accordance with the corresponding paragraph on page \pageref{proofscheme2}.\label{proofscheme1}}
%    Combining the necessary and sufficient conditions for Schur-positive sets to carry a cyclic descent extension (Lemma~\ref{lem:21}) %, combined 
 %   with a reformulated Gessel-Reutenauer Theorem
 %   (Theorem~\ref{thm:GR22} below), it is shown that the conjugacy class $\C_{(r^s)}$ carries a cyclic descent extension if and only
 %   if $\frac{N_{r,s}(x)}{1+x}$ is a polynomial with non-negative integer coefficients. 
    %Theorem~\ref{t:main_hook-alternating}

%\langle\psi^{(r^s)},(\varepsilon_{S_i}\times 1_{S_{rs-i}})^{S_{rs}})\rangle. 

%	\begin{proposition}\label{t:main_hook-alternating1}

%\begin{itemize}
%\item[1.]	

%The following are equivalent:
%	\begin{enumerate}
%	    \item[(i)] The polynomial $N_{r,1}(x)=\sum_{i=0}^r m_{i,1} x^i$ is divisible by $(x+1)$;
%	    \item[(ii)] For every $s\geq 1$, the polynomial $N_{r,s}(x)=\sum_{i=0}^{rs} m_{i,s}x^i$ is divisible by $(x+1)$;
%	    \item[(iii)] $r$ is not square-free.
%	\end{enumerate}
%\item[2.]	Furthermore, if $r$ is not square-free then the polynomial 
%\[\frac{M_r(x,y)}{1+x}\] 
%	has non-negative integer coefficients.
%%	   The quantity 
%%	    \[\frac{E_r(x,y)-1}{(1+x)^2}\] 
%%	    is a bivariate polynomial if and only if $r$ is not square free;
%%	    in that case
%%	     all coefficients are non-negative.
%\end{itemize}
%    \end{proposition}

\begin{theorem}\label{t:main_hook-alternating2}
    If $r$ is not square-free then the formal power series 
    \[
    \frac{M_r(x,y)}{1+x}
    \] 
	has non-negative integer coefficients.
\end{theorem}

\begin{proof}[Proof of Theorem~\ref{thm:main}.]
    Combine Lemma~\ref{lem:21}, Proposition~\ref{prop:divisibility}, Lemma~\ref{lem:distinct} and Theorem~\ref{t:main_hook-alternating2}. 
\end{proof}

\subsection{Outline}

The rest of the paper is organized as follows. 

Necessary background, including a cyclic analogue of the Gessel-Reutenauer Theorem, is given in Section~\ref{sec:background}.
%In Section~\ref{sec:lengths}, the easy case of conjugacy classes with cycles of different lengths is treated. 

The role of hooks in the study of cyclic descent extensions is explained in Section~\ref{sec:hooks}.
In particular, a necessary and sufficient criterion
%condition 
for Schur-positive sets to carry a cyclic descent extension (Lemma~\ref{lem:21}) is proved in Subsection~\ref{sec:hook_criterion}.
Using this criterion, the proof of Theorem~\ref{thm:main} is reduced
%in Subsection~\ref{sec:divisibility},
to Proposition~\ref{prop:divisibility} (divisibility) and  Theorem~\ref{t:main_hook-alternating2} (non-negativity).
Proposition~\ref{prop:divisibility} is proved in Subsection~\ref{sec:divisibility}.  

The proof of Theorem~\ref{t:main_hook-alternating2}, stating the non-negativity of the coefficients of $M_\lambda(x)/(1+x)$ whenever this quotient is a polynomial, spans 
Sections~\ref{sec:distinct}, \ref{sec:cycles} and~\ref{sec:Witt}:
%two phases: the case of $n$-cycles is proved in Sections~\ref{sec:cycles}; 
%the proof is   lifted to 
%the case of $r^s$ with $s>1$ is proved in Section~\ref{sec:Witt};
%the remaining case of cycles of distinct lengths is proved, using different methods, in Section
%The latter theorem is then proved in Section~\ref{sec:Witt}. 
%The %heart of proof is the $\lambda = r^s$ case; the 
%proof of this case consists of %is done in 
%several phases. 
the case of more than one cycle length is considered in Section~\ref{sec:distinct};
the case of $n$-cycles %, namely $s=1$, 
is considered in Section~\ref{sec:cycles};
and the case of cycle type $(r^s)$ with $s>1$ is considered in Section~\ref{sec:Witt}.
%The case of more than one cycle length is considered in Section~\ref{sec:distinct}, 
%In Section~\ref{sec:cycles} we prove %the $s=1$ case, i.e., 
In the case of more than one cycle length, 
non-negativity is proved using a factorization of the associated higher Lie character (Lemma~\ref{lem:distinct}).  
In the case of $n$-cycles,  
combining a combinatorial formula for inner products with a variant of the Witt transform 
proves unimodality of the sequence of hook-multiplicities (Proposition~\ref{prop:s=1_unimodal}). This, in turn, implies the desired non-negativity of the quotient.  

%The proof in the case of $n$-cycles involves a unimodality phenomenon (Proposition~\ref{prop:s=1_unimodal}). 

In Section~\ref{sec:Witt} we lift the $n$-cycle result to the case of cycle type $(r^s)$ with $s>1$. 
In Subsection~\ref{sec:e} we provide an explicit expression for the coefficients of $(1+x)M_r(x,y)$, see Theorem~\ref{prop:e_i_formuli}.
This expression is used to obtain a product formula 
for the bivariate polynomial $1+(1+x)M_r(x,y)$
in Subsection~\ref{sec:prod}, 
and to deduce Theorem~\ref{t:main_hook-alternating2} in Subsection~\ref{sec:nn}. 
%See Subsection~\ref{sec:cycles} %\ref{sec:outline}  
% for a more detailed outline of the proof. 
%The remaining case of cycles of distinct lengths is proved, using different methods, in Section~~\ref{sec:proof_main}, 
%completing the proof of Theorem~\ref{thm:main}. 
%The proofs of non-negativity apply a variant of the Witt transform together with 
%combinatorial formulas for inner product in Theorem~\ref{prop:e_i_formuli1} and Theorem~\ref{prop:e_i_formuli}).
%This proof is lifted to $(r^s)$ with $s>1$ using a product formula for $E_r(x,y):=1 + (1 + x) M_r(x,y)$.

Additional results are presented in Section~\ref{sec:final}. 
In Subsection~\ref{remarks_on_s=1} it is shown that %certain results in the paper 
Lemma~\ref{lem:char_eval_cycles} and Theorem~\ref{prop:e_i_formuli} 
imply well-known combinatorial identities. 
In Subsection~\ref{sec:Cellini} it is shown that the natural approach does not provide a cyclic descent extension for conjugacy classes in $S_n$.
Palindromicity of the hook-multiplicity generating function $M_{(r^s)}(x)$ is studied in Subsection~\ref{sec:palindrom}. 

Section~\ref{sec:open} concludes the paper with final remarks and open problems.

	%associated induced characters.
	%\begin{equation}\label{ribbo
%A cyclic analogue of Gessel-Reutenauer's theorem follows.

%	\todo{Explain and expand and this pa ??}

\bigskip

\noindent
{\bf Acknowledgements.} The second author is grateful for the hospitality of Bar-Ilan University during his Sabbatical leave.
Special thanks are due to Jan Saxl, whose personality influenced the development of this paper in many ways; in particular, the paper is inspired by the  
ingenious use of a higher Lie character in~\cite{Saxletal}.
Thanks also %to Ron Adin %, Jonathan Bloom  %and Sergi Elizalde 
%for extensive discussions and 
%Thanks also 
to Eli Bagno, Jonathan Bloom, Sergi Elizalde, Darij Grinberg, 
Vic Reiner, Richard Stanley, Sheila Sundaram and Josh Swanson for their comments.

\section{Preliminaries}\label{sec:background}

%	In Subsection~\ref{sec:prel1} we discuss %explain and reformulate
%	present the original formulation of 
%Gessel-Reutenauer theorem %and explain its equivalence to Theorem~\ref{thm:GR2}.
	%We will 
	%and also discuss in brief %the more general 
% as well as
	%basic properties of 
	The role of quasisymmetric %and symmetric functions 
	in the study of 
	the distribution of descent %and cyclic descent 
	sets is discussed in Subsection~\ref{sec:prel1}. 
	The results presented  here are %in this subsection are 
	used in Section~\ref{sec:hooks} 
	%the rest pf the paper 
	to establish 
	the reduction of existence of cyclic descent extension on conjugacy classes to the study of hook-multiplicities in higher Lie characters.
%Theorem~\ref{thm:GR22} reformulates the classical Gessel-Reutenauer Theorem. %%(Theorem~\ref{thm:GR}). 
%This theorem is most crucial: it reduces the existence of CDE on conjugacy classes to the study of the hook-multiplicities g.f. of higher Lie characters.\\
% The key Lemma~\ref{lem:2} is proved using 
%	Corollary~\ref{cor:Schur-gf} %(which results from Gessel's Theorem~\ref{G1}) 
%	and Theorem~\ref{conj1}. The combination of these two complementary statements is used in the proof in Remark~\ref{rem:distinct} as well
	In Subsection~\ref{sec:prel2} we  
	%and their intimate relations to distribution of 
	%descent sets, to be used in later sections. 
	%We will also 
%	reformulate Gessel and  Gessel-Reutenauer's classical theorem and 
present cyclic analogues of a few classical results. These analogues are used for  
certain enumerative applications; the reader may skip this subsection.

%	\todo{%YR: Crucial to the paper:\\
%	A brief survey of this section:\\
%	{\bf 1.} Necessary prelimanries: Definition~\ref{def:fundamental_QSF}.\\ 
%	 {\bf 2.}	Theorem~\ref{thm:GR22} reformulates the classical Gessel-Reutenauer Theorem (Theorem~\ref{thm:GR}). this theorem is most crucial: it reduces the existence of CDE on conjugacy classes to the study of the hook-multiplicities g.f. of higher Lie characters.\\
%	{\bf 3.} The key Lemma~\ref{lem:2} . is proved using 
%	Cor.~\ref{cor:Schur-gf} (which results from Gessel's Theorem~\ref{G1}) and Theorem~\ref{conj1}. The combination of these two ``complementary" statements is used in the proof in Remark~\ref{rem:distinct} as well. \\ 
%	{\bf 4.} The enumerative part of Lemma~\ref{lem:2} (Part 2) is proved using Corollary~\ref{cor:cDes-dist-Schur},  
%	which, in turn, requires the classical Gessel Theorem  (Theorem~\ref{thm:fiber-Des})   
%	 and its cyclic analogue (Theorem~\ref{thm:fiber-cDes}). This enumerative counterpart of the paper is not a must, could be postponed to an appendix or omitted.\\ 
%	 {\bf 5.} Finally, the classical original version of Gessel-Reutenauer Theorem~(Thm~\ref{thm:GR}) is used in the appendix, while its cyclic analogue (Thm~\ref{thm:main22}) completes the picture but not used in the paper.
%		}
	
	\subsection{Quasisymmetric functions and descents}\label{sec:prel1} 
	A symmetric function is called {\em Schur-positive} if all the
	coefficients in its expansion in the basis of Schur functions are non-negative. Recall the notation 
		 $s_{\lambda/\mu}$ for the Schur function indexed by a 
		 %partition 
		 skew shape $\lambda/\mu$. 
	
	\begin{defn}\label{def:fundamental_QSF}
	For each subset $D \subseteq [n-1]$ define the {\em
		fundamental quasisymmetric function}
	\[
	\f_{n,D}({\bf x}) := \sum_{\substack{i_1\le i_2 \le \ldots \le i_n \\ {i_j < i_{j+1} \text{ if } j \in D}}} 
	x_{i_1} x_{i_2} \cdots x_{i_n}.
	\]
	\end{defn}
		The following theorem is due to Gessel.
		
		Denote the set of standard Young tableau of skew shape $\lambda/\mu$
by $\SYT(\lambda/\mu)$.
There is an established notion of descent set for $\SYT(\lambda/\mu)$
\begin{equation}\label{def:Des_SYT}
\Des(T) := \{1 \le i \le n-1 \,:\,
            i+1 \text{ appears in a lower row of }T\text{ than }i\}.
\end{equation}

\smallskip
		
%\todo{This reference to Stanley should be to \cite{Stanley_ECII}!}
\begin{theorem}{\rm \cite[Theorem 7.19.7]{EC2}}\label{G1} 
For every skew shape $\lambda/\mu$,
\[
\sum\limits_{T\in \SYT(\lambda/\mu)}\f_{n,\Des(T)}=s_{\lambda/\mu}.
\]
\end{theorem}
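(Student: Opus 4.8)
The plan is to prove the identity directly from the monomial definition of the skew Schur function, organizing the semistandard tableaux by their standardizations. First I would recall that $s_{\lambda/\mu} = \sum_{T} {\bf x}^T$, where $T$ ranges over all semistandard Young tableaux (SSYT) of shape $\lambda/\mu$ -- fillings with positive integers whose rows weakly increase and whose columns strictly increase -- and ${\bf x}^T := \prod_c x_{T(c)}$ records the content. The strategy is to partition $\mathrm{SSYT}(\lambda/\mu)$ according to a standardization map onto $\SYT(\lambda/\mu)$ and to identify each fiber's generating function with a single fundamental quasi-symmetric function.

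Next I would define the standardization map $\operatorname{std}\colon \mathrm{SSYT}(\lambda/\mu) \to \SYT(\lambda/\mu)$. Given $T$, relabel its $n$ entries by $1,2,\dots,n$ so as to preserve the relative order of the values, breaking ties among equal entries by a fixed convention: cells of equal value receive consecutive labels read from bottom to top (lower rows get the smaller labels), and within a single row in order of increasing column. Since columns of an SSYT strictly increase, no two equal entries share a column, so this rule is well defined and the output $P=\operatorname{std}(T)$ is a genuine standard Young tableau of shape $\lambda/\mu$.

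The heart of the argument is the fiber identity $\sum_{T:\operatorname{std}(T)=P}{\bf x}^T = \f_{n,\Des(P)}$ for each $P\in\SYT(\lambda/\mu)$. To prove it, I would observe that an SSYT $T$ with $\operatorname{std}(T)=P$ is the same datum as a weakly increasing sequence $i_1\le i_2\le\cdots\le i_n$, where $i_k$ is the entry that $T$ places in the cell labeled $k$ by $P$ (equivalently, the $k$-th smallest entry of $T$), and its monomial is $x_{i_1}\cdots x_{i_n}$. The content of the fiber identity is that such a sequence arises from a valid $T$ standardizing to $P$ exactly when $i_k<i_{k+1}$ for every $k\in\Des(P)$, with equality permitted otherwise; summing $x_{i_1}\cdots x_{i_n}$ over precisely these sequences is the definition of $\f_{n,\Des(P)}$. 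Granting this, summing over all $P$ finishes the proof:
\[
s_{\lambda/\mu}=\sum_{T}{\bf x}^T=\sum_{P\in\SYT(\lambda/\mu)}\ \sum_{T:\operatorname{std}(T)=P}{\bf x}^T=\sum_{P\in\SYT(\lambda/\mu)}\f_{n,\Des(P)}.
\]

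The main obstacle is verifying this fiber identity, which amounts to two combinatorial facts, to be checked consistently with the paper's convention that $i\in\Des(T)$ iff $i+1$ lies in a lower row than $i$. Writing $\rho(k)$ for the row index of the cell labeled $k$ in $P$, the first fact is that $\operatorname{std}(T)=P$ forces a strict jump at every $k\in\Des(P)$: when the cell labeled $k+1$ lies strictly below the cell labeled $k$, equal entries there would either violate column-strictness (if the two cells share a column) or contradict the bottom-to-top tie-break, which would have assigned the lower cell the smaller label; hence $i_k<i_{k+1}$. Conversely, given any $i_1\le\cdots\le i_n$ with strict jumps at $\Des(P)$, the filling $T(c)=i_{P(c)}$ is a valid SSYT: rows weakly increase automatically since labels increase along rows, and columns strictly increase because for a vertically adjacent pair with top label $a$ and bottom label $b>a$ one has $\rho(b)=\rho(a)+1>\rho(a)$, which forces some $k\in[a,b-1]$ with $\rho(k+1)>\rho(k)$, i.e. a descent, and therefore a strict jump somewhere between $i_a$ and $i_b$, giving $i_a<i_b$. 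Establishing these two points is the crux; the resulting bijectivity and the final summation are then purely formal.
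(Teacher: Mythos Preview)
The paper does not prove this statement at all: it is quoted as a known result of Gessel, with a citation to \cite[Theorem 7.19.7]{EC2}, and is used as a black box. So there is no ``paper's own proof'' to compare against.

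Your argument is the standard standardization proof that one finds in Stanley's text, and it is correct. The tie-breaking convention you chose (among equal entries, lower rows receive smaller labels, left to right within a row) is exactly the one compatible with the paper's descent convention that $i\in\Des(T)$ iff $i+1$ sits in a strictly lower row than $i$; your check that a block of equal values forces $\rho$ to be weakly decreasing on the corresponding interval of labels, and hence that standardization returns $P$, is the key point and is handled correctly. The one place you leave slightly implicit is the verification that $\operatorname{std}(T)=P$ in the converse direction (you call it ``purely formal''); it is worth one more sentence noting that on each maximal block of equal $i$-values the row function $\rho$ is weakly decreasing, so the bottom-to-top, left-to-right relabeling reproduces the $P$-labels in order. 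With that addition the proof is complete.
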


		Given any subset $\A\subseteq S_n$, define the
		quasisymmetric function
			%\begin{equation}\label{def:SP}
		\[
		\Q(\A) := \sum\limits_{\pi\in \A} \f_{n,\Des(\pi)}.
		\]
		%\end{equation}

		Finding subsets of permutations $\A\subseteq S_n$, for which $\Q(\A)$ is symmetric (Schur-positive), is a  long-standing problem, see~\cite{GR}. 

We write $\lambda\vdash n$ to denote that $\lambda$ is a partition of a positive integer $n$.  %and $|\lambda/\mu|= n$ to denote that $\lambda/\mu$ is a skew shape of size $n$. 
For $D\subseteq [n-1]$ let ${\bf x}^D:=\prod\limits_{i\in D}x_i$.

%The notation $\lambda/\mu\vdash n$ ??? For a skew shape $\lambda/\mu$ of size $n$ .

\begin{lemma}\label{lem:Schur-gf}
%Let $\{c_\lambda\}_{\lmbda\vdash n}$ be a collection of non-negative integers.
%Then 
For every subset $\A\subseteq S_n$ and a family $\{c_\lambda\}_{\lambda \vdash n}$
of coefficients, the equality
\begin{equation}\label{eq:l1}
\Q(\A) =\sum_{\lambda\vdash n} c_{\lambda} s_{\lambda}
\end{equation}
%with finite number of nonzero coefficients, 
is equivalent to the equality
\begin{equation}\label{eq:l2}
\sum\limits_{a\in \A} {\bf x}^{\Des(a)} = \sum_{\lambda\vdash n} c_{\lambda} 
\sum_{T\in \SYT(\lambda)}{\bf x}^{\Des(T)}.
\end{equation}
\end{lemma}

\begin{proof}
By Theorem~\ref{G1}, %completes the proof. 
Equation~\eqref{eq:l1} is equivalent to
\[
\sum\limits_{a\in \A} \f_{n,\Des(a)}= \sum\limits_{\lambda\vdash n} c_{\lambda} \sum\limits_{T\in \SYT(\lambda)}\f_{n,\Des(T)}.
\]
%\todo{This reference to Stanley should be to \cite{Stanley_ECII}!}
Next recall from~\cite[Ch. 7]{EC2} that the fundamental quasisymmetric functions in $x_1,\dots,x_n$
form a basis of the vector space ${\rm{QSym}}_n$ of quasisymmetric functions in $n$ variables.
Finally, apply the vector space isomorphism from ${\rm{QSym}}_n$ 
%the space of quasisymmetric functions in $x_1,\dots,x_n$
to the space of %multilinear 
square-free polynomials in $x_1,\ldots,x_n$, which maps 
$\f_{n,D}$ to ${\bf x}^D$. 
%\todo{insert proof}
\end{proof}

\begin{corollary}\label{cor:Schur-gf}
%Let $\{c_\lambda\}_{\lmbda\vdash n}$ be a collection of non-negative integers.
%Then 
%For every subset $\A\subseteq S_n$ the following equations are equivalent:
For every finite family $S$ of skew shapes of size $n$  and every subset $\A\subseteq S_n$ 
%\begin{equation}\label{eq:l1}
$\Q(\A)= %a finite sum over of skew shapes of size $n$ - 
\sum\limits_{\lambda/\mu\in S} c_{\lambda/\mu} s_{\lambda/\mu}$ 
%\end{equation}
if and only if 
% finite number of nonzero coefficients, 
\[
\sum\limits_{a\in \A} {\bf x}^{\Des(a)}=\sum\limits_{\lambda/\mu\in S} c_{\lambda/\mu} 
\sum\limits_{T\in \SYT(\lambda/\mu)}{\bf x}^{\Des(T)}. 
\]
\end{corollary}

\medskip

%\todo{Where to put the following theorem?}

	Corollary~\ref{cor:Schur-gf} 
	%(which results from Gessel's Theorem~\ref{G1}) and 
	will be combined with the following Theorem~\ref{conj1} to provide criteria for the existence of cyclic descent extensions for Schur-positive sets, see proof of Lemma~\ref{lem:2} and Remark~\ref{rem:distinct} below.  

A {\em ribbon} is a skew shape ``of width $1$'', namely: containing no $2 \times 2$ rectangle.
The following theorem was proved in~\cite{ARR} using Postnikov's toric symmetric functions.
	
\begin{theorem}[{\cite[Theorem 1.1]{ARR}}]\label{conj1}
Let $\lambda/\mu$ be a skew shape with $n$ cells.
The descent map $\Des$ on $\SYT(\lambda/\mu)$ has a cyclic extension $(\cDes,p)$ if and only if $\lambda/\mu$ is not a connected ribbon.
%		Furthermore, for all $J \subseteq [n]$, all such cyclic extensions share the same cardinalities $\card{\cDes^{-1}(J)}$.
\end{theorem}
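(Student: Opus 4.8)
The plan is to transport the problem to symmetric functions. Recall that $\sum_{T\in\SYT(\lambda/\mu)}F_{n,\Des(T)}=s_{\lambda/\mu}$, the skew Schur function, where $F_{n,S}$ is Gessel's fundamental quasi-symmetric function; so a cyclic descent extension of $\Des$ on $\SYT(\lambda/\mu)$ should be seen as a \emph{lift} of this identity to a cyclic/toric setting. For the sufficiency direction I would realize $\SYT(\lambda/\mu)$, \emph{together with} its descent map, as the set of standard tableaux of a cylindric (toric) skew shape of strictly positive winding, in the sense of Postnikov. Such a cylindric shape carries an intrinsic cyclic descent statistic $\cDes$, obtained by reading descents around the cylinder, and a bijection $p$ coming from the rotational symmetry of the cylinder (a cylindric promotion operator); its associated quasi-symmetric generating function $\sum_{T}F_{n,\cDes(T)}$ is a genuine symmetric function, expanding in the Schur basis with coefficients equal to three--point Gromov--Witten invariants of a Grassmannian, hence nonnegative. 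Reading the cylindric shape over one period recovers $\SYT(\lambda/\mu)$ and the identity $\sum_{T}F_{n,\Des(T)}=s_{\lambda/\mu}$, which gives the (extension) axiom; the rotational symmetry of the cylinder gives (equivariance); and strict positivity of the winding is exactly what forces (non-Escher).

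Next I would pin down the combinatorial dichotomy that makes this work precisely for the non--connected--ribbon shapes. A connected ribbon occupying $k$ rows and $m$ columns has, since consecutive rows of a border strip overlap in a single column, exactly $n=k+m-1$ cells; this is the minimum number of cells of a connected skew shape in a $k\times m$ bounding box, and it is precisely the borderline case in which any attempt to wrap the shape around a cylinder forces the ``head'' to collide with the translate of the ``tail'', i.e.\ forces winding $0$. If instead $\lambda/\mu$ is disconnected, a positive winding is easy to arrange (its components can be distributed around the cylinder with room to spare, or one argues directly); and if $\lambda/\mu$ is connected but contains a $2\times2$ block, then $n\ge k+m$, so the shape is ``thick'' enough to wrap nontrivially without overlapping its own translate. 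In each case one must also verify the compatibility claim underlying the reduction: that the standard tableaux of the resulting cylindric shape, restricted to one fundamental domain, reproduce $\SYT(\lambda/\mu)$ with its descent map unchanged, so that the three axioms really do transfer.

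For the necessity direction one must show that a connected ribbon admits \emph{no} cyclic extension whatsoever, not merely that the cylindric construction degenerates. A connected ribbon corresponds to a composition $\alpha\models n$, with $\sum_{T}F_{n,\Des(T)}=r_\alpha$, the ribbon Schur function. Assuming a cyclic extension $(\cDes,p)$ exists, I would extract a contradiction from the bookkeeping imposed by the axioms: (equivariance) forces each $p$--orbit to be carried by $\cDes$ onto a single ${\rm sh}$--orbit of subsets of $[n]$, with every element of that ${\rm sh}$--orbit receiving the same number of tableaux; (non-Escher) rules out the two ${\rm sh}$--fixed subsets $\varnothing$ and $[n]$, so every $p$--orbit has length $\ge 2$ and $p$ is fixed--point free; and (extension) forces $\cDes(T)\in\{\Des(T),\,\Des(T)\cup\{n\}\}$ for each $T$. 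One then has to choose, for each $T$, whether or not to adjoin $n$ so that the resulting values are equidistributed over every ${\rm sh}$--orbit, and the claim is that the descent multiset $\{\!\{\Des(T):T\in\SYT(\lambda/\mu)\}\!\}$ of a connected ribbon is too lopsided to allow this. For the degenerate ribbons --- the single row $(n)$, the single column $(1^n)$, and more generally whenever $\lvert\SYT(\lambda/\mu)\rvert$ is small relative to the available ${\rm sh}$--orbit sizes --- the contradiction is immediate from fixed--point freeness of $p$; the general case requires genuine use of the structure of $r_\alpha$ (equivalently, of the image of the restriction map from cyclic quasi-symmetric functions). I expect the twin hard points to be exactly this global necessity argument and, on the other side, making the cylindric realization precise enough that the dichotomy ``non--connected--ribbon $\Leftrightarrow$ positive winding'' and the axiom transfer are rigorous.
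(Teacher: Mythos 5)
First, for context: the present paper does not prove Theorem~\ref{conj1} at all --- it is quoted from \cite{ARR}, where it is established \emph{non-constructively} via Postnikov's toric Schur functions, with a constructive proof supplied only later by Huang \cite{Huang}. Your outline correctly identifies the toric/cylindric route, but both directions are left as plans with the decisive steps missing. For sufficiency, you propose to read $\cDes$ around the cylinder and to take $p$ to be a ``cylindric promotion operator'' coming from rotational symmetry; neither object is defined, and the compatibility you yourself flag --- that standard fillings of the toric shape, restricted to one period, reproduce $\SYT(\lambda/\mu)$ with its descent map intact --- is exactly where the difficulty lives (the natural correspondence between toric standard fillings and $\SYT(\lambda/\mu)$ is not the identity and must be constructed and verified). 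The proof in \cite{ARR} avoids building $p$ altogether: it combines Postnikov's positivity (Gromov--Witten coefficients) with a criterion of precisely the type of Lemma~\ref{lem:2} and Theorem~\ref{thm:fiber-cDes} in this paper, reducing existence of a cyclic extension to non-negativity of the forced fiber sizes $\langle \Q(\SYT(\lambda/\mu)), \cs_{\ccomp{n}{J}}\rangle$. A genuinely constructive version of your plan is essentially Huang's theorem and required substantial further work; as written, your sufficiency argument is a statement of intent, not a proof.

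For necessity, you reduce the general connected ribbon to the claim that its descent multiset is ``too lopsided'' to admit an equivariant, non-Escher lift, but you only verify this for the degenerate shapes $(n)$ and $(1^n)$; the general case is asserted, and it is the entire content of this direction. A complete argument can again go through the uniqueness of the would-be cyclic refinement (Theorem~\ref{thm:fiber-cDes}): if a cyclic extension existed, the fiber sizes would be forced to equal $\langle r_\alpha, \cs_{\ccomp{n}{J}}\rangle$, and for a connected ribbon these forced values are inconsistent --- already for the single row, each of the $n$ singletons $J=\{j\}$ would receive fiber size $\langle s_{(n)}, h_{(n)}\rangle = 1$, summing to $n$ rather than $|\SYT((n))|=1$; the general ribbon requires an analogous but genuinely nontrivial computation. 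Finally, the dichotomy on which your sufficiency argument rests, ``positive winding if and only if not a connected ribbon,'' is supported only by the cell-count heuristic $n=k+m-1$ versus $n\ge k+m$; this does not by itself handle disconnected shapes (which can also have very few cells) and is not proved for connected non-ribbons. In short, the approach is the right one in spirit, but the two points you identify as ``hard'' are precisely the theorem, and they are not established here.
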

	
A constructive proof was recently given by %Brice 
Huang~\cite{Huang}.

\medskip

Let $\lambda \vdash n$ be a partition of $n$ and let $\psi^\lambda$ be {\em the higher Lie character} indexed by $\lambda$ (see Definition~\ref{def:higher}).
%Denote by $L_\lambda$ the %symmetric function, which is the
%image of $\psi^\lambda$ under the Frobenius characteristic map.

%\todo{This reference to Stanley should be to \cite{Stanley_ECII}!}

The following result is proved in 
%a seminal paper by Gessel and Reutenauer
~\cite{GR}.

	\begin{theorem}\label{thm:GR22}\cite[Proof of Theorem 2.1]{GR}
		For every partition $\lambda$ of $n\ge 1$, 
		\[
		\Q(\C_\lambda) = %L_\lambda = 
		\ch(\psi^\lambda),
		\]
		where $\ch$ is the Frobenius characteristic map.
Equivalently,		
		\[
		\Q(\C_\lambda)=
		\sum\limits_{\mu \vdash n} 
		\langle \psi^\lambda, \chi^\mu  \rangle s_\mu.
		\]
		In particular, $\Q(\C_\lambda)$ is Schur-positive.
	\end{theorem}

%For an explicit description of this symmetric function see
%e.g.~\cite[Exercise 7.89]{EC2}. 

\subsection{Cyclic analogues}
\label{sec:prel2}
		
%\todo{To be deleted ?} 

%\todo{YR: It may be a good idea to formulate here t

The following theorem is 
%an immediate consequence of~\cite[Theorem 3]{Gessel}.
%(essentially ???) 
due to Gessel.
% and prove its cyclic analogue.

For a subset $J=\{j_1  < \ldots < j_t\} \subseteq [n-1]$, the composition (of $n$)
%\begin{equation}%\label{composition-of-set}
\[
\comp{n}{J}:= (j_1,j_2-j_1,j_3-j_2,\ldots,j_t-j_{t-1},n-j_t)
\]
%\end{equation}
defines a {\em connected ribbon} having the entries of $\alpha(J,n)$ as row lengths, from bottom to top. 
%For example, %in the French notation, 
%for $n=7$, the subset $J = \{1,4,5\}\subseteq[6]$ corresponds to
%the shape
%\[
%%\young(:1,:234,:::5,:::678)=
%\young(::\hfil\hfil,::\hfil,\hfil\hfil\hfil,\hfil)
%%(:\hfil,:\hfil\hfil,::\hfil\hfil,:::\hfil,:::\hfil\hfil\hfil)
%.
%\]
%\[
%\young(:1,:234,:::5,:::678)
%=\young(:\hfil,:\hfil\hfil,::\hfil\hfil,:::\hfil,:::\hfil\hfil\hfil)
%.
%\]
Let $s_{\alpha(J,n)}$ be the associated {\em (skew) ribbon Schur function}. 

%The following theorem is an immediate consequence of~\cite[Theorem 3]{Gessel}.

\begin{theorem}\label{thm:fiber-Des}
\cite[an immediate consequence of Theorem 3]{Gessel}
%\cite{Gessel}
%For every Schur-positive set $A$
Let  $\A$ be a finite set, equipped with a descent %set 
map
$\Des: \A \longrightarrow 2^{[n-1]}$. 
If 
\[
\Q(\A):=\sum\limits_{a\in \A} \f_{n,\Des(a)}
\]
is symmetric then
\[
|\{a \in \A:\ \Des(a)=J\}|=\langle s_{\alpha(J,n)}, \Q(\A) \rangle
\qquad (\forall J\subseteq [n-1]).
\]
\end{theorem}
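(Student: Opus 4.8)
\emph{Overview.} The whole statement is an immediate consequence of the following classical fact, call it~$(\star)$: \emph{for every symmetric function $f$ homogeneous of degree $n$, the Hall inner product $\langle s_{\alpha(J,n)},f\rangle$ equals the coefficient of $\f_{n,J}$ in the expansion of $f$ in the basis $\{\f_{n,D}\}_{D\subseteq[n-1]}$ of fundamental quasi-symmetric functions.} Granting~$(\star)$, the theorem follows in one line: by hypothesis $\Q(A)=\sum_{a\in A}\f_{n,\Des(a)}$ is symmetric, and, collecting terms, this very sum is already its expansion in the fundamental basis, so the coefficient of $\f_{n,J}$ in $\Q(A)$ is $|\{a\in A:\ \Des(a)=J\}|$; applying~$(\star)$ with $f=\Q(A)$ then yields $\langle s_{\alpha(J,n)},\Q(A)\rangle=|\{a\in A:\ \Des(a)=J\}|$, which is the claim. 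So the plan is simply to establish~$(\star)$.

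\emph{Reducing $(\star)$.} Both sides of~$(\star)$ are linear in $f$, and the symmetric functions homogeneous of degree $n$ are spanned by $\{s_\lambda:\lambda\vdash n\}$, so it suffices to check~$(\star)$ for $f=s_\lambda$. Writing $d_{\lambda,J}:=|\{T\in\SYT(\lambda):\ \Des(T)=J\}|$, Gessel's Theorem~\ref{G1} gives $s_\lambda=\sum_{J\subseteq[n-1]}d_{\lambda,J}\,\f_{n,J}$, and since the $\f_{n,D}$ are linearly independent the coefficient of $\f_{n,J}$ in $s_\lambda$ is exactly $d_{\lambda,J}$ (this is the mechanism of Lemma~\ref{lem:Schur-gf}). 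Hence~$(\star)$ for $f=s_\lambda$ reads $\langle s_{\alpha(J,n)},s_\lambda\rangle=d_{\lambda,J}$, equivalently, summing over $\lambda$, the ribbon Schur expansion
\[
s_{\alpha(J,n)}=\sum_{\lambda\vdash n}d_{\lambda,J}\,s_\lambda .
\]

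\emph{The obstacle.} Everything above is formal bookkeeping; the single substantive point is this last expansion of the ribbon Schur function in the Schur basis with descent-counting coefficients. It is classical, and I would invoke it directly as the content of Gessel's Theorem~3 in~\cite{Gessel} (see also~\cite[Ch.~7]{EC2}). If a self-contained argument were wanted, two standard routes are available: representation-theoretically, one recognizes $s_{\alpha(J,n)}$ as the Frobenius characteristic of the descent $S_n$-module and computes the multiplicity of $\chi^\lambda$ by inclusion--exclusion over descent sets, using that the number of standard Young tableaux of shape $\lambda$ with descent set contained in $K$ equals the Kostka number $K_{\lambda,\alpha(K,n)}$; alternatively one uses the adjunction between the inclusion of ordinary symmetric functions into quasi-symmetric functions and the forgetful map onto the ribbon basis that is dual to the fundamentals. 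I expect this ribbon Schur expansion to be the only real obstacle, the reduction steps being purely mechanical.
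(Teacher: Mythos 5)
Your proposal is correct and matches the paper's treatment: the paper gives no proof of this statement beyond citing Gessel's Theorem~3, and your argument is precisely the standard unpacking of that citation — reduce by linearity to $f=s_\lambda$ via Theorem~\ref{G1}, and invoke the ribbon Schur expansion $s_{\alpha(J,n)}=\sum_\lambda d_{\lambda,J}s_\lambda$, which is the content of Gessel's result. Nothing further is needed.
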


%%We proceed to cite 
%A cyclic analogue is proved in~\cite{AGRR}.
%%To formulate the cyclic analogue %of  Theorem~\ref{thm:main}  
%Recall that
%	%\begin{equation}\label{ribbon-skew-as-alternating-sum}
%\[
%s_{\alpha(n,J)}
%= \sum_{\varnothing \subseteq I \subseteq J}
%(-1)^{\#(J \setminus I)} h_{\alpha(n,I)},
%\]
%where $h_\alpha$ is the homogeneous symmetric function indexed by $\alpha$.
For a subset $\varnothing \ne J = \{ j_1 < j_2 < \ldots < j_t \}
\subseteq [n]$
define the corresponding {\em cyclic composition} of $n$  as
%\begin{equation}\label{cyclic-composition-of-set}
\[
\ccomp{n}{J} := (j_2-j_1, \ldots, j_t - j_{t-1}, j_1 + n - j_t),
\]
%\end{equation}
with $\ccomp{n}{J} := (n)$ when $J = \{j_1\}$;
note that $\ccomp{n}{\varnothing}$ is not defined.
The corresponding {\em affine (cyclic) ribbon Schur function} was
defined in~\cite{ARR} as
\[
%\begin{equation}\label{affine-ribbon-Schur-function}
\cs_{\ccomp{n}{J}} := \sum_{\varnothing \neq I \subseteq J}
(-1)^{\#(J \setminus I)} h_{\ccomp{n}{I}}.
%\end{equation}
\]

%\todo{To be deleted ? (cont.)\\
\begin{theorem}\label{thm:fiber-cDes}~\cite[Cor. 4.13]{AGRR}
Let $\A$ be a finite set, equipped with a descent %set 
map
$\Des: \A \longrightarrow 2^{[n-1]}$ %, where $n > 1$, 
which has a cyclic  extension. If
\[
\Q(\A):=\sum\limits_{a\in \A} \f_{n,\Des(T)}
\]
is symmetric then the fiber sizes of (any) cyclic descent map satisfy
\[
|\{a \in \A \,:\, \cDes(a) =J\}|=\langle \Q(\A), \cs_{\ccomp{n}{J}}
%\tilde s_{\cc(J)} 
\rangle
\qquad \left( \forall \varnothing \subsetneq J \subsetneq [n] \right).
\]
%For every Schur-positive set $A$, which carries a cyclic descent set extension,
%	all cyclic extensions %of the descent set map $\rm{DES}$ %:\C_\lambda \rightarrow 2^{[n-1]}$ 
%			have fiber sizes given by
%\[
%|\{a \in \A:\ \cDes(a)=J\}|
%=\langle \cs_{\ccomp{n}{J}}, \Q(\A)\rangle \qquad (\forall \varnothing \subsetneq J\subsetneq [n]).
%			\]
\end{theorem}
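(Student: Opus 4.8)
The plan is to compute $\langle \Q(A),\cs_{\ccomp{n}{J}}\rangle$ straight from the definition~\eqref{affine-ribbon-Schur-function} of the affine ribbon Schur function and to reduce everything, by inclusion--exclusion on the Boolean lattice, to a single \emph{containment} count. Expanding and using bilinearity of the (Hall) inner product -- legitimate since $\Q(A)$ is symmetric by hypothesis -- gives
\[
\langle \Q(A),\cs_{\ccomp{n}{J}}\rangle
=\sum_{\varnothing\neq I\subseteq J}(-1)^{\#(J\setminus I)}\,\langle \Q(A),h_{\ccomp{n}{I}}\rangle .
\]
Thus the whole statement will follow from the key identity
\[
\langle \Q(A),h_{\ccomp{n}{I}}\rangle=|\{a\in A:\ \cDes(a)\subseteq I\}|
\qquad(\varnothing\neq I\subseteq[n]),
\]
followed by a Möbius inversion. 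Note that the right-hand side already makes the fiber sizes independent of the particular extension $(\cDes,p)$, so the word ``any'' in the statement will come for free.

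To prove the key identity I would fix $\varnothing\neq I=\{j_1<\cdots<j_t\}\subseteq[n]$ and \emph{cut the cyclic composition at its first element}. Reading $\ccomp{n}{I}=(j_2-j_1,\ldots,j_t-j_{t-1},\,j_1+n-j_t)$ as an ordinary composition, its set of partial sums is exactly $M:={\rm sh}^{-j_1}(I)\cap[n-1]$, so that $\comp{n}{M}=\ccomp{n}{I}$ as compositions and hence $h_{\ccomp{n}{I}}=h_{\comp{n}{M}}$. The identity that drives the non-cyclic Theorem~\ref{thm:fiber-Des} -- namely $\langle \Q(A),h_{\comp{n}{M}}\rangle=|\{a\in A:\ \Des(a)\subseteq M\}|$, which one gets from Lemma~\ref{lem:Schur-gf} together with the standardization description $K_{\lambda,\comp{n}{M}}=|\{T\in\SYT(\lambda):\Des(T)\subseteq M\}|$ of the Kostka numbers -- then yields $\langle \Q(A),h_{\ccomp{n}{I}}\rangle=|\{a\in A:\ \Des(a)\subseteq M\}|$.

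It remains to convert this ordinary containment count into the cyclic one, and here both remaining axioms of Definition~\ref{def:cDes} enter. By equivariance the bijection $p^{j_1}$ satisfies $\cDes(p^{j_1}(b))={\rm sh}^{j_1}(\cDes(b))$, whence
\[
|\{a:\ \cDes(a)\subseteq I\}|=|\{b:\ \cDes(b)\subseteq {\rm sh}^{-j_1}(I)\}| .
\]
Writing $I':={\rm sh}^{-j_1}(I)$, the fact that $j_1\in I$ forces ${\rm sh}^{-j_1}(j_1)\equiv 0\equiv n$, so $n\in I'$ and $M=I'\cap[n-1]$. Because $n\in I'$, the extension axiom $\cDes(b)\cap[n-1]=\Des(b)$ yields the equivalence $\cDes(b)\subseteq I'\Leftrightarrow\Des(b)\subseteq M$ -- the forward direction being intersection with $[n-1]$, the backward direction being free to re-adjoin the index $n\in I'$. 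Hence $|\{b:\cDes(b)\subseteq I'\}|=|\{b:\Des(b)\subseteq M\}|$, and combining with the previous paragraph establishes the key identity.

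Finally, since $|\{a:\cDes(a)\subseteq I\}|=\sum_{\varnothing\neq K\subseteq I}|\{a:\cDes(a)=K\}|$ -- the empty set contributing nothing by the non-Escher axiom -- Möbius inversion over the nonempty subsets of $J$ gives, for every $\varnothing\subsetneq J\subsetneq[n]$,
\[
|\{a:\cDes(a)=J\}|=\sum_{\varnothing\neq I\subseteq J}(-1)^{\#(J\setminus I)}\langle\Q(A),h_{\ccomp{n}{I}}\rangle=\langle\Q(A),\cs_{\ccomp{n}{J}}\rangle ,
\]
and the hypothesis $J\subsetneq[n]$ guarantees that the value at $I=[n]$ is never needed. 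I expect the one genuinely non-formal step -- and the main obstacle -- to be the synchronization achieved by cutting $\ccomp{n}{I}$ at an element $j_1\in I$: this single choice simultaneously realizes $h_{\ccomp{n}{I}}$ as an \emph{ordinary} ribbon (so that Theorem~\ref{thm:fiber-Des} applies) and, through the shift $p^{j_1}$, moves the wrap-around coordinate to position $n$, exactly where the extension axiom collapses cyclic containment into ordinary containment. Everything else is routine symmetric-function bookkeeping and inclusion--exclusion.
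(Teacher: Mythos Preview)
The paper does not give its own proof of this theorem: it is quoted as \cite[Cor.~4.12]{AGRR} and used as a black box, so there is nothing to compare against here. That said, your argument is correct and self-contained. The reduction of $\langle \Q(A),h_{\ccomp{n}{I}}\rangle$ to the ordinary containment count via the shift by $j_1$ is exactly right: the partial sums of $\ccomp{n}{I}$ are indeed $\{j_2-j_1,\ldots,j_t-j_1\}={\rm sh}^{-j_1}(I)\cap[n-1]$, the Kostka identity $\langle \Q(A),h_{\comp{n}{M}}\rangle=|\{a:\Des(a)\subseteq M\}|$ follows from Lemma~\ref{lem:Schur-gf} as you say, and the equivariance/extension axioms close the loop to the cyclic containment count. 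The non-Escher axiom is used precisely where you use it, to discard the empty-set term in the M\"obius inversion. One small cosmetic point: your closing remark that ``$J\subsetneq[n]$ guarantees that the value at $I=[n]$ is never needed'' is true but superfluous, since your key identity holds for \emph{every} nonempty $I\subseteq[n]$, including $I=[n]$.
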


\begin{proposition}\cite[Lemma 2.2]{ARR} If $A\subseteq S_n$ carries a cyclic descent extension, then 
the cyclic descent set generating function is uniquely determined.
\end{proposition}

Furthermore, 

\begin{corollary}\label{cor:cDes-dist-Schur}
%\begin{itemize}
%    \item[1.] \cite[Lemma 2.2]{ARR} With the above notation, if $A$ carries a cyclic descent extension, then
%the cyclic descent set generating function is uniquely determined.
%    \item[2.]  Furthermore, in this case 
Let $\A\subseteq S_n$ be a symmetric set which carries a cyclic descent extension 
and $S$ be a finite set of skew shapes of size $n$ which are not connected ribbons. Then 
for every cyclic descent extension 
the following equations are equivalent:
%holds: 
%where the sum is over skew shapes of size $n$ which are connected ribbons, if and only if
\begin{equation}\label{eq:dist1}
\Q(\A) = \sum\limits_{\lambda/\mu\in S} c_{\lambda/\mu} s_{\lambda/\mu},
\end{equation}
%where the sum is over skew shapes of size $n$ which are connected ribbons, 
%holds if and only if 
and
\begin{equation}\label{eq:dist2}
\sum\limits_{a\in \A} {\bf x}^{\cDes(a)} = \sum\limits_{\lambda/\mu\in S} c_{\lambda/\mu} 
\sum\limits_{T\in \SYT(\lambda/\mu)} {\bf x}^{\cDes(T)}.
\end{equation}
%holds.
%\end{itemize}
\end{corollary}
%}

\begin{proof}
By Theorem~\ref{thm:fiber-cDes} and Theorem~\ref{G1},
if Equation~\eqref{eq:dist1} holds then,
for every $\varnothing \subsetneq J \subsetneq [n]$,
\begin{align*}
%\[
|\{a \in \A \,:\, \cDes(a) = J\}| 
&= \langle \Q(\A), \cs_{\ccomp{n}{J}} \rangle
= \langle \sum\limits_{\lambda/\mu\vdash n} c_{\lambda/\mu} s_{\lambda/\mu}, \cs_{\ccomp{n}{J}} \rangle \\
%=  \sum\limits_{\lambda/\mu\vdash n} c_{\lambda/\mu} \langle s_{\lambda/\mu}, \cs_{\ccomp{n}{J}}\rangle 
%\]
%\[
&= \sum\limits_{\lambda/\mu\vdash n} c_{\lambda/\mu} \langle \Q(\SYT(\lambda/\mu)), \cs_{\ccomp{n}{J}} \rangle \\
&= \sum\limits_{\lambda/\mu\vdash n} c_{\lambda/\mu} |\{T \in \SYT(\lambda/\mu) \,:\, \cDes(T) =J\}|.
%\]
\end{align*}
Thus Equation~\eqref{eq:dist2} holds.

For the opposite direction, let $x_n=1$ in Equation~\eqref{eq:dist2} and apply Corollary~\ref{cor:Schur-gf} to deduce  Equation~\eqref{eq:dist1}.
\end{proof}

Theorem~\ref{thm:GR22} and Theorem~\ref{thm:fiber-Des} imply

%and~\ref{thm:fiber-cDes} together with below imply Theorems~\ref{thm:GR} and~\ref{thm:main22} respectively.

%\medskip

%A 	remarkable theorem of  Gessel-Reutenauer~\cite[Theorem 2.1]{GR}
%, % Theorem~\cite[Theorem 2.1]{GR}, 	which 
%describes the fiber sizes of the descent set map on conjugacy classes as scalar products of character

\begin{theorem}\label{thm:GR}\cite[Theorem 2.1]{GR}
For every %subset $J\subseteq [n-1]$ and 
conjugacy class $\C_\lambda$ of cycle type $\lambda \vdash n$ the
descent set map $\Des$
%${\rm{DES}}$ % : \C_\lambda \longrightarrow 2^{[n-1]}$ 
has fiber sizes given by
\[
|\{\pi \in \C_\lambda :\ %{\rm {DES}}
\Des(\pi)=J\}| = \langle \Q(\C_\lambda),  s_{\alpha(J,n)}\rangle
\qquad (\forall J\subseteq [n-1]).
\]
%where $\langle -,-\rangle$ is the usual inner product on symmetric functions.
\end{theorem}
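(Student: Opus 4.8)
The statement to prove is Theorem~\ref{thm:GR}, which is presented as following from the combination of Theorem~\ref{thm:GR22} (Gessel--Reutenauer, stating $\Q(\C_\mu) = L_\mu = \ch(\psi^\mu)$ and in particular its Schur-positivity) and Theorem~\ref{thm:fiber-Des} (Gessel's fiber-size formula for any finite set carrying a descent map whose quasi-symmetric generating function is symmetric).

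The plan is a direct two-step deduction. First I would invoke Theorem~\ref{thm:GR22} to conclude that the quasi-symmetric function $\Q(\C_\mu) = \sum_{\pi \in \C_\mu} \f_{n,\Des(\pi)}$ equals $L_\mu$, the Frobenius characteristic of the higher Lie character $\psi^\mu$; in particular it is a genuine symmetric function (indeed Schur-positive). Since $\Q(\C_\mu)$ is symmetric, the hypothesis of Theorem~\ref{thm:fiber-Des} is satisfied with $A = \C_\mu$ and its usual descent map $\Des \colon \C_\mu \to 2^{[n-1]}$. Applying that theorem verbatim yields, for every $J \subseteq [n-1]$,
\[
|\{\pi \in \C_\mu :\ \Des(\pi) = J\}| = \langle s_{\alpha(J,n)},\, \Q(\C_\mu) \rangle = \langle s_{\alpha(J,n)},\, L_\mu \rangle,
\]
where in the last equality I substitute $\Q(\C_\mu) = L_\mu$ from Theorem~\ref{thm:GR22}. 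This is exactly the asserted formula.

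There is essentially no obstacle here: both ingredients are already established in the excerpt, so the proof is a one-line chaining of the two displayed identities, together with the observation that ``$\Q(\C_\mu)$ is Schur-positive'' in particular means it is symmetric, which is all Theorem~\ref{thm:fiber-Des} requires. The only point worth a word of care is bookkeeping of conventions: one should check that the indexing of the ribbon Schur function $s_{\alpha(J,n)}$ used in Theorem~\ref{thm:fiber-Des} matches the one in the statement of Theorem~\ref{thm:GR} (it does, by the definition of $\alpha(J,n)$ given just before Theorem~\ref{thm:fiber-Des}), and that the inner product $\langle -, -\rangle$ is the standard Hall inner product on symmetric functions under which the $s_\lambda$ are orthonormal. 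With those conventions fixed, the proof is complete.

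If one wanted to be fully self-contained rather than citing Theorem~\ref{thm:fiber-Des} as a black box, the alternative is to expand $L_\mu = \sum_{\lambda \vdash n} \langle \psi^\mu, \chi^\lambda\rangle s_\lambda$ (the ``equivalently'' form in Theorem~\ref{thm:GR22}), pair with $s_{\alpha(J,n)}$, and use the classical expansion of a ribbon Schur function in the Schur basis together with Gessel's theorem (Theorem~\ref{G1}) relating $\f_{n,\Des(T)}$-sums over $\SYT(\lambda)$ to $s_\lambda$; but since Theorem~\ref{thm:fiber-Des} already packages precisely this argument, I would simply cite it. Thus the write-up is short and the substantive content lies entirely in the two theorems being combined.
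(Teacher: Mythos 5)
Your proof is correct and follows exactly the paper's route: the paper derives Theorem~\ref{thm:GR} precisely by combining Theorem~\ref{thm:GR22} (which gives $\Q(\C_\mu)=L_\mu$, hence symmetric) with Theorem~\ref{thm:fiber-Des}. Nothing is missing.
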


The following cyclic analogue of Theorem~\ref{thm:GR} results from Theorem~\ref{thm:GR22} and Theorem~\ref{thm:fiber-cDes}.
%~\cite[Cor. 4.12]{AGRR}.
%~\cite[Lemma 4.9]{AGRR}.
%????????
%the proof of  Theorem~\ref{thm:main}. 

%\medskip

%\end{equation}

%By analogy, f

%The following cyclic analogue of Theorem~\ref{thm:GR} results from the proof of 
%Theorem~\ref{thm:main}.
	
		\begin{theorem}\label{thm:main22}
		For every conjugacy class $\C_\lambda$, which carries a cyclic descent set extension,
%		with cycle type $\lambda \ne (r^s)$ for some square-free $r$, 
%		the following holds.
			%For all $J \subseteq [n]$, a
%			A
			all cyclic extensions of the descent set map $\Des$ 
			%$\rm{DES}$ %:\C_\lambda \rightarrow 2^{[n-1]}$ 
			have fiber sizes given by
			\[
			|\{\pi\in \C_\lambda :\ \cDes(\pi)=J\}|=\langle \Q(\C_\lambda), \cs_{\ccomp{n}{J}} \rangle \qquad (\forall \varnothing \subsetneq J\subsetneq [n]).
			\]
	\end{theorem}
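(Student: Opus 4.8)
The plan is to derive Theorem~\ref{thm:main22} by combining the two already-established facts in the excerpt: the Gessel--Reutenauer identity $\Q(\C_\mu) = L_\mu$ (Theorem~\ref{thm:GR22}), which in particular asserts that $\Q(\C_\mu)$ is symmetric, and the abstract cyclic fiber formula of Theorem~\ref{thm:fiber-cDes}, which computes the cyclic fiber sizes of \emph{any} cyclic descent extension on a set whose quasi-symmetric generating function is symmetric. So the argument is essentially a substitution.

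\begin{proof}
Fix a conjugacy class $\C_\mu$ of cycle type $\mu \vdash n$, and suppose it carries a cyclic descent extension $(\cDes, p)$ in the sense of Definition~\ref{def:cDes}, where we regard $\C_\mu$ as equipped with the usual descent map $\Des$. By Theorem~\ref{thm:GR22} we have
\[
\Q(\C_\mu) = \sum_{a \in \C_\mu} \f_{n,\Des(a)} = L_\mu = \ch(\psi^\mu),
\]
which is a symmetric function. Hence the set $A := \C_\mu$ satisfies the hypotheses of Theorem~\ref{thm:fiber-cDes}: it is finite, equipped with a descent map, carries a cyclic extension, and its quasi-symmetric generating function $\Q(A)$ is symmetric. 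Applying Theorem~\ref{thm:fiber-cDes} to $A = \C_\mu$ gives, for every $\varnothing \subsetneq J \subsetneq [n]$,
\[
|\{\pi \in \C_\mu \,:\, \cDes(\pi) = J\}| = \langle \Q(\C_\mu), \cs_{\ccomp{n}{J}} \rangle = \langle L_\mu, \cs_{\ccomp{n}{J}} \rangle = \langle \cs_{\ccomp{n}{J}}, L_\mu \rangle,
\]
using the symmetry of the inner product in the last step. This is precisely the claimed formula. Since the right-hand side does not depend on the choice of cyclic extension $(\cDes, p)$, the fiber sizes agree for all cyclic extensions, as asserted.
\end{proof}

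There is essentially no obstacle here: the theorem is a formal corollary, and the statement is even flagged in the excerpt as resulting from Theorem~\ref{thm:GR22} together with Theorem~\ref{thm:fiber-cDes}. The only points that need a word of care are (i) checking that $\Q(\C_\mu)$ being \emph{symmetric} --- not merely Schur-positive --- is what Theorem~\ref{thm:fiber-cDes} requires, and this is immediate since $L_\mu = \ch(\psi^\mu)$ is by construction a genuine symmetric function; and (ii) noting that the independence of the fiber sizes from the particular cyclic extension is automatic, because the formula $\langle \cs_{\ccomp{n}{J}}, L_\mu \rangle$ makes no reference to $(\cDes,p)$ --- indeed this independence is already recorded in Corollary~\ref{cor:cDes-dist-Schur}. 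One could alternatively phrase the proof as an instance of Corollary~\ref{cor:cDes-dist-Schur} combined with Theorem~\ref{thm:GR22}, expanding $\Q(\C_\mu) = \sum_\lambda \langle \psi^\mu, \chi^\lambda\rangle s_\lambda$ and pairing against $\cs_{\ccomp{n}{J}}$, but the direct route through Theorem~\ref{thm:fiber-cDes} is shortest.
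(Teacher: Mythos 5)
Your proof is correct and follows exactly the route the paper itself indicates: substituting the Gessel--Reutenauer identity $\Q(\C_\mu)=L_\mu$ from Theorem~\ref{thm:GR22} into the abstract cyclic fiber formula of Theorem~\ref{thm:fiber-cDes}. This is precisely the paper's own (one-line) justification, so there is nothing to add.
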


%%%%%%%%%%%%%%%%%%%%%%%%%%%%%%%%%%%%%%%%%%%%%%%%%
\section{The role of hooks}\label{sec:hooks}%\ \\

\subsection{Hooks and near-hooks}\label{sec:near_hooks}

It turns out that hooks and near-hooks play a crucial role in the 
study of cyclic descent extensions.

A {\em hook} is a partition with at most one part larger than $1$. Explicitly, it has the form $(n-k,1^k)$ for some $0 \le k \le n-1$. 
A {\em near-hook} of size $n$ is a hook of size $n+1$ with its (northwestern) corner cell removed; see~\cite{AER} for a somewhat more inclusive definition of this notion. 
Equivalently, recall the {\em direct sum} operation on shapes (partitions), denoted $\lambda \oplus \mu$, %in the last summation denotes 
yielding a skew shape having the diagram of $\lambda$ strictly southwest of the diagram of $\mu$, with no rows or columns in common.  
A near-hook of size $n$ is the direct sum of a one-column partition $(1^k)$ and a one-row partition $(n-k)$, for some $0 \le k \le n$. 
For example, 
%when $(n,k)=(7,2)$, 
\[
%(1^k)\oplus (n-k) \quad = \quad 
(1^2) \oplus (5)
\quad = \quad
%\ydiagram{2+3,1,1,1,1,1} \oplus \ydiagram{3}
\young(\hfil,\hfil)
\,\,\, \oplus \,\,\,
\young(\hfil\hfil\hfil\hfil\hfil)
\quad = \quad
\young(:\hfil\hfil\hfil\hfil\hfil,\hfil,\hfil)
\]

Given an $S_n$-character $\phi$, let  
\begin{equation}\label{e_k_interpretation1}
%\[
m_{k,\phi}
:= \langle \phi,\chi^{(n-k,1^k)} \rangle 
\quad (0 \le k \le n-1)
\quad \text{and}\quad	
e_{k,\phi} 
:=\langle \phi,\chi^{(1^k)\oplus(n-k)} \rangle
\quad (0 \le k \le n).
\end{equation}
%\]
When $\phi$ is understood from the context, we use the abbreviated notations $m_k:=m_{k,\phi}$ and $e_k:=e_{k,\phi}$.

%	where $\psi:=\psi^{(r^s)}$.
		%	Let $\pi$ denote the permutation character of $S_n$ in its natural action on the set $[n]$. % In other words $\pi^k=1_{S_k\times %S_{n-k}}\uparrow^{S_n}$.
	%The following alternating sum description of $\chi^k$ is relatively standard, see Lemmas~$3.2$ and $3.3$ of \cite{Giannelli}:
	%\[\chi^k=\Lambda^k\chi^1=(-1)^k\sum_{i=0}^k(-1)^i\Lambda^i\pi=(-1)^k\sum_{i=0}^k(-1)^i(\varepsilon_{S_i}\times 1_{S_{n-i}})\uparrow^{S_n}.\]
By Pieri's rule~\cite[Theorem 7.5.17]{EC2} 
%(see Equation~\eqref{eq:P} below) 
combined with the (inverse) Frobenius characteristic map,
%	or alternatively by~\cite[Lemmas 3.2 and 3.3]{Giannelli},
\begin{align*}
\chi^{(1^k)\oplus (n-k)} 
= \chi^{(1^k)} \chi^{(n-k)}
&= \ch^{-1}(s_{(1^k)}s_{(n-k)})
= \ch^{-1}(s_{(n-k+1,1^{k-1})} + s_{(n-k,1^k)})\\
&= \chi^{(n-k+1,1^{k-1})} + \chi^{(n-k,1^k)} 
\qquad (0<k<n).
\end{align*}
Equivalently,
\[
\chi^{(n-k,1^k)}=
\sum_{i=0}^{k} (-1)^{k-i}\chi^{(1^i)\oplus (n-i)} \qquad (0\le k \le n-1).
\]
Thus the sequences $\{m_k\}_{k=0}^{n-1}$ and $\{e_k\}_{k=0}^{n}$ 
% $m_k=\langle\psi,\chi^k\rangle%=(\phi\uparrow^{S_n},\chi^k)$ and 
%	\begin{equation}\label{e_k_interpretation}
%	e_i=\langle\psi,(\varepsilon_{S_i}\times 1_{S_{n-i}})\uparrow^{S_n}\rangle.
%	\end{equation}
determine each other via the relations
\begin{equation}\label{eq:alternating}
e_k = m_k + m_{k-1}
\quad \text{and} \quad
m_k = \sum_{i=0}^k (-1)^{k-i} e_i
\qquad (0 \le k \le n),
\end{equation}
where $m_k := 0$ for $k = -1$ and $k = n$.
Note that, in particular,
\[
\sum_{i=0}^{n} (-1)^{n-i} e_i = 0.
\]

\subsection{Cyclic descent extension and hook-multiplicities}\label{sec:hook_criterion}
	
Let $\A\subseteq S_n$ be Schur-positive with $\Q(\A) = \ch(\phi)$.
%Since $A$ is Schur-positive, %Theorem~\ref{thm:Sp-combin} implies that
%there exist non-negative integers $(m_{\lambda,A})_{\lambda\vdash n}$ such that
Denote
\[
m_\lambda 
:= \langle \phi, \chi^\lambda \rangle 
= \langle \Q(\A), s_\lambda \rangle
\qquad (\forall \lambda \vdash n).
\]
For $\lambda = (n-k,1^k)$ we use the abbreviation 
\[
m_k := m_{(n-k,1^k)} 
\qquad (0 \le k < n).
\]

The {\em hook-multiplicity generating function} is defined as 
\[
M_{\A}(x):=\sum_{k=0}^{n-1} m_k x^k.
\]

%\begin{remark}
The function $M_{\A}(x)$, where $\A$ %$=\C_\lambda$ 
is a conjugacy class, % of cycle type $\lambda$, 
was studied and applied to the enumeration of unimodal permutations with a given cycle type by Thibon~\cite{Thibon}. Indeed, 
%\end{remark}

%We claim that
\begin{observation}\label{lem:1}
For every $0\le k <n$
\[
%	m_{(n-k,1^k)}
m_k = \card{\{a\in \A:\ \Des(a)=[k]\}}.
\]
\end{observation}

\begin{proof}
For every $0 \le k < n$ there exists 
a unique standard Young tableau $T$ of size $n$ with
$\Des(T) = [k]$ (where $[0] := \varnothing$). 
The shape of $T$ is $(n-k,1^k)$.
Comparing the coefficients of ${\bf x}^{[k]}$ on both sides of Equation~\eqref{eq:l2} completes the proof.
%\todo{RA:
%There is a problem with using Equation~\eqref{eq:l2}, since it deals with skew shapes and there the shape is not unique.
%}
\end{proof}

%The following consequence is very useful.
We now restate and prove Lemma~\ref{lem:21}.

\begin{lemma}\label{lem:2}
%With the above notation, a 
A Schur-positive set $\A \subseteq S_n$ carries a cyclic descent extension if and only if 
the hook-multiplicity generating function $M_\A(x)$ %$= \sum_{k=0}^{n-1} m_k x^k$ 
is divisible by $1+x$
and the quotient $M_\A(x)/(1+x)$ has non-negative coefficients; 
%the following two conditions hold:
%\begin{equation}\label{eq:111}
%\sum\limits_{k=0}^{n-1} m_k x^k = (1+x)\sum\limits_{k=0}^{n-2} d_k x^k %\qquad (\forall k\ d_k\ge 0).
%\end{equation}
%the polynomial $m(x) := \sum_{k=0}^{n-1} m_k x^k$ is divisible by $1+x$ and the quotient
%\[
%d(x) = \sum_{k=0}^{n-2} d)k x^k := m(x)/(1+x)
%\]
%has non-negative coefficients.
%with
%\begin{equation}\label{eq:222}
%d_k\ge 0\qquad  (\forall k).
%\end{equation}
equivalently, if and only if
there exist non-negative integers $d_k$ $(0 \le k \le n-2)$ such that
\[
m_k = d_k + d_{k-1} \qquad (0 \le k \le n-1),
\]
where $d_k := 0$ for $k = -1$ and $k = n-1$.
%; equivalently, if and only if the polynomial $M(x) = \sum_{k=0}^{n-1} m_k x^k$ is divisible by $1+x$	and the quotient $M(x)/(1+x)$ has non-negative coefficients.
\end{lemma}
	
\begin{proof}
%Next by Observation~\ref{lem:1},
%First assume that there is 
If $\A$ carries
a cyclic descent extension
then, by Observation~\ref{lem:1} and the equivariance of $\cDes$, %imply that,
for every $0 \le k \le n-1$:
\begin{align*}
m_k
&= \card{\{a\in \A:\ \Des(a)=[k]\}} \\
&= \card{\{a\in \A:\ \cDes(a)=[k]\}}+\card{\{a\in \A:\ \cDes(a)=[k]\sqcup\{n\}\}} \\
&= \card{\{a\in \A:\ \cDes(a)=[k]\}}+\card{\{a\in \A:\ \cDes(a)=[k+1]\}}.
\end{align*}
The numbers
\[
d_k := \card{\{a\in \A:\ \cDes(a)=[k+1]\}}
\qquad (-1 \le k \le n-1)
\]
%shows that \eqref{eq:111} and \eqref{eq:222} hold.
satisfy the required conditions,
which imply the corresponding properties of $M_\A(x)$.

For the opposite direction, assume that
%\eqref{eq:111} and \eqref{eq:222} hold. 
there exist non-negative integers $d_k$ %$(0 \le k \le n-2)$, 
(with $d_{-1} = d_{n-1} = 0$)
such that $m_k=d_{k-1}+d_k$ for all $0\le k\le n-1$. 
%as above.
By Pieri's rule~\cite[Theorem 7.15.7]{EC2}, 
\[
%\begin{equation}\label{eq:P}
s_{(1^k) \oplus (n-k)}=s_{(1^k)} s_{(n-k)}=s_{(n-k+1,1^{k-1})}+s_{(n-k,1^k)}
\qquad (1 \le k \le n-1).
%\end{equation}
\]
Hence
\[
\sum_{k=1}^{n-1} d_{k-1} s_{(1^k)\oplus (n-k)}=
\sum_{k=1}^{n-1} d_{k-1} (s_{(n-k+1,1^{k-1})}+s_{(n-k,1^k)})
=\sum_{k=0}^{n-1} m_k s_{(n-k,1^k)}. 
\]
Since
\[
\Q(\A) 
= \sum_{\lambda \vdash n} \langle \Q(\A), s_\lambda \rangle s_\lambda 
= \sum_{\substack{\lambda\vdash n \\ \lambda \ \text{non-hook}}} m_\lambda s_\lambda +
\sum_{k=0}^{n-1} m_k s_{(n-k,1^k)}
\]
we obtain
\begin{equation}\label{eq:q}
\Q(\A) 
= \sum_{\substack{\lambda\vdash n \\ \lambda \ \text{non-hook}}} m_\lambda s_\lambda +
\sum_{k=1}^{n-1} d_{k-1} s_{(1^k)\oplus (n-k)}.
\end{equation}
By Corollary~\ref{cor:Schur-gf}, this is equivalent to
\[
\sum_{a\in \A} {\bf x}^{\Des(a)} = \sum_{\substack{\lambda\vdash n \\ \lambda\ \text{non-hook}}} m_\lambda 
\sum_{T\in \SYT(\lambda)} {\bf x}^{\Des(T)} + 	\sum_{k=1}^{n-1} d_{k-1}
\sum_{T \in \SYT((1^k)\oplus (n-k))} {\bf x}^{\Des(T)}.
\]
By	Theorem~\ref{conj1},  
%for every  non-hook straight shape $\lambda$ 
the set $\SYT(\lambda)$ carries a cyclic descent extension if and only if $\lambda \vdash n$ is not a hook; 
and each of the sets $\SYT((1^k)\oplus (n-k))$ $(1 \le k \le n-1)$ carries a cyclic descent extension. 
Hence $\A$ also carries a cyclic descent extension, completing the proof. % of Part 1. 

\end{proof}

\begin{corollary}
If a Schur-positive set $\A$ carries a cyclic descent extension then %with the above notation
\begin{equation}\label{eq:main2}
\sum_{a\in \A} {\bf x}^{\cDes(a)} = \sum_{\substack{\lambda\vdash n \\ \lambda\ \text{non-hook}}} m_\lambda 
\sum_{T\in \SYT(\lambda)} {\bf x}^{\cDes(T)} + 	\sum_{k=1}^{n-1} d_{k-1}
\sum_{T\in \SYT((1^k)\oplus (n-k))}{\bf x}^{\cDes(T)},
\end{equation}
where $m_\lambda$ and $d_k$ are the non-negative integers defined above.
\end{corollary}

\begin{proof}
    %\todo{To be deleted ?:	Furthermore, 
    By Corollary~\ref{cor:cDes-dist-Schur}
	%Theorem~\ref{thm:fiber-cDes},	the corresponding cyclic descent set generating function is uniquely determined.Combining Theorem~\ref{thm:fiber-cDes} 
	together with Equation~\eqref{eq:q}, 
	%and Lemma~\ref{lem:Schur-gf}, one can verify 
	the generating function for the corresponding cyclic descent set is uniquely determined
	and satisfies Equation~\eqref{eq:main2}. 
		%$\sum\limits_{T\in \SYT(\lambda)}{\bf x}^{\cDes(T)}$ and $\sum\limits_{T\in \SYT((1^k)\oplus (n-k))}{\bf x}^{\cDes(T)}$ are uniquely determined
%			completing the proof. % of Part 2.
	\end{proof}

\subsection{Divisibility of the hook-multiplicity generating  function}
\label{sec:divisibility}	
	
%	For a Schur-positive set $\A\subseteq S_n$ 
%	 there exists a non-virtual $S_n$-representation $\phi$,
%	such that
%	\[
%	\ch (\phi)=\sum\limits_{a\in \A} \f_{n,\Des(a)},
%	\]
%where $\ch$	the Frobenius characteristic map.
Recall the notation %for $0\le k<n$ %recall the notation
\[
m_{k,\phi} :=
%\langle \Q(\A), s_{(n-k,1^k)}\rangle=
\langle \phi,\chi^{(n-k,1^k)} \rangle 
\qquad (0 \le k < n)
\]
for an $S_n$-character $\phi$.

\begin{lemma}\label{lem:div}
    For every $S_n$-character $\phi$, the hook-multiplicity generating function 
    \[
    M_\phi(x) 
    := \sum\limits_{k=0}^{n-1} m_{k,\phi} x^k
    \]
    is divisible by $1+x$ if and only if 
    the value of $\phi$ on an $n$-cycle is zero, i.e.,  $\phi_{(n)}=0$. 
\end{lemma}

\begin{proof}
    By~\cite[Lemma 4.10.3]{Sagan},  
    %the Murnaghan-Nakayama rule [reference???], 
    %     the following character evaluation of the irreducible characters at  $n$-cycles holds. For 
    for every partition $\lambda\vdash n$ 
    %be a partition of $n$. 
    \[
    \chi^\lambda_{(n)} =
    \begin{cases} 
        (-1)^k,&\text{if } \lambda = (n-k,1^k) \text{ for some } 0 \le k < n;\\
		0,&\text{otherwise.}
		\end{cases}
    \]
    Thus   
    \[
    \phi_{(n)}
    = \sum\limits_{\lambda\vdash n} \langle \phi,\chi^\lambda \rangle \chi^\lambda_{(n)}
    = \sum\limits_{k=0}^{n-1} \langle \phi,\chi^{(n-k,1^k)} \rangle \chi^{(n-k,1^k)}_{(n)}
    = \sum\limits_{k=0}^{n-1} m_{k,\phi} \cdot (-1)^k
    = M_\phi(-1),
    \]
    which equals zero if and only if $1+x$ divides $ M_\phi(x)$, 
    completing the proof.
\end{proof}

Letting $\phi = \psi^\lambda$, the higher Lie character indexed by a partition $\lambda$, reduces Proposition~\ref{prop:divisibility} to the following character evaluation. 

%\todo{Is Prop.~\ref{lem:char_eval_cycles} known?
%Is the case $\mu=(n)$ known?}

Recall the M\"obius function $\mu(n)$, the sum of all primitive (complex) $n$-th roots of $1$. If $n$ has a prime square divisor then $\mu(n)=0$; otherwise, $n$ is a product of $k$ distinct primes and $\mu(n)=(-1)^k$. 
The following lemma is equivalent to a combinatorial identity due to Garsia, as shown in  Proposition~\ref{prop:equiv_Garsia} below. We give here an independent direct algebraic proof.

\begin{lemma}\label{lem:char_eval_cycles}
For $\lambda \vdash n$
%\begin{equation}\label{eq:char_eval_cycles}
\[
    \psi^\lambda_{(n)} =
    \begin{cases} 
        \mu(r), &\text{if } \lambda = (r^s);\\
	    0, &\text{otherwise,}\\
    \end{cases}
%\end{equation}
\]
where $\mu$ is the M\"obius function.
\end{lemma}

\begin{proof}
%\todo{Old version:\\
%%To evaluate $\psi^\mu_{(n)}$, f
%First note that a  natural basis of the representation space of $\psi^\mu$ is indexed by the permutations in $\C_\mu$. Let $c$ be an $n$-cycle in $S_n$. Then  
%\[
%\psi^\mu_{(n)}=\psi^\mu(c)=\sum\limits_{\pi\in \C_\mu} \psi^\mu(c)(\pi)|_\pi,
%\]
%where $\psi^\mu(c)(\pi)|_\pi$ denotes the coefficient of $\pi$
%in the expansion of $\psi^\mu(c)$ in the natural basis.
%Notice that since $\psi^\mu$ is induced from one-dimensional
%character of the centralizer $Z_\mu$ it follows that 
%%the permutations in $\C_\mu$ form a natural basis
%%of representation space of $\psi^\mu$, while 
%\[
%\psi^\mu(c)(\pi)= \zeta^k c\pi c^{-1} \qquad (\forall \pi\in \C_\mu),
%\]
%for a primitive $n$-th root of unity $\zeta$   
%and integer $k$.
%%we deduce that $\psi^\mu(c)=0$ if $\C_\mu$ is not contained in the %%centralizer of $c$,
%%namely when $\mu$ is not of the form $r^s$.
%If $\mu\ne (r^s)$ for some $r$ and $s$,  then $ c\pi c^{-1}\ne \pi$ for every $\pi\in \C_\mu$. 
%it follows that  $\psi^\mu_{(n)}=0$ for such partitions.
%
%If $\mu = (r^s)$ then we use the formula for the induced character~\cite[(5.1)]{Isaacs} etc.}
Let $c$ be an $n$-cycle in $S_n$, and let  $Z_\lambda = Z_{S_n}(g)$ be the centralizer in $S_n$ of a specific element $g\in \C_\lambda$. 
An explicit formula for the induced character~\cite[(5.1)]{Isaacs} is
\[\psi^\lambda(c)=\omega^\lambda\uparrow_{Z_\lambda}^{S_n}(c)=\frac{1}{|Z_\lambda|}\sum_{\substack{x\in S_n\\x^{-1}cx\in Z_\lambda}}\omega^\lambda(x^{-1}cx).
\]
An $n$-cycle commutes only with its own powers. Thus, if $\lambda$ is {\em not} of the form $(r^s)$ for some $r$ and $s$, 
then there is no $n$-cycle in $Z_\lambda$;
equivalently, $x^{-1}cx \not\in Z_\lambda$ for every $x\in S_n$. 
It follows that, for such partitions $\lambda \vdash n$, $\psi^\lambda_{(n)}=0$.

%\todo{RA: Fix the proof in the following paragraph.
%}
Assume now that $\lambda= (r^s)$, and let let $g = g_1 g_2 \cdots g_s \in \C_\lambda$ be a fixed  product of $s$ disjoint $r$-cycles. 
The order of the centralizer 
%$Z_\lambda= C_G(g)$ 
$Z_\lambda = Z_{S_n}(g)$ is $s!r^s$. 
If $u \in Z_\lambda$ is an $n$-cycle ($n = rs$) then 
$g = u^k$ for some integer $k$ with $\gcd(k,n) = s$;
equivalently, $u^s = g^j$ for some $0 < j < r$ with $\gcd(j,r) = 1$.
Conversely, if $u \in S_n$ is an $n$-cycle satisfying 
$u^s = g^j$ for some $0 < j < r$ with $\gcd(j,r) = 1$, then $g$ is a power of $u$ and therefore $u \in Z_\lambda$.
Thus the number of $n$-cycles in $Z_\lambda$, namely the number of elements of $Z_\lambda \cap \C_{(n)}$, is $\varphi(r) (s-1)! r^{s-1}$, where $\varphi$ is Euler's totient function. 
%Denoting by $A_i$ the support of the $r$-cycle $g_i$ $(1 \le i \le s)$, so that $[n] = A_1 \cup \ldots \cup A_s$ is a disjoint union, the $n$-cycle $u$ is uniquely described by an $s$-cycle $\pi \in S_s$ and a sequence $(j_1, \ldots, j_s) \in \ZZ_r^s$, such that $u(A_i) = A_{\pi(i)}$ $(\forall i)$ 

Viewing $Z_\lambda$ as the group of $s \times s$ monomial (``generalized permutation'') matrices whose nonzero entries are complex $r$-th roots of unity, an element $u \in Z_\lambda \cap \C_{(n)}$ corresponds to a matrix whose underlying permutation is a full $s$-cycle and the product of its nonzero entries is a primitive $r$-th root of unity. %It follows that
This product is equal to $\omega^\lambda(u)$, so it is a primitive $r$-th root of unity.
For $u, v \in Z_\lambda \cap \C_{(n)}$, write $u \sim v$ if $v = u^i$ for some integer $i$ (necessarily coprime to $n$). This clearly defines an equivalence relation on $Z_\lambda \cap \C_{(n)}$. On each equivalence class, all primitive $r$-th roots of unity appear with the same frequency as values of $\omega^\lambda$. This property thus holds for all of $Z_\lambda \cap \C_{(n)}$, where this frequency is $(s-1)!r^{s-1}$. Denoting by $\xi$ any specific primitive $r$-th root of unity, the sum of all values of $\omega^\lambda$ on $Z_\lambda \cap \C_{(n)}$ is therefore
\[
\sum_{u\in Z_\lambda\cap\C_{(n)}} \omega^\lambda(u)
= (s-1)! r^{s-1} \sum_{j:(j,r)=1} \xi^j
= (s-1)! r^{s-1} \mu(r).
\]
Given any $c,u \in \C_{(n)}$, there are exactly $n=rs$ permutations $x \in S_n$ which satisfy $u=x^{-1}cx$.
%Since $\omega^\mu(x^{-1}cx)=\xi^j$ {\bf[Explain why]}, $(j,r)=1$ with $j$ evenly distributed 
%\todo{To be clarified ??}
%and $\sum_{(j,r)=1}\xi^j=\mu(r)$,
Thus
\begin{align*}
\psi^\lambda(c)
&= \omega^\lambda\uparrow_{Z_\lambda}^{S_n}(c)
= \frac{1}{|Z_\lambda|}\sum_{\substack{x \in S_n \\ x^{-1}cx \in Z_\lambda}} \omega^\lambda(x^{-1}cx)
= \frac{n}{|Z_\lambda|}\sum_{u \in Z_\lambda\cap\C_{(n)}} \omega^\lambda(u) \\
&= \frac{n}{s!r^s} (s-1)! r^{s-1} \mu(r)
= \mu(r).
\end{align*}
%Altogether, we deduce that for $\lambda\vdash n$
%\[
%\psi^\lambda_{(n)}=\begin{cases} \mu(r),&\text{ if }  \lambda= (r^s);\\
%		0,&\text{ otherwise },\\
%		\end{cases}
%\]
%where $\mu(r)$ is the M\"obius function, 
%Proof is completed.

\end{proof}

\begin{proof}[Proof of Proposition~\ref{prop:divisibility}.]
	By Lemma~\ref{lem:div}, % together with Theorem~\ref{thm:GR22},
	$1+x$ divides the hook-multiplicity generating function of the higher Lie character $\psi^\lambda$ 
	%$\ch^{-1}(\Q(\C_\lambda))=\psi^\lambda$
    if and only if $\psi^\lambda_{(n)}=0$.
    Lemma~\ref{lem:char_eval_cycles} completes the proof.
\end{proof}

We deduce

\begin{corollary}\label{cor:only if}
Let $\lambda \vdash n$.
\begin{itemize}
\item[1.]    
If $\lambda = (r^s)$ for some square-free integer $r$ and positive integer $s$, then $1+x$ does not divide the hook-multiplicity generating function $M_\lambda(x)$, and the descent set map on the conjugacy class $\C_\lambda$ does not have a cyclic extension.
\item[2.] 
If $\lambda$ is not equal to $(r^s)$ for any square-free $r$, then $1+x$ divides %the hook-multiplicity generating function 
$M_\lambda(x)$. 
In this case, the descent set map on %the conjugacy class 
$\C_\lambda$ has a cyclic extension if and only if 
the quotient $M_\lambda(x)/(1+x)$ has non-negative coefficients.
\end{itemize}
\end{corollary}

\begin{proof}
%    Letting $\A=\C_\lambda\subset S_n$ - the conjugacy class of cycle type $\lambda$, 
By the Gessel-Reutenauer theorem (Theorem~\ref{thm:GR22}), for every $\lambda\vdash n$ the conjugacy class $\C_\lambda$ %of cycle type $\lambda$ 
is Schur-positive, with $\Q(\C_\lambda) = \ch(\psi^\lambda)$.
%, where $\C_\lambda$ is  and $\ch$ is the Frobenius characteristic map.
Combining this with Lemma~\ref{lem:2}  and  Proposition~\ref{prop:divisibility} completes the proof of both parts.
\end{proof}

In the following sections we will prove the non-negativity of the coefficients of the quotient $M_\lambda(x)/(1+x)$ for partitions (cycle types) which are not equal to $(r^s)$ for a square-free $r$: 
cycle types with more than one cycle length will be considered in Section~\ref{sec:distinct}, 
%first using a different more combinatorial approach.  
$n$-cycles will be considered in Section~\ref{sec:cycles},
and cycle types $\lambda = (r^s)$ with non square-free $r$ and $s>1$ will be considered in Section~\ref{sec:Witt} (this is the most difficult case).

%\todo{Should the following appear in the paper? Where?
%\begin{proof}[Proof of Proposition~\ref{t:main_hook-alternating1}.]
 %%follows from 
 %An immediate consequence of 
 %Proposition~\ref{prop:divisibility}.    
%{\bf To be explained ??}
%\end{proof}
%}

%\section{Hook constituents in the higher Lie character}\label{sec:Witt}
%\section{Proof of Theorem~\ref{thm:main}}
%\label{sec:proof_main}

\section{Non-negativity: the case of more than one cycle length}
\label{sec:distinct}
%%%%%%%%%%%%%%%%%%%%%%%%%%%%%%%%%%%%%%%%%%%%%%%%%%%%%%%%%%%%%%%%%%%%	
Consider, first, the case of a conjugacy class with more than one cycle length. This is the easiest case to handle.

\begin{proof}[Proof of Lemma~\ref{lem:distinct}.]
The centralizer $Z_\lambda$ of a permutation in $\C_\lambda$ is isomorphic, in this case, to the direct product $Z_\mu \times Z_\nu$. 
By Definition~\ref{def:higher}, 
$\omega^\lambda := \omega^{\mu} \otimes \omega^\nu$ and  
\begin{equation}\label{eq:distinct}
\psi^\lambda 
= \omega^\lambda\uparrow_{Z_\lambda}^{S_n}
= \left(\omega^{\mu}\uparrow_{Z_{\mu}}^{S_{|\mu|}} \otimes \, 
		\omega^{\nu}\uparrow_{Z_{\nu}}^{S_{|\nu|}}\right)
		\uparrow_{S_{|\mu|}\times S_{|\nu|}}^{S_n}.		    
\end{equation}
By the Littlewood-Richardson rule~\cite[Theorem A1.3.3]{EC2}, the outer product  of two irreducible characters 
$\left( \chi^\alpha\otimes\chi^\beta\right)\uparrow_{S_m\times S_{n-m}}^{S_n}$ 
contains irreducible representations indexed by hooks if and only if both $\alpha$ and $\beta$ are hooks; in the latter case,
\[
\langle \left( \chi^{(m-i,1^i)} \otimes \chi^{(n-m-j,1^j)}\right)\uparrow_{S_m \times S_{n-m}}^{S_n}, \chi^{(n-k,1^k)}\rangle = \begin{cases}
        1, & \ k\in \{i+j, i+j+1\};\\
        0, & \ \rm{otherwise.}
\end{cases}
\]
Therefore
\[
M_\lambda(x) = (1+x) M_\mu(x) M_\nu(x),
\]
as claimed.
\end{proof}	

We deduce 

\begin{corollary}
If $\lambda$ is a partition with more than one cycle length then
$M_\lambda(x)$ is divisible by $1+x$ and the quotient has non-negative coefficients.
\end{corollary}

\begin{remark}\label{rem:distinct} 
%Note that i
In this case, the existence of a cyclic descent extension
may be proved directly as follows.
By Equation~\eqref{eq:distinct}, $\psi^\lambda$ is a sum of characters indexed by disconnected shapes. 
%, with one component of size $|\nu|$ and the other component of size $|\mu|$.
Thus, by %Theorem~\ref{thm:GR} and 
Corollary~\ref{cor:Schur-gf}, the distribution of the descent set over $\C_\lambda$ is equal to a sum of distributions over the sets of SYT of various disconnected shapes.
By Theorem~\ref{conj1}, each of these sets carries a cyclic descent extension, hence so does $\C_\lambda$.
\end{remark}

\section{Non-negativity: the single cycle case}
\label{sec:cycles}
%%%%%%%%%%%%%%%%%%%%%%%%%%%%%%%%%%%%%%%%%%%%%%%%%%%

%\subsection{Outline}\label{sec:outline}
Consider now the case of a conjugacy class with a single cycle.
By Corollary~\ref{cor:only if}, if $r$ is not square-free then $1+x$ divides $M_{(r)}(x)$. 
The main result of this section is

\begin{proposition}\label{prop:cycles_nonnegativity}
    If $r$ is not square-free then the coefficients of $M_{(r)}(x)/(1+x)$ are non-negative
\end{proposition}
	 
%The $r=2$ case is inherent in \cite{Saxletal}.
	
%\ 
%In this section we prove Theorem~\ref{t:main_hook-alternating2}.   
		
It follows from Lemma~\ref{t:unimodal} below that, in order to prove Proposition~\ref{prop:cycles_nonnegativity}, it suffices to show the unimodality (to be defined) of $M_{(r)}(x)$. This is the content of the following statement.

%To prove Proposition~\ref{prop:cycles_nonnegativity} recall the notation~\eqref{notation:m_k_hLc}. 
%{e_k_interpretation1} and Equation~\eqref{eq:alternating}.
%\[
%m_{k,\mu}:=\langle \psi^{\mu}, \chi^{(n-k,1^k)}\rangle.
%\]
%The explicit description of the coefficients of $(1+x)M_{(n)}(x)$ given in Theorem~\ref{prop:e_i_formuli1} below, combined with a variant of the Witt transform presented in Observation~\ref{t:F} %Equation~\eqref{eq:F} below, 
%Section~\ref{sec:Witt-subsec}, 
%will be applied in Section~\ref{sec:unimodality} to prove the following. 
% unimodality phenomenon. 
%of the sequence $m_1, m_2,\dots, m_{n-1}$ in the case of the full cycle, that is when $s=1$.
		 
\begin{proposition}\label{prop:s=1_unimodal}
    For every positive integer $r$, the sequence $m_{0,(r)}, m_{1,(r)},\ldots, m_{r-1,(r)}$ is unimodal.
    The largest element is one of the middle ones, namely $m_{i,(r)}$ for $i = (r-1)/2$ if $r$ is odd, and either $i = (r-2)/2$ or $i = r/2$, or both, if $r$ is even.
\end{proposition}

In Subsection~\ref{sec:Witt-subsec} we use a variant of the Witt transform to produce explicit formulas for the coefficients $m_{j,(r)}$ (Lemma~\ref{t:m_formula}).
Then, in Subsection~\ref{sec:unimodality}, we prove their unimodality.

%See Proposition~\ref{prop:s=1_unimodal}. 
%	 By Lemma~\ref{t:unimodal}, 
%This unimodality phenomenon will be shown to imply Proposition~\ref{prop:cycles_nonnegativity}.

%Combining this with Lemma~\ref{lem:2}  and proposition~\ref{prop:divisibility}, implies 
%the existence of cyclic descent extension for the conjugacy class of $n$-cycles $\C_{(n)}$ when $n$ is not square-free. 

%\subsection{Formulas for inner product}\label{sec:e}

\subsection{A variant of the Witt transform}
\label{sec:Witt-subsec}
 
In this subsection we present a variant of the Witt transform, which will be used to prove non-negativity in Sections ~\ref{sec:unimodality} and~\ref{sec:Witt}.
 
% \medskip
	
%	In this subsection we give an explicit formula for the multiplicities 
	% $$m_k:=\langle \psi,\chi^{(n-k,1^k)}\rangle,$$
	%$m_k=\langle \omega^{S_n},\chi^{k}\rangle$. Here $\chi^{k}=\chi^{(n-k,1^k)}$ denotes the irreducible character of $S_n$ associated to $(n-k,1^k)$, the hook partition with leg-length $k\leq n-1$. 
%where $\psi:=\psi^{(r^s)}$. 

%	Nota bene, these multiplicities depend on $r$ and $s$. 
%	While $r$ is fixed throughout the whole section, $s$ and $n=rs$ vary. 
%	This dependence is suppressed in the notation hoping that there will be no misunderstanding caused by that.

%\smallskip
    
Denote by $(i,j)$ the greatest common divisor of two integers $i,j$. Recall the arithmetical M\"obius function $\mu$. 
%, the sum of the primitive $d$-th roots of $1$. If $d$ has a prime square divisor then $\mu(d)=0$. Otherwise, $d$ is a product of $k$ distinct primes and $\mu(d)=(-1)^k$.
    
\begin{definition}\label{def:f}
For a positive integer $r$ define 
\[
    f_j(r) 
    := \frac{1}{r} \sum_{d|(r,j)} \mu(d)(-1)^{(d+1)j/d} \binom{r/d}{j/d} 
	\qquad (0\le j\le r). 
\]
\end{definition}
   
\begin{observation}\label{obs:f_values}
    By definition, %for all positive integers $r$
    \[
        f_1(r) = f_{r-1}(r) = 1
        \qquad (r \ge 1).
    \]
    Also, the fundamental property
    \[
        \sum_{d|r} \mu(d) = 
        \begin{cases}
            1, &\text{if } r=1; \\
            0, &\text{if } r>1,
        \end{cases}
    \]
    and some case analysis ($r$ odd, or $r \equiv 0 \pmod 4$, or $r \equiv 2 \pmod 4$) imply that
    \[
        f_0(r) = 
        \begin{cases} 
            1,&\text{ if }  r=1;\\
            0,&\text{ if } r>1\\
        \end{cases}
        \qquad \text{and} \qquad
        f_r(r) = 
        \begin{cases} 
            1,&\text{ if }  r=1,2;\\
            0,&\text{ if } r>2.
        \end{cases}
    \]
    %so that $f_0(r) = f_r(r)$ for $r \ne 2$.
    % $f_0(r)=f_r(r)$ for every odd $r$.
    %It is known that, for all $j$, $f_j$ is a non-negative integer.
\end{observation}
  
%Recall the notation $e_{k,\psi^{(n)}}$ from~\eqref{e_k_interpretation1}.
%In this section we abbreviate 
%%use the abbreviation 
%$e_k(r):=e_{k,\psi^{(r)}}$. 

%\begin{definition}\label{def:e}
%For a positive integer $r$ denote 
%\[
%    e_{j,(r)}
%    := \langle \psi^{(r)}, \chi^{(1^{j})\oplus (r-j)} \rangle
%	\qquad (0\le j\le r). 
%\]
%\end{definition}
    
For the higher Lie character $\psi^{(r)}$, 
simplify slightly the notations in Equation~\eqref{e_k_interpretation1}:
\[
    m_{j,(r)}
    := \langle \psi^{(r)},\chi^{(r-j,1^j)} \rangle 
    \quad (0 \le j \le r-1)
    \quad \text{and}\quad	
    e_{j,(r)} 
    :=\langle \psi^{(r)},\chi^{(1^j)\oplus(r-j)} \rangle
    \quad (0 \le j \le r).
\]

\begin{proposition}\label{prop:e_i_formuli1}
    For every $0\le j\le r$ %and $0\le j<r$  
    %\begin{equation}\label{eq:Sundaram}
    \[
    e_{j,(r)} = f_j(r).
    %  :=\langle \psi^{(r)}, \chi^{(1^{j})\oplus (r-j)}\rangle,  
    %\end{equation}
    \]
    In particular, $f_j(r)$ is a non-negative integer.
\end{proposition}

\begin{remark}\label{rem:e=s}
%\begin{itemize}
    %\item[1.]  
    Proposition~\ref{prop:e_i_formuli1} will not be proved here, since it is the special case $s=1$ of Theorem~\ref{prop:e_i_formuli} below. 
    %\item[2.]    
    %Theorem~\ref{prop:e_i_formuli1} is not new. 
    It also follows from a well known result of Kra\'skiewicz and Weyman~\cite{KraskiewiczWeyman} (Lemma~\ref{lem:Kra-Wey} below).
    %    see Section~\ref{remarks_on_s=1}.
    A symmetric functions proof which applies~\cite[Lemma 2.7]{Sundaram_Adv} was presented by Sheila Sundaram~\cite{Sundaram_personal}. 
    Another proof follows from~\cite[Theorem 3.1]{ET}. 
    %see Remark~\ref{rem:ET}.
    %discussion in  Section~\ref{remarks_on_s=1}. 
   	See Subsection~\ref{remarks_on_s=1} below for a discussion.
%\end{itemize}
\end{remark}
 
%\smallskip

%  \subsection{Variant of the Witt transform}\label{sec:Witt-subsec}
 
\begin{definition}\label{def:F_polynomial}
    For a fixed positive integer $r$, collect the multiplicities $f_j(r)$ into a polynomial
    \[
	    F_r(x) := f_0(r) + f_1(r)x + f_2(r)x^2+ \ldots + f_r(r)x^r.
    \]
\end{definition}

Equation~\eqref{eq:alternating} and Proposition~\ref{prop:e_i_formuli1} imply
\begin{corollary}\label{t:F_and_M}
    \[
        F_r(x) = (1+x) M_{(r)}(x).
    \]
\end{corollary}

\begin{observation}\label{t:F}
\[
F_r(x) = \frac{1}{r} \sum_{d|r} \mu(d) (1-(-x)^d)^{r/d}.
\]
\end{observation}
\begin{proof}
Use Definition~\ref{def:f}, and write $j = kd$ if $d | (r,j)$. Then
%\begin{split}\label{eq:F}
%\end{split}
\begin{align*}
F_r(x) 
&= \sum_{j=0}^{r} x^j \sum_{d|(r,j)} \frac{\mu(d) (-1)^{(d+1)j/d}}{r} \binom{r/d}{j/d} 
= \sum_{d|r} \frac{\mu(d)}{r} \sum_{k=0}^{r/d} (-1)^{(d+1)k} \binom{r/d}{k}x^{k d} \\
&= \sum_{d|r} \frac{\mu(d)}{r} (1-(-x)^d)^{r/d}.
\end{align*}

\end{proof}
	
\begin{remark}
Recall from \cite{Moree} that the $r$-th Witt transform of a polynomial $p(x)$ is defined by 
\[
\mathcal{W}_p^{(r)}(x)=\frac{1}{r}\sum_{d|r}\mu(d)p(x^d)^{r/d}.
\] 
In our case, put $p(x)=1-x$ to get $F_r(x) = \mathcal{W}_p^{(r)}(-x)$. 
The proof of Theorem~4 and Lemma~1 in~\cite{Moree} could have been used to prove that the coefficients of $F_r(x)$ are non-negative integers. This non-obvious property of the numbers $f_j(r)$ also follows, of course, from their interpretation in Proposition~\ref{prop:e_i_formuli1} as inner products of two characters.
What we really need, in Proposition~\ref{prop:cycles_nonnegativity}, is the nonnegativity of the coefficients of $F_r(x)/(1+x)^2$.
%Although monotonicity and signs of coefficients of Witt transforms are of a general concern, our results do not follow from them. We need non-negativity \emph{after} we divide by $(1+x)^2$. However, our methods are of similar flavor and might be applied in more general contexts. 
\end{remark}

We now produce an explicit formula for each coefficient $m_{j,(r)}$. For a combinatorial interpretation of these numbers, see Lemma~\ref{lem:Kra-Wey} below.

\begin{lemma}\label{t:m_formula}
    For a positive integer $r$,
    \[
		m_{j,(r)}
		= \frac{1}{r} \sum_{d|r} \mu(d) \binom{r/d - 1}{\lfloor j/d \rfloor}  (-1)^{j+\lfloor j/d \rfloor} 
		\qquad (0 \le j \le r-1).
    \]
\end{lemma}
\begin{proof}
	%As $s=1$, so 
	%By Theorem~\ref{prop:e_i_formuli1}, $e_{j,(r)} = f_j(r)$. 
	%Thus, by~\eqref{eq:alternating}, 
	By Corollary~\ref{t:F_and_M},
	\[
	    (1+x) \sum_{j=0}^{r-1} m_{j,(r)}x^j=F_r(x). 
	\]
	
	Using Definition~\ref{def:f} and Observation~\ref{t:F},
	%Equation~\eqref{eq:F}, 
	we can write
	\[
	    \sum_{j=0}^{r-1} m_{j,(r)} x^j 
	    = \frac{1}{r(1+x)} \sum_{d|r} \mu(d) (1-(-x)^d)^{r/d}.
    \]
	Using
	\[
		\frac{(1-(-x)^d)^{r/d}}{1+x} 
		= (1-(-x)^d)^{r/d-1} (1 - x + x^2 - \ldots +(-x)^{d-1})
	\]
	and comparing coefficients of $x^j$, where $j = dk + \ell$ with $0 \le \ell \le d-1$, we get
	%\begin{equation}\label{eq:ET1}
	\[
		r m_{j,(r)}
		= \sum_{d|r} \mu(d) \binom{r/d - 1}{k}  (-1)^{(d+1)k + \ell}
		= \sum_{d|r} \mu(d) \binom{r/d - 1}{\lfloor j/d \rfloor}  (-1)^{j+\lfloor j/d \rfloor}.
	\]
	%\end{equation}
\end{proof}

\subsection{Unimodality}
\label{sec:unimodality}
	
A sequence $(a_0,\ldots,a_n)$ of real numbers is called {\em unimodal} if there exists an index $0 \le i_0 \le n$ such that the sequence is weakly increasing up to position $i_0$ and weakly decreasing afterwards:
%the sequence is increasing $a_i\leq a_{i+1}$ for $i< i_0$ and decreasing $a_i\geq a_{i+1}$ for $i\geq i_0$.
$a_0 \le a_1 \le \ldots \le a_{i_0} \ge \ldots a_{n-1} \ge a_n$.
	
\begin{lemma}\label{t:unimodal}
Let $a(x) = a_0 + a_1 x + \ldots + a_n x^n$ be a polynomial with real, nonnegative and unimodal coefficients.
Assume that $1+x$ divides $a(x)$, and let $b(x) := a(x)/(1+x)$.
Then the coefficients of $b(x)$ are nonnegative.
\end{lemma}%
	
\begin{proof}
Let $b(x) = b_0 + \ldots + b_{n-1} x^{n-1}$.
Then $a_0 = b_0$, $a_n = b_{n-1}$, and
\begin{equation}\label{eq:unimodal1}
a_i = b_{i-1} + b_i 
\qquad (1 \le i \le n-1)
\end{equation}
Of course, divisibility of $a(x)$ by $1+x$ implies that
\[
\sum_{i=0}^{n} (-1)^i a_i = a(-1) = 0.
\]
Inverting~\eqref{eq:unimodal1} we get
\begin{equation}\label{eq:unimodal2}
b_i = \sum_{j=0}^i (-1)^{i-j} a_j
\qquad (0 \le i \le n-1)
\end{equation}
and, similarly,
\begin{equation}\label{eq:unimodal3}
b_i = \sum_{j=i+1}^n (-1)^{j-i-1} a_j
\qquad (0 \le i \le n-1).
\end{equation}
By assumption, the sequence $(a_0, \ldots, a_n)$ is nonnegative and unimodal, namely: 
there exists an index $0 \le i_0 \le n$ such that
\[
0 \le a_0 \le \ldots \le a_{i_0} \ge \ldots \ge a_n \ge 0.
\]
It follows from~\eqref{eq:unimodal2} that, for odd indices $0 \le 2i+1 \le i_0$,
\[
b_{2i+1} = (a_{2i+1} - a_{2i}) + \ldots + (a_1 - a_0) \ge 0
\]
and, for even indices $0 \le 2i \le i_0$,
\[
b_{2i} = (a_{2i} - a_{2i-1}) + \ldots + (a_2 - a_1) + a_0 \ge 0.
\]
By~\eqref{eq:unimodal3}, a similar argument holds for 
%odd and even 
indices greater or equal to $i_0$, and the proof is complete.
\end{proof}
	
%\todo{For this motivation we will now establish the unimodality of the sequence of coefficients $m_{0,(n)},\ldots,m_{n-1,(n)}$. % if $s=1$. 
%Although this is a combinatorial statement in view of Lemma~\ref{lem:Kra-Wey} below, our proof is analytic.
%} 

Lemma~\ref{t:unimodal} shows that non-negativity of a sequence can be proved by showing unimodality of a related sequence. In particular, Proposition~\ref{prop:cycles_nonnegativity} would follow once we show the unimodality of the polynomial $M_{(n)}(x)$.
In order to do that, we need the following technical lemma.

\begin{lemma}\label{lem:binomial_approx}
    Assume that $r > 7$ and $1 < j < r/2$;
    if $j = (r-1)/2$ assume also that $r > 11$.
    Let $d > 1$ be a divisor of $r$, and denote
	\[
	A_{r,j,d} 
	:= \frac{\displaystyle\binom{r/d - 1} {\lfloor j/d \rfloor}} {\displaystyle\binom{r-1}{j}}.
	\]
	Then
	\[
	\frac{(r-1)(r-j)}{r-2j} A_{r,j,d} \le 
	\begin{cases}
	        1, &\text{if } d>2; \\
	        3/2, &\text{if } d=2.
	\end{cases}
	\]
	%\begin{equation}\label{binom_cases}
	%	(r-1) A_{r,j,d} \le
	%	\begin{cases}
	%	1, 
	%	&\text{if } d=2 \text{ and } j=2;\\ %2 \mid j;\\
	%	3(r-2j)/2(r-j), 
	%	&\text{if } d=2 \text{ and } j \ge 3;\\ %2 \nmid j;\\
	%	(r-2j)/(r-j), 
	%	&\text{if } d>2.
	%	\end{cases}
	%\end{equation}
\end{lemma}
	
%\todo{to be double checked for $r=9, j=4, d=3$; maybe we should assume $r>9$ and check by hand computer for smaller ones?}
	
\begin{proof}
    Write
	\[
	\binom{r-1}{j}
	= \frac{(r-1)(r-2) \cdots (r-j)}{j!}
	= \prod_{1 \le i \le j} \frac{r-i}{i}.
	\]
	Let $\ell := \lfloor j/d \rfloor$. Then $\ell d$ is the largest multiple of $d$ not exceeding $j$, hence
	\[
	\binom{r/d - 1}{\lfloor j/d \rfloor} 
	=\binom{r/d - 1}{\ell}
	= \frac{(r-d)(r-2d)\cdots (r-\ell d)}{d \cdot 2d \cdots \ell d}
	= \prod_{1 \le i \le j,\, d \mid i} \frac{r-i}{i},
	\]
	The quotient $A_{r,j,d}$ can therefore be written in the form
	\begin{equation*}%\label{Ardj}
		A_{r,j,d} 
		%= \frac{\displaystyle\prod_{1 \le i \le j,\, d\nmid i} i} {\displaystyle\prod_{1 le i \le j,\, d \nmid i} (r-i)}
		= \prod_{1 \le i \le j,\, d \nmid i} \frac{i}{r-i}.
	\end{equation*}
	By assumption $j < r/2$, thus $i/(r-i) < 1$ for all $1 \le i \le j$. It follows that $A_{r,j,d}$ is a decreasing function of $j$, with $A_{r,1,d} = 1/(r-1)$.
	%Let us consider several implications of this observation.
	%for the various cases of~\eqref{binom_cases}.
	%
	%For $d=2$ and any $1 < j < r/2$,
	%\[
	%A_{r,j,2} \le A_{r,2,2} = \frac{1}{r-1}.
	%\]
	%
	%For $d=2$ and $j = (r-1)/2 \ge 3$,
	%\[
	%A_{r,j,2} \le A_{r,3,2} = \frac{3}{(r-1)(r-3)} 
	%\le \frac{3(r-2j)}{2(r-1)(r-j)},
	%\]
	%where the last inequality is equivalent to 	$2(r-j) \le (r-3)(r-2j)$, and this follows from $r-2j = 1$ and $2(r-j) = r+1 \le r-3$. ????
    
    For $d > 2$ and $2 \le j \le (r-2)/2$,
    \[
    A_{r,j,d} \le A_{r,2,d} = \frac{2}{(r-1)(r-2)}
    \le \frac{r-2j}{(r-1)(r-j)},
    \]
    where the last inequality, equivalent to $2(r-j) \le (r-2j)(r-2)$, follows from $2 \le r-2j$ and $r-j \le r-2$. 
	
	Similarly, for $d=2$ and $3 \le j \le (r-2)/2$,
	\[
	A_{r,j,2} \le A_{r,3,2} = \frac{3}{(r-1)(r-3)} 
	\le \frac{3(r-2j)}{2(r-1)(r-j)},
	\]
	where the last inequality, equivalent to $2(r-j) \le (r-2j)(r-3)$, follows from $2 \le r-2j$ and $r-j \le r-3$.

    For $d = 2$ and $j = 2$,
    \[
    A_{r,2,2} = \frac{1}{r-1} \le \frac{3(r-4)}{2(r-1)(r-2)},
    \]
    where the inequality, equivalent to $2(r-2) \le 3(r-4)$, follows from $r \ge 8$.
    
	%We turn to the first case of \eqref{binom_cases}. Then $A_{r,j,2}\leq A_{r,2,2}=\frac{1}{r-1}$, so the claim is clear.
		
	%In the second case $A_{r,j,2}\leq A_{r,3,2}=\frac{3}{(r-1)(r-3)}$. The claim is that %$2(r-j)(r-1)\leq (r-1)(r-3)(r-2j)$. 
	%$2(r-j)\leq (r-3)(r-2j)$. It holds as $2\leq (r-2j)$ and $r-j\leq r-3$.
		
	%In the last case $A_{r,j,d}\leq A_{r,2,d}=\frac{2}{(r-1)(r-2)}$. The claim holds if 	$2(r-j)\leq (r-2j)(r-2)$, which is true indeed unless $r=2j+1$, since in this case $2\le r-2j$ and $r-j\le r-2$. 
    
    \smallskip
    
	It remains to consider the case $j = (r-1)/2$, namely $r = 2j+1$, for $d \ge 2$.
	Note that in this case we assumed that $r > 11$, namely $j > 5$.
	
	Assume first that $d > 2$. Then 
	\[
    A_{r,j,d} \le A_{r,6,d} \le A_{r,4,d}
    = \prod_{1 \le i \le 4,\, d \nmid i} \frac{i}{r-i}
    = \frac{a_d}{\displaystyle\binom{r-1}{4}},
    \]
    where
    \[
    %\frac{A_{r,5,d}}{A_{r,5,6}} =
    a_d =
    \begin{cases}
        1, &\text{if } d > 4; \\
        (r-d)/d, &\text{if } d = 3,4.
        %(r-2)(r-4)/8, &\text{if } d = 2.
    \end{cases}
    \]
    Clearly	
	\[
	1 < \frac{r-4}{4} < \frac{r-3}{3} 
	%< \frac{(r-3)(r-4)}{3 \cdot 4} < \frac{(r-2)(r-4)}{2 \cdot 4},
	\]
	and therefore 
	\[
	A_{r,j,d} \le A_{r,4,d} \le A_{r,4,3} 
	= \frac{8}{(r-1)(r-2)(r-4)}
	\le \frac{r-2j}{(r-1)(r-j)},
	\]
	where the last inequality, equivalent (since $r = 2j+1$) to $8(j+1) \le (2j-1)(2j-3)$ and to $4j^2 - 16j \ge 5$, holds for $j \ge 5$.
	
	Finally, assume that $r = 2j+1$ and $d = 2$.
	Then 
	\[
	A_{r,j,2} \le A_{r,6,2} = A_{r,5,2} 
	= \frac{15}{(r-1)(r-3)(r-5)}
	\le \frac{3(r-2j)}{2(r-1)(r-j)},
	\]
	where the last inequality, equivalent (since $r = 2j+1$) to $5(j+1) \le 2(j-1)(j-2)$ and to $2j^2 - 11j \ge 1$, holds for $j \ge 6$.
	This completes the proof.

	%Since $d>2$, $[\frac{4}{d}]\in\{0,1\}$ and 	$\frac{r}{d}\le \frac{r}{3}$. Hence%, by \eqref{Ardj},
	%\[
	%	A_{r,j,d}\leq A_{r,5,d}\leq \frac{\frac{r}{3}-1}{\displaystyle\binom{r-1}{5}}= \frac{40}{(r-1)(r-2)(r-4)(r-5)}.
	%\]
    %		\[
    %			A_{r,j,d}\leq A_{r,4,d}\leq \frac{\frac{r}{3}-1}{\displaystyle\binom{r-1}{4}}= \frac{8}{(r-1)(r-2)(r-4)}.
    %		\]
	%Thus, in this case we get
    %\[
    %A_{r,j,d}\frac{(r-1)(r-j)}{r-2j}
    %\leq A_{r,4,d}\frac{(r-1)(r-j)}{r-2j}
    %\leq
	%\frac{40(r-j)}{(r-2)(r-4)(r-5)(r-2j)}
%	\]
%	\[
	%=\frac{40(j+1)}{(2j-1)(2j-3)(2j-4)} %=1-\frac{2(j^2-3j-1)}{(2j-1)(j-1)}
	%\leq \frac{40}{63},
	%\]
    %\todo{In last inequality we have to assume $j\ge 5$???}
    %		In particular, the claim is true if $j=3$, because $r>7$ is the assumption of the Lemma. So we can assume $j>4$. 
    %		If however, $r=2j+1$ then for $j\geq 4$ we get
    %		\[A_{r,j,d}\frac{(r-1)(r-j)}{r-2j}\leq A_{r,4,d}\frac{(r-1)(r-j)}{r-2j}\leq
    %		\frac{3\cdot2\cdot1(r-j)}{(r-2)(r-3)(r-2j)}
    %		\]
    %		\[
    %		=\frac{3(j+1)}{(2j-1)(j-1)}=1-\frac{2(j^2-3j-1)}{(2j-1)(j-1)}\leq 1,
    %		\]
	%as required.
\end{proof}
	
%	\begin{proposition}\label{prop:s=1_unimodal}
%		Let $s=1$. Then the sequence $m_1,\,m_2,\ldots$ is unimodal. Hence all partial alternating sums $d_i=\sum_{j\leq i} (-1)^{i-j}m_j$ are nonnegative.
%	\end{proposition}
\begin{proof}[Proof of Proposition~\ref{prop:s=1_unimodal}.]
    We need to show that $m_{0,(r)} \le m_{1,(r)} \le \ldots \le m_{\lfloor (r-1)/2 \rfloor}$ and     $m_{r-1,(r)} \le m_{r-2,(r)} \le \ldots \le m_{\lceil (r-1)/2 \rceil}$ for any positive integer $r$.
    
	For $1 \le r \le 7$, computing the polynomial $M_r(x) := F_r(x)/(1+x)$ explicitly, using Observation~\ref{t:F}, gives
	\begin{align*}
		M_1(x) &= 1; \qquad 
		M_2(x) = M_3(x) = x; \qquad
		M_4(x) = x+x^2; \\
		M_5(x) &= x+x^2+x^3; \qquad
		M_6(x) = x+2x^2+x^3+x^4; \\
		M_7(x) &= x+2x^2+3x^3+2x^4+x^5.
	\end{align*}
	The claim clearly holds in these cases.
	Assume from now on that $r > 7$.
	
    Informally, the explicit formula for $m_{j,(r)}$ in Lemma~\ref{t:m_formula} has a dominant term corresponding to $d=1$, i.e.,
    $rm_{j,(r)}$ is approximately equal to $\binom{r - 1}{j}$. 
    We will show that this approximation is good enough to make the sequence $m_{0,(r)}, \ldots, m_{r-1,(r)}$ unimodal, like the sequence of binomial coefficients.
    Note that, unlike the binomial coefficients, this sequence is not always palindromic; see Proposition~\ref{conj:r2} below.
    
	%We also record the term corresponding to $d=2$, which appears only if $2|r$:
	%\begin{equation}\label{rm_j_approx_even}
	%\[
	%    rm_{j,(r)}
	%	= \begin{cases} \displaystyle
	%	\binom{r-1}{j} - (-1)^{j + \lfloor j/2 \rfloor} \binom{r/2 - 1}{\lfloor j/2 \rfloor} + \sum_{d|r,\,d>2} \pm\mu(d) \binom{r/d - 1}{\lfloor j/d \rfloor},
	%	&\text{if $r$ is even;} \\ \displaystyle
	%	\binom{r-1}{j} + \sum_{d|r,\,d>2} \pm\mu(d) \binom{r/d - 1}{\lfloor j/d \rfloor},
	%	&\text{otherwise.}
	%    \end{cases}
    %\end{equation}
	%\]

	\medskip
	
	We first show that $m_{j-1,(r)} \le m_{j,(r)}$ for $1 \le j < r/2$. Recall that we assume $r > 7$.
	
	For $j=1$, Lemma~\ref{t:m_formula} shows that, 
	for $r > 1$, $m_{0,(r)} = 0 < 1 = m_{1,(r)}$. 
	
    Assume now that  $1 < j < r/2$.	
    Clearly, for these values of $j$ and any divisor $d$ of $r$,
	%\[
	%(-1)^{j + \lfloor j/2 \rfloor} 
	%= \begin{cases}
	%(-1)^{j/2}, &\text{if } j \text{ is even;}\\
	%(-1)^{(j+1)/2}, &\text{if } j \text{ is odd.}
	%\end{cases}
	%\]
    %Also,
	%and similarly, for even $r$ and even $j$, 
	%\[
	%    \binom{r/2 - 1}{j/2} - \binom{r/2 - 1}{j/2 - 1} 
	%	= \binom{r/2 - 1}{j/2} \left(1-\frac{j}{r-j}\right).
	%\]
	%and
	\[
	\binom{r/d - 1}{\lfloor j/d \rfloor} 
	\ge \binom{r/d - 1}{\lfloor (j-1)/d \rfloor}.
	\]
	%Putting all this together we  conclude that, for even $r$ and even $j$,
	%\begin{align*}
	%    rm_{j,(r)} &- rm_{j-1,(r)} \\
	%    &\ge \left[ \binom{r-1}{j} - \binom{r-1}{j-1} \right] - (-1)^{j/2} \left[ \binom{r/2 - 1}{j/2} - \binom{r/2 - 1}{j/2 - 1} \right] -2\sum_{d|r,\,d>2} \binom{r/d - 1}{\lfloor j/d \rfloor}\\
	%	&\geq \left[ \binom{r-1}{j} - \binom{r/2 - 1}{j/2} \right] \frac{r-2j}{r-j} - 2\sum_{d|r,\,d>2} \binom{r/d - 1}{\lfloor j/d \rfloor},
	%\end{align*}
	%whereas in every other case
	We conclude, by Lemma~\ref{t:m_formula}, that
	%\begin{align}
	%\begin{equation}\label{eq:r_or_j_odd}
	\[
	    rm_{j,(r)} - rm_{j-1,(r)} 
	    \ge \left[ \binom{r-1}{j} - \binom{r-1}{j-1} \right] - 2\sum_{d|r,\,d>1} \binom{r/d - 1}{\lfloor j/d \rfloor}. %\\
	    %&= \binom{r-1}{j} \frac{r-2j}{r-j} - 2\sum_{d|r,\,d>1} \binom{r/d - 1}{\lfloor j/d \rfloor}.
	%\end{align}
	%\end{equation}
	\]
	Since
		\[
	    \binom{r-1}{j} - \binom{r-1}{j-1}
	    = \binom{r-1}{j} \left(1 - \frac{j}{r-j}\right)
	    = \binom{r-1}{j} \frac{r-2j}{r-j},
	\]
    using the notation of Lemma~\ref{lem:binomial_approx} we get
	\[
	\frac{rm_{j,(r)} - rm_{j-1,(r)}} {\binom{r-1}{j}\frac{r-2j}{r-j}}
	\ge %\begin{cases} \displaystyle
	        %1 - A_{r,j,2} - 2\frac{r-j}{r-2j} \sum_{d|r,\,d>2} A_{r,j,d},
	        %&\text{if $r$ and $j$ are even;} \\ \displaystyle
	        1 - 2\frac{r-j}{r-2j} \sum_{d|r,\,d>1} A_{r,j,d}.
	        %&\text{otherwise.}
	%\end{cases}
	\]
	%If $r$ and $j$ are even, Lemma~\ref{lem:binomial_approx} implies that
	%conclude in both cases that, unless $r= 7,9,\,2j=r-1$,
	%\[
	%	\frac{rm_{j,(r)} - rm_{j-1,(r)}} {\binom{r-1}{j}\frac{r-2j}{r-j}} 
	%	\ge 1 - \frac{1}{r-1} - \sum_{d|r,\,d>2} \frac{2}{r-1}
	%	\ge 1 - \frac{2d(r)-3}{r-1},
	%\]
	Let $d(r)$ denote the number of divisors of $r$. 
	For odd $r>7$ (unless $j = (r-1)/2$ and $r \in \{9,11\}$), Lemma~\ref{lem:binomial_approx} implies that
	\begin{align*}
		\frac{rm_{j,(r)} - rm_{j-1,(r)}} {\binom{r-1}{j}\frac{r-2j}{r-j}} 
		\ge 1 - \sum_{d|r,\,d>2} \frac{2}{r-1} 
		= 1 - \frac{2d(r)-2}{r-1}.
	\end{align*}
	For even $r>7$, Lemma~\ref{lem:binomial_approx} implies that
	\begin{align*}
		\frac{rm_{j,(r)} - rm_{j-1,(r)}} {\binom{r-1}{j}\frac{r-2j}{r-j}} 
		\ge 1 - \frac{3}{r-1} - \sum_{d|r,\,d>2} \frac{2}{r-1}
		= 1 - \frac{2d(r)-1}{r-1}.
	\end{align*}
	%In the other cases (unless $r \in \{7,9\}$ and $j = (r-1)/2$), Lemma~\ref{lem:binomial_approx} implies that
	%\begin{align*}
	%	\frac{rm_{j,(r)} - rm_{j-1,(r)}} {\binom{r-1}{j}\frac{r-2j}{r-j}} 
	%	\ge 1 - \delta_{2|r} \frac{3}{r-1} - \sum_{d|r,\,d>2} \frac{2}{r-1} 
	%	\ge 1 - \frac{2d(r)-1}{r-1}.
	%\end{align*}
	%
	%\todo{Sould it be $-\delta_{2|r}\frac{1}{r-1}$ ?}
	%
    We clearly have $2d(r) \le r$ for $r>7$, and therefore $m_{j-1,(r)} \le m_{j,(r)}$ in both cases.
	
	In the remaining cases, namely $j = (r-1)/2$ and $r \in \{9,11\}$, we can compute directly using 	Lemma~\ref{t:m_formula}.
	%we apply Equation~\eqref{eq:r_or_j_odd}. 
	%For $r=7$ and $j=3$ the divisors are $d=1,7$, and
	%\[
	%	7m_{3,(7)}-7m_{2,(7)} 
	%	= \left[ \binom{6}{3} + \binom{0}{0} \right]
	%	- \left[ \binom{6}{2} - \binom{0}{0} \right] 
	%	= 21 - 14 > 0.
		%\geq\frac{1}{4}\binom{6}{3}-2=3.
	%\]
	For $r=9$ and $j=4$ the divisors are $d=1,3,9$, but $\mu(9) = 0$. Thus
	\[
		9m_{4,(9)}-9m_{3,(9)} 
		= \left[ \binom{8}{4} + \binom{2}{1} \right]
		- \left[ \binom{8}{3} - \binom{2}{1} \right]
		= 72 - 54 > 0.
		%\geq\frac{1}{5}\binom{8}{4}-2\binom{2}{1}-2=14-4-2=8.
	\]
	For $r=11$ and $j=5$ the divisors are $d=1,11$. Thus
	\[
		11m_{5,(11)}-11m_{4,(11)} 
		= \left[ \binom{10}{5} + \binom{0}{0} \right]
		- \left[ \binom{10}{4} - \binom{0}{0} \right]
		= 253 - 209 > 0.
		%\geq\frac{1}{5}\binom{8}{4}-2\binom{2}{1}-2=14-4-2=8.
	\]
    
	So far we have proven that $m_{0,(r)} \le m_{1,(r)} \le \ldots \le m_{\lfloor (r-1)/2 \rfloor, (r)}$ for $r > 7$.
	
	The remaining inequalities, $m_{r-1,(r)} \le m_{r-2,(r)} \le \ldots \le m_{\lceil (r-1)/2 \rceil, (r)}$, can be written as $m_{r - 1 - (j-1),(r)} \le m_{r - 1 - j,(r)}$ for $1 \le j < r/2$.
	By Lemma~\ref{t:m_formula},
	\[
		m_{r-1-j,(r)}
		= \frac{1}{r} \sum_{d|r} \mu(d) \binom{r/d - 1}{\lfloor (r-1-j)/d \rfloor}  (-1)^{r-1-j+\lfloor (r-1-j)/d \rfloor} 
		\qquad (0 \le j \le r-1).
    \]
    For a divisor $d$ of $r$, if $j = kd + \ell$ with $0 \le \ell \le d-1$ 
    then $r-1-j = (r/d-k-1)d + (d-1-\ell)$ with $0 \le d-1-\ell \le d-1$, 
    so that
    \[
    \lfloor j/d \rfloor + \lfloor (r-1-j)/d \rfloor
    = k + (r/d-k-1)) = r/d-1.
    \]
    It follows that
	\begin{align*}
		m_{r-1-j,(r)}
		&= \frac{1}{r} \sum_{d|r} \mu(d) \binom{r/d - 1}{\lfloor j/d \rfloor}  (-1)^{r-1-j+r/d-1-\lfloor j/d \rfloor} \\
		&= \frac{1}{r} \sum_{d|r} \mu(d) \binom{r/d - 1}{\lfloor j/d \rfloor}  (-1)^{r+r/d-j-\lfloor j/d \rfloor} 
		\qquad (0 \le j \le r-1).
    \end{align*}
    This is exactly the formula for $m_{j,(r)}$ except for the signs of the summands, which differ (for each $d|r$) by the factor $(-1)^{r+r/d}$.
    These signs do not play any role in the proof above that $m_{j-1,(r)} \le m_{j,(r)}$ for $1 \le j < r/2$, which therefore also shows, mutatis mutandis, that $m_{r-1-(j-1),(r)} \le m_{r-1-j,(r)}$ for $1 \le j < r/2$ --- except possibly the explicit confirmation when $j = (r-1)/2$ and $r \in \{9,11\}$. 
    In these cases $r$ is odd, and therefore $(-1)^{r+r/d} = 1$ for any divisor $d$ of $r$.
    This implies that indeed 
	\[
		m_{4,(9)}-m_{5,(9)} 
		= m_{4,(9)}-m_{3,(9)}
		> 0
	\]
	and
	\[
		m_{5,(11)}-m_{6,(11)} 
		= m_{5,(11)}-m_{4,(11)} 
		> 0.
	\]
    %This completes the proof.
\end{proof}

\begin{remark}
	We conjecture that Proposition~\ref{prop:s=1_unimodal} remains true for an arbitrary partition $\mu \vdash n$, in particular for every partition $(r^s) \vdash n$ with any	$s\ge 1$; see Conjecture~\ref{conj:unimodal}.
	%\bigskip
\end{remark}

%We conclude 

\begin{proof}[Proof of Proposition~\ref{prop:cycles_nonnegativity}.]
    Combine Proposition~\ref{prop:s=1_unimodal} with   Lemma~\ref{t:unimodal}.
\end{proof}

We conclude 

	 \begin{corollary}\label{prop:CDE_cycles}
	 The conjugacy class of $n$-cycles $\C_{(n)}$ carries a cyclic descent extension if and only if $n$ is not square-free.
	 \end{corollary}
	 
	 \begin{proof}%[Proof of Corollary~\ref{prop:CDE_cycles}.]
%	     	 By Proposition~\ref{prop:divisibility}, the hook-multiplicity generating function of the higher Lie character $\psi^{n)}$,	 $M_{(n)}(x)$, 
%is divisible by $1+x$
%if and only if $n$ is not square-free. 
Combine %this with  
Lemma~\ref{lem:2} with %Proposition~\ref{prop:divisibility}
Corollary~\ref{cor:only if}.2 and Proposition~\ref{prop:cycles_nonnegativity} 
	  %completes the proof. 
	 \end{proof}

\section{Non-negativity: the case of cycle type $(r^s)$}
%{Proof of Theorem~\ref{t:main_hook-alternating2}}
\label{sec:Witt}
%%%%%%%%%%%%%%%%%%%%%%%%%%%%%%%%%%%%%%%%%%%%%%%%

In this section we consider the case $\lambda = (r^s)$. 
We fix $r$, while $s$ and hence $n=rs$ vary. 
The arguments below also work for the trivial case $r=1$.
		
As in the previous section, instead of the hook multiplicities 
\[
    m_{i,(r^s)} 
    = \langle \psi^{(r^s)}, \chi^{(n-i,1^i)} \rangle
\]
we prefer to work with their consecutive sums,
\[
    e_{i,(r^s)} := m_{i,(r^s)} + m_{i-1,(r^s)}.
\]
%which also have an inner product interpretation, see \eqref{e_k_interpretation} below.
		
Here is the structure of the current section.
In Subsection~\ref{sec:e} 
we obtain an explicit description of $e_{i,(r^s)}$ (Theorem~\ref{prop:e_i_formuli}). The proof involves a detailed computation of character values and inner products of characters.
%Another application of 	the explicit description of $e_k$ is presented in Section~\ref{sec:prod}. 		%It implies 
%where 
In Subsection~\ref{sec:prod} we 
%apply this explicit description of $e_{k,(r^s)}$ to deduce 
transform this description into
a product formula (Corollary~\ref{power_series_form})
for the formal power series %expansion
%\begin{equation}\label{eq:product_formula}
\[
E_r(x,y) 
:= \sum_{i,s \ge 0} e_{i,(r^s)} x^i y^s 
= 1 + (1 + x) M_r(x,y),  %(1+x),
%\end{equation}
\]
where 
\[
M_r(x,y) 
:= \sum_{\substack{i \ge 0 \\ s \ge 1}} m_{i,(r^s)} x^i y^s.
\]
The product formula is a substantial merit of working with $e_{i,(r^s)}$, and it facilitates the extension of the case $s=1$ to $s>1$. 
This is done in Subsection~\ref{sec:nn}, where the result for $s=1$ is used to obtain the general case.
%; see proof of Theorem~\ref{t:main_hook-alternating2}.
%Proposition~\ref{prop:nonneg_coeffs}.
	
%	A variant of the Witt transform is presented in Section~\ref{sec:Witt-subsec}. 
%	In Section~\ref{sec:unimodality} we prove unimodality of the sequence $m_1, m_2,\dots, m_{n-1}$ in the case of the full cycle, that is when $s=1$, see Proposition~\ref{prop:s=1_unimodal}.
		
%		\todo{Old version:\\
%		In Section~\ref{sec:nn} we prove the equivalences of Proposition~\ref{t:main_hook-alternating1}, that is $\sum_k (-1)^km_k=0$ if and only if $r$ has a prime square divisor. See Proposition~\ref{prop:square_divisible}.\\ %This implies the non-existence of the $d_k$ if $r$ is square free.
%	YR: This was proved in Section~\ref{sec:divisibility}.\\

%		In Section~\ref{sec:nn}, Equation~\eqref{eq:product_formula}  combined with		Proposition~\ref{prop:outline_unimodal}, %unimodality %this also 		implies the non-negativity of the coefficient of the quotient $\frac{M_r(x,y)}{1+x}$ 
				%existence of $d_k\geq 0$  		if $s=1$ and $r$ is not square-free.\\
%		YR: This was proved in Section~\ref{sec:cycles}.\\
		
%		In the last step 		we use the $s=1$ result to obtain the general case, see Proposition~\ref{prop:nonneg_coeffs}.\\
%		YR:This new part is the core.}
	%\end{section}

\subsection{Formulas for inner products}\label{sec:e}
	
In this subsection we obtain explicit formulas for the inner products 
%multiplicities 
% $$m_k:=\langle \psi,\chi^{(n-k,1^k)}\rangle,$$
%$m_k=\langle \omega^{S_n},\chi^{k}\rangle$. Here $\chi^{k}=\chi^{(n-k,1^k)}$ denotes the irreducible character of $S_n$ associated to $(n-k,1^k)$, the hook partition with leg-length $k\leq n-1$. 
%where $\psi:=\psi^{(r^s)}$. Let
%\begin{equation}\label{e_k_interpretation}
\[
%m_k := \langle \psi, \chi^{(n-k,1^k)} \rangle
%\quad (0 \le k \le n-1)
%\quad \text{and} \quad	
e_{k,(r^s)} := \langle \psi^{(r^s)}, \chi^{(1^k) \oplus (n-k)} \rangle
\quad (0 \le k \le n).
%\end{equation}
\]
%where $\psi := \psi^{(r^s)}$. 
%Note that 
%\[
%\chi^{(1^k) \oplus (n-k)} 
%= \chi^{(n-k,1^k)} + \chi^{(n-k+1,1^{k-1})}
%\qquad (1 \le k \le n-1).
%\]
Recall that, by Equation~\eqref{eq:alternating}, 
	%arguments as in 	Subsection~\ref{sec:near_hooks}, 
%	{sec:Witt-subsec}, 
	%
	%	Let $\pi$ denote the permutation character of $S_n$ in its natural action on the set $[n]$. % In other words $\pi^k=1_{S_k\times %S_{n-k}}\uparrow^{S_n}$.
	%The following alternating sum description of $\chi^k$ is relatively standard, see Lemmas~$3.2$ and $3.3$ of \cite{Giannelli}:
	%\[\chi^k=\Lambda^k\chi^1=(-1)^k\sum_{i=0}^k(-1)^i\Lambda^i\pi=(-1)^k\sum_{i=0}^k(-1)^i(\varepsilon_{S_i}\times 1_{S_{n-i}})\uparrow^{S_n}.\]
	%
	%By Pieri's rule (see Equation~\eqref{eq:P}) combined with the inverse Frobenius characteristic map,
%%	or alternatively by~\cite[Lemmas 3.2 and 3.3]{Giannelli},
%	\[
%	\chi^{(1^k)\oplus (n-k)}= \chi^{(n-k+1,1^{k-1})}+\chi^{(n-k,1^k)} \qquad (0<k<n),
%	\]
%	and equivalently,
%	\[\chi^{(n-k,1^k)}=
%	\sum_{i=0}^{k} (-1)^{k-i}\chi^{(1^i)\oplus (n-i)} \qquad (0\le k<n).\]
%	In this light, 
the sequences $\{m_k\}_{k=0}^{n-1}$ and $\{e_k\}_{k=0}^{n}$ 
% $m_k=\langle\psi,\chi^k\rangle%=(\phi\uparrow^{S_n},\chi^k)$ and %
%	\begin{equation}\label{e_k_interpretation}
%	e_i=\langle\psi,(\varepsilon_{S_i}\times 1_{S_{n-i}})\uparrow^{S_n}\rangle.
%	\end{equation}
determine each other, via the relations
%	The relations are
%\begin{equation}\label{eq:alternating2}
\[
e_k = m_k+m_{k-1}
\text{ and }
m_k = \sum_{i=0}^k (-1)^{k-i} e_i
\qquad (0 \le k \le n),
%\end{equation}
\]
where $m_k := 0$ for $k = -1$ and $k = n$.
Nota bene, these multiplicities depend on $r$ and $s$
%. 
%While $r$ is fixed throughout the whole section, $s$ and $n=rs$ vary. 
but this dependence is suppressed in the notation.
%, hoping that there will be no misunderstanding caused by that.

%\medskip
\begin{definition}\label{def:partition}
For given non-negative integers $i$, $r$ and $s$, let
%\begin{equation}\label{def:restr_part}
\[
    P_{r,s}(i) 
    := \left\{ \gamma=(\gamma_1,\ldots,\gamma_s) :
    \sum_{\ell = 1}^{s} \gamma_\ell = i,\, r \ge \gamma_1 \ge \gamma_2 \ge \ldots \ge \gamma_s \ge 0 \right\}
%\end{equation}
\]
denote the set of all partitions of $i$ into at most $s$ parts, each of size at most $r$. 
Denote the multiplicity of $j$ in $\gamma \in P_{r,s}(i)$ by 
$k_j(\gamma) := |\{1 \leq \ell \leq s \mid \gamma_\ell=j\}|$.
\end{definition}
	
\begin{figure}
    %\begin{tikzpicture}[scale=0.5]
	%	    \draw[step=1,thin] (0,0) grid (6,5);
	%	    \draw [very thick] (0,0)--(0,5)--(5,5)--(5,4)--(3,4)--(3,2)--(2,2)--(2,1)--(0,1);
    %\end{tikzpicture}
    \ytableausetup{boxsize=1.3em}
    \ydiagram[*(gray)]{5,3,3,2} * {6,6,6,6,6}
    \caption{The partition $\gamma = (5,3,3,2,0) \in P_{6,5}(13)$% corresponding to a double $(S_5\wr S_6,S_{13}\times S_{17})$-coset
	}
	\label{fig:subpart}
\end{figure}
	
\begin{example}
Let $r=6$, $s=5$ and $i=13$. 
Then $\gamma=(5,3,3,2,0)\in P_{6,5}(13)$ is a partition of $13$ with at most $5$ parts, each of size at most $6$; 
see Figure~\ref{fig:subpart}. 
The multiplicities of the parts are 
$k_0(\gamma) = 1$, 
$k_1(\gamma) = 0$, 
$k_2(\gamma) = 1$, 
$k_3(\gamma) = 2$, 
$k_4(\gamma) = 0$, 
$k_5(\gamma) = 1$, 
and $k_6(\gamma) = 0$.
\end{example}

Recall $f_j(r)$ from Definition~\ref{def:f}. 
%and Theorem~\ref{prop:e_i_formuli1}. % for $s=1$.
The main result of this subsection is the following formula.
%Theorem~\ref{prop:e_i_formuli1} is the special case $s = 1$.

\begin{theorem}\label{prop:e_i_formuli}
%\todo{partitions, not compositions!}
For every $s\geq 1$ and $i\geq 0$ we have
\begin{align*}
%e_i = 
e_{i,(r^s)}
= \langle \psi^{(r^s)}, \chi^{(1^i) \oplus (rs-i)}
%(\varepsilon_{S_i}\times1_{S_{n-i}})\uparrow^{S_n}
\rangle 
&= \sum_{\gamma\in P_{r,s}(i)} \prod_{j \ge 0}
%\binom{f_j\;+(k_j(\gamma)-1)\delta_{2|j}}{k_j(\gamma)}\\
(-1)^{(j+1)k_j(\gamma)} \binom{(-1)^{j+1} f_j(r)}{k_j(\gamma)} \\
&= \sum_{\substack{k_0, \dots, k_r \ge 0 \\ \sum_j k_j = s \\ \sum_j jk_j = i}} \prod_{j = 0}^{r}
%\binom{f_j\;+(k_j-1)\delta_{2|j}}{k_j}.
(-1)^{(j+1)k_j} \binom{(-1)^{j+1} f_j(r)}{k_j}.
\end{align*}
		
%\[\sum_{\substack{\sum k_j=s\\ \sum jk_j=i}}\prod_j \binom{f_j\;+(k_j-1)\delta_{2|j}}{k_j}.
%\]
In particular, for $s=1$ we have $e_{i,(r)} = f_i(r)$.
\end{theorem}

\begin{remark}\label{rem:when_factor_is_zero}
%\begin{itemize}
%\begin{remark}\label{rem:s1}
%\item[1.]
The special case $s=1$ was stated (but not proved) in  Proposition~\ref{prop:e_i_formuli1}.
The result in that case is not new, as noted in Remark~\ref{rem:e=s}. 
%\item[2.] 
%The case $s=1$ 
This case
shows that $f_i(r) = e_{i,(r)}$ is an inner product of two characters, and is therefore always a non-negative integer. The factor
\[
%\begin{equation}\label{homogeneous_inner_product2}
%\langle \phi_{r,s}\downarrow_{R_{r,j}\wr S_{s}},\nu_{r,j,s} \rangle =		%\binom{f_j+(s-1)\delta_{2|j}}{s}
(-1)^{(j+1)k_j} \binom{(-1)^{j+1} f_j}{k_j}
= \begin{cases}
\binom{f_j}{k_j}, &\text{ if } j \text{ is odd;}\\
\binom{f_j+k_j-1}{k_j}, &\text{ if } j \text{ is even}\\
\end{cases}
%\end{equation} 
\]
is therefore also a non-negative integer,
and is zero if and only if 
either $j$ is odd and $k_j > f_j$,
or $j$ is even and $k_j > 0 = f_j$.
If $k_j = 0$, this factor is equal to $1$ and may be ignored.
%end{remark}
%	\todo{{\bf Note:} 
%	The case $s=1$ seems to be known (and classical?).  It follows, for example, from~\cite[ Theorem 3.1]{ET}.
	
%	HP: Sorry, I do not understand how this follows, but the flavour, I see, is very similar.}
	
%	\todo{{\bf Question:} If I understand correctly, by Prop.~\ref{prop:e_i_formuli}, for every $r$ and $s$, $e_1=f_1$; note that $f_1=1$ for all $r$.
%	But letting $r=s=2$, $\psi^{2^2}=\chi^{(1^4)}+\chi^{(2,2)}$,
%	thus in this case $e_1=m_0+m_1=0$.
	
%	NO, putting $r=1$ aside, if $i<s$ then $e_i=0$, because in $\gamma$ we should have a $0$-component and that gives $\binom{k_0-1}{k_0}=0$ in the product.}
	
%\end{itemize}
\end{remark}
    
In order to prove Theorem~\ref{prop:e_i_formuli} we need a formula for a certain inner product of characters (Lemma~\ref{lem:char_values_and_inner_product}).

First recall some notations from Definition~\ref{def:higher}:
the centralizer $Z_{(r^s)} \cong \ZZ_r \wr S_s$ of an element of cycle type $(r^s)$,
the linear character $\omega^{(r^s)}$ on $Z_{(r^s)}$, 
and the higher Lie character
$\psi^{(r^s)} := \omega^{(r^s)}\uparrow_{Z_{(r^s)}}^{S_n}$. 
% on $S_n$. 

Embed $Z_{(r^s)} \cong \ZZ_r \wr S_s$ into 
$K_{r,s} \cong S_r \wr S_s \le S_n$, 
where $\ZZ_r \le S_r$ is generated by a full cycle.
%the corresponding wreath product of symmetric groups. 
Denote 
\[
\phi_{r,s} := \omega^{(r^s)}\uparrow_{Z_{(r^s)}}^{K_{r,s}},
\]
so that $\psi^{(r^s)} = \phi_{r,s}\uparrow_{K_{r,s}}^{S_n}$. 
\begin{observation}\label{obs:phi_product} If $s=s_1+s_2$ then $Z_{(r^{s_1})}\times Z_{(r^{s_2})}\leq Z_{(r^s)}$, $K_{r,s_1}\times K_{r,s_2}\leq K_{r,s}$ and also for the characters
\[\omega^{(r^s)}=\omega^{(r^{s_1})}\otimes\omega^{(r^{s_2})}\text{ and }  \phi_{r,s}\downarrow^{K_{r,s}}_{K_{r,s_1}\times K_{r,s_2}}=\phi_{r,s_1}\otimes\phi_{r,s_2}.
\]
\end{observation}

In the lemma below we express the multiplicity of a certain linear character in a restriction of $\phi_{r,s}$.
%to a certain subgroup of $K_{r,s}$. 
This expression will be used, in the proof of Theorem~\ref{prop:e_i_formuli}, to compute $e_{i,(r^s)}$.
	
For every $0\leq j\leq r$, let 
$R_{r,j} := S_j \times S_{r-j} \le S_r$, in the natural embedding. Then
\[
R_{r,j} \wr S_s
%= S_j \wr S_s \times S_{r-j}\wr S_s
= K_{r,s} \cap \left( S_{js} \times S_{(r-j)s} \right).
\]
Denote by $1_{S_n}$ the trivial character and by $\varepsilon_{S_n}$ the sign character of $S_n$, so that
\[
\chi^{(1^k)\oplus(n-k)}=(\varepsilon_{S_{k}}\times 1_{S_{n-k}})\uparrow_{S_k \times S_{n-k}}^{S_n}.
\]
Define
\[
\nu_{r,j,s} := (\varepsilon_{S_{js}} \times 1_{S_{(r-j)s}}) \downarrow_{R_{r,j}\wr S_s}^{S_{js} \times S_{(r-j)s}}.
\]
It is a linear character on $R_{r,j} \wr S_s$.
	
%To prove Theorem~\ref{prop:e_i_formuli} we give first the following combinatorial formula.
%for the inner product 
%$	\langle\phi_{r,s}\downarrow_{R_{r,j}\wr S_{s}},\nu_{r,j,s}\rangle$. 
	
\begin{lemma}\label{lem:char_values_and_inner_product}
	%	Let $0\leq j\leq r$ and $R=S_j\times S_{r-j}\leq S_r$. In the natural embedding $R\wr S_s= K\cap S_{js}\times S_{(r-j)s}$. Let $\phi=\omega\uparrow^K$ as defined above and $\nu=(\varepsilon_{S_{js}}\times 1_{S_{(r-j)s}})\downarrow_{R\wr S_s}$ a $1$-dimensional irreducible character of $R\wr S_s$. %The integer $f_j$ is defined in \eqref{def:f_k}.
	%	Define 
	%	\begin{equation}\label{def:f}
	%	f=f_j:=\sum_{d|(r,j)}\frac{\mu(d)(-1)^{j+j/d}}{r}\binom{r/d}{j/d} \qquad (0\le j\le r).
	%	\end{equation}
		
	%	\todo{$f_0:=1$ s.b. added  ?? NO, $f_0=1$ only for $r=1$, otherwise $f_0=0$. This definition above works for $j=0$, too.}
		
	%Then
    For every $0\le j\le r$,
    $f_j(r)$ is a non-negative integer and 
    %we can express the multiplicity of $\nu$ in $\phi\downarrow_{R\wr S_s}$ as
    \[ %\label{homogeneous_inner_product2}
        \langle \phi_{r,s}\downarrow_{R_{r,j}\wr S_{s}}^{K_{r,s}},\nu_{r,j,s} \rangle 
        = (-1)^{(j+1)s} \binom{(-1)^{j+1} f_j(r)}{s}
        = \begin{cases}
            \binom{f_j(r)}{s}, &\text{if } j \text{ is odd;}\\
            \binom{f_j(r)+s-1}{s}, &\text{if } j \text{ is even.}\\
        \end{cases}
    \]
\end{lemma}
		
\begin{remark}
As a byproduct, Lemma~\ref{lem:char_values_and_inner_product}
provides a new proof of the non-negativity of $f_j(r)$.
Indeed, if $s=1$ then 
$\langle \phi_{r,1} \downarrow_{R_{r,j}}^{K_{r,1}},\nu_{r,j,1} \rangle = f_j(r)$ 
% as also implied by Proposition~\ref{prop:k-subsets mod r} and Lemma~\ref{lem:Kra-Wey}.
is clearly a non-negative integer.
\end{remark}	
	
The rest of this subsection %is dedicated to 
consists of the proofs of Lemma~\ref{lem:char_values_and_inner_product} 
and Theorem~\ref{prop:e_i_formuli}. In these proofs 
%rest of this Subsection, 	%Along the proof, 
$r$, $s$ and $j$ %and $\mu=r^s$ 
are fixed, unless specified otherwise. %Thus we can omit the indices.
For convenience, we omit the indices and write 
$Z := Z_{(r^s)}$, 
$\omega := \omega^{(r^s)}$, 
$\psi := \psi^{(r^s)}$, 
$K := K_{r,s}$, 
$\phi := \phi_{r,s}$, 
$R := R_{r,j}$, and
$\nu := \nu_{r,j,s}$, %, $f:=f_j$.

\begin{proof}[Proof of  Lemma~\ref{lem:char_values_and_inner_product}.]
The proof consists of two parts. 
First we determine the character values of the induced character $\phi = \omega\uparrow_Z^K$ on the wreath product $K=S_r\wr S_{s}$; the resulting formula is Equation~\eqref{eq:char_value}. In the second part we 
apply this formula to compute the inner product.
		
Let $\zeta: \ZZ_r \to \mathbb{C}$ be the primitive linear character used to define $\omega$; see Definition~\ref{def:higher}. 
%which is extended to the centralizer $Z$ to obtain $\omega$, in other words $\zeta$ is the restricted character $\zeta:=\omega \downarrow^Z_{\ZZ_r}$. %To be restricted to $R\wr S_s$.
Recall the explicit formula for %definition of
an induced character \cite[(5.1)]{Isaacs}: 
For a subgroup $H\leq G$ and a character $\chi$ of $H$, define $\chi^0 : G \to \mathbb{C}$ by $\chi^0(g) = \chi(g)$ if $g\in H$ and $\chi^0(g) = 0$ otherwise. Then
\begin{equation}\label{eq:induction}
    \chi\uparrow_H^G(y)
    = \frac{1}{|H|} \sum_{x\in G} \chi^0(x^{-1}yx)
    = \sum_{t\in T} \chi^0(t^{-1}yt),
\end{equation}
where $T$ is a full set of right coset representatives of $H$ in $G$.
		
An element of $K=S_r\wr S_s$ can be represented by an $s$-tuple of elements of $S_r$ and a wreathing permutation from $S_s$, so that
$K=\{(x_1,\ldots,x_s;\sigma) \mid x_1,\ldots,x_s \in S_{r},\,\sigma\in S_s\}$
with the product
$(x_1,\ldots,x_s;\sigma)(y_1,\ldots,y_s;\tau) = (x_1y_{\sigma^{-1}(1)},\ldots,x_sy_{\sigma^{-1}(s)};\sigma\tau)$. 
A full set of right coset representatives of $\ZZ_r$ in $S_r$ is $S_{r-1}$ (in the natural embedding).
Hence, a full set of right coset representatives of 
$Z = \ZZ_r \wr S_s$ in $K= S_r \wr S_s$ is 
$T = %S_{r-1}^s =
\{(x_1,\ldots,x_s;1)\mid (\forall i)\, x_i \in S_{r-1}\}$. For any $z_1,\ldots,z_s\in \ZZ_r$, the shifted set $(z_1,\ldots,z_s;1)T$ is also a full set of right coset representatives. 
Instead of taking the sum over $T$ in \eqref{eq:induction}, we will take it over the union of all the shifted sets, namely $\{(x_1,\ldots,x_s;1) \vert\, (\forall i)\, x_i \in S_r\}$, and divide by $r^s$. 
Since
\[
(x_1,\ldots,x_s;1)^{-1} (y_1,\ldots,y_s;\sigma) (x_1,\ldots,x_s;1)
= (x_1^{-1}y_1x_{\sigma^{-1}(1)},\ldots,x_s^{-1}y_sx_{\sigma^{-1}(s)};\sigma),
\]
we conclude that, for any $y = (y_1,\ldots,y_s;\sigma) \in K$,
\begin{align*}
    \phi(y) 
    &= \omega\uparrow_Z^K(y_1,\ldots,y_s;\sigma)
    = \frac{1}{r^s} \sum_{x_1,\ldots,x_s \in S_r} \omega^0(x_1^{-1}y_1x_{\sigma^{-1}(1)},\ldots,x_s^{-1}y_sx_{\sigma^{-1}(s)};\sigma) \\ 
    &= \frac{1}{r^s} \sum_{\substack{x_1,\ldots,x_s \in S_r \\ (\forall i)\, x_i^{-1}y_ix_{\sigma^{-1}(i)} \in \ZZ_r}} \omega(x_1^{-1}y_1x_{\sigma^{-1}(1)},\ldots,x_s^{-1}y_sx_{\sigma^{-1}(s)};\sigma) \\
    &= \frac{1}{r^s} \sum_{\substack{x_1,\ldots,x_s \in S_r \\ (\forall i)\, x_i^{-1}y_ix_{\sigma^{-1}(i)} \in \ZZ_r}} \prod_{i=1}^{s} \zeta(x_i^{-1}y_ix_{\sigma^{-1}(i)}).
\end{align*}

The factors in the product over $i$ are complex numbers, thus commute. We can therefore rearrange them in an order fitting the decomposition of $\sigma^{-1} \in S_s$ into disjoint cycles: if
\[
    \sigma^{-1} = C_1 \cdots C_t
    %= \prod_{k=1}^{c} (i_1^{(k)}, \ldots, i_{n_k}^{(k)})
\]
is a product of $t$ disjoint cycles, choose an element $a_k$ in each cycle $C_k$. Then
\[
    C_k = (a_k, \sigma^{-1}(a_k), \sigma^{-2}(a_k), \ldots)
    \qquad (1 \le k \le t).
\]
Since $\zeta$ is a linear character, cancellation gives
\begin{align*}
    \prod_{i \in C_k} \zeta(x_i^{-1}y_ix_{\sigma^{-1}(i)})
    &= \zeta(x_{a_k}^{-1}y_{a_k}x_{\sigma^{-1}({a_k})})
    \zeta(x_{\sigma^{-1}({a_k})}^{-1}y_{\sigma^{-1}({a_k})}x_{\sigma^{-2}(a_k)})
    %\zeta(x_{\sigma^{-2}(a_j)}^{-1}y_{\sigma^{-2}(a_j)}x_{\sigma^{-3}(a_j)})
    \cdots \\
    &= \zeta(x_{a_k}^{-1}c_kx_{a_k}),
\end{align*}
where
\[
    c_k := y_{a_k} y_{\sigma^{-1}({a_k})} \cdots \in S_r
    \qquad (1 \le k \le t).
\]
%For $\sigma\in S_s$ let $\mathcal{O}_k = \{k,\sigma^{-1}(k),\ldots\} \subseteq \{1,\ldots,s\}$ denote the cycle of $\sigma^{-1}$ indexed by an arbitrary element $k$ from the cycle; %and 
%for $y_1,\ldots,y_s \in S_r$ let $c_k = y_k y_{\sigma^{-1}(k)} \cdots$ denote the corresponding product. 
%So, if $x_i^{-1}y_ix_{\sigma^{-1}(i)} \in \ZZ_r$ for every $i \in \mathcal{O}_k$, then 
%\[
%    \prod_{i\in \mathcal{O}_k} \zeta(x_i^{-1}y_ix_{\sigma^{-1}(i)})
%    = \zeta(x_k^{-1}c_k x_{k}).
%\]
		 
%The value of the induced character on an arbitrary $(y_1,\ldots,y_s,\sigma)\in K=S_r\wr S_s$ is thus
%\begin{equation}\begin{split}\label{char_value}
%    \phi(y_1,\ldots,y_s,\sigma)
%    &= \frac{1}{r^s} \sum_{x_1,\ldots,x_s\in S_{r}}
%    \omega^{0}(x_1^{-1}y_1x_{\sigma^{-1}(1)},\ldots,x_s^{-1}y_sx_{\sigma^{-1}(s)},\sigma) \\
%    &= \frac{1}{r^s} \sum_{\substack{\forall i: \\ x_i^{-1}y_ix_{\sigma^{-1}(i)}\in \ZZ_r}}
%    \prod \zeta(x_i^{-1}y_ix_{\sigma^{-1}(i)}) \\
%    &= \frac{1}{r^s} \sum_{\substack{\forall i:\\ x_i^{-1}y_ix_{\sigma^{-1}(i)}\in \ZZ_r}}
%    \prod_{k \in \Gamma} \zeta(x_k^{-1}c_k x_{k}),
%\end{split}\end{equation}
%where $\Gamma$ is the set of representatives of the cycles in $\sigma$.
		
The condition $x_i^{-1}y_ix_{\sigma^{-1}(i)} \in \ZZ_r\, (\forall i)$ implies that the products $x_{a_k}^{-1}c_kx_{a_k}\in \ZZ_r\, (\forall k)$. 
Hence if $\phi(y) \ne 0$ then, necessarily, each cycle-product $c_k \in S_r$ must be conjugate to an element of $\ZZ_r$.
Since $\ZZ_r \le S_r$ is generated by a full cycle, 
a necessary and sufficient condition for $c_k$ to have a conjugate in $\ZZ_r$ is that it is a product of disjoint cycles of the same length.

For any divisor $d$ of $r$,
if $c_k \in S_r$ is a product of disjoint $d$-cycles
%where $d=o(c_k)$ 
and $x_{a_k} \in S_r$ is such that $x_{a_k}^{-1}c_kx_{a_k} \in \ZZ_r$, 
then the value of $\zeta(x_{a_k}^{-1}c_kx_{a_k})$ is a primitive $d$-th root of unity. By varying the conjugating element $x_{a_k}$, each element of $\ZZ_r$ of order $d$ is obtained with the same multiplicity $|Z_{(d^{r/d})}| = (r/d)! d^{r/d}$.  
The other $x_i$'s, for $i \in C_k \setminus \{a_k\}$, are arbitrary, as long as $x_i^{-1}y_ix_{\sigma^{-1}(i)}\in \ZZ_r\,(\forall i)$. There are $r^{\ell_k - 1}$ such choices, where $\ell_k$ is the length of the cycle $C_k$.
We conclude that, for any $y \in K$ for which $c_k \in S_r$ is a product of disjoint $d_k$-cycles $(1 \le k \le t)$,
\[
    \phi(y) = \frac{1}{r^s} \prod_{k=1}^{t} 
    (r/d_k)! d_k^{r/d_k} r^{\ell_k - 1} 
    \sum_{z \in \ZZ_r \,:\, o(z) = d_k} \zeta(z).
\]
If $g$ is a generator of $\ZZ_r$, then $o(g^m) = d$ if and only if $m = jr/d$ for some integer $j$ coprime to $d$. 
It follows that
\[
    \sum_{z \in \ZZ_r \,:\, o(z) = d} \zeta(z) 
    = \sum_{0 \le j < d \,:\, (j,d) = 1} \zeta(g^{jr/d})
    = \mu(d).
\]
Since $\sum_{k=1}^{t} (\ell_k - 1) = s-t$,
we now have an explicit formula for the values of $\phi$:
%\[
%    \phi(y) 
%    = \frac{1}{r^t} \prod_{k=1}^{t} (r/d_k)! d_k^{r/d_k} r^{\ell_k - 1} \mu(d_k).
%\]
\begin{equation}\label{eq:char_value}
	\phi(y) 
	= \begin{cases}\displaystyle
		\frac{1}{r^{t}} \prod_{k=1}^{t} \mu(d_k) d_k^{r/d_k} (r/d_k)!, 
		%&\text{if a conjugate of } c_k \text{ is in }\ZZ_r \text{ for each } k; \\
		&\text{if } c_k \text{ is a product of disjoint } d_k \text{-cycles } (\forall k); \\
		0, &\text{otherwise.}
	\end{cases}
\end{equation}

To determine the inner product $\langle \phi\downarrow_{R\wr S_{s}}^K,\nu\rangle$ we evaluate the linear character $\nu$ on $R\wr S_{s}$.
Let $(v_1z_1,\ldots, v_sz_s; \sigma)\in R\wr S_s$, 
where $v_i\in S_j, z_i\in S_{r-j}$, $1\le i\le s$, and $\sigma\in S_s$. Then, by the definition of $\nu$
\[
\nu(v_1z_1,\ldots, v_sz_s; \sigma)
= \sgn(\sigma)^j \prod\limits_{i=1}^s \varepsilon(v_i)
= \sgn(\sigma)^j  \varepsilon(v_1,\ldots,v_s;1),
\]
%\todo{RA:
%Basically, the power of $\sgn(\sigma)$ is $j^2$, but of course power $j$ is equivalent.
%}
where $\sgn(\sigma)$ denotes the sign of $\sigma \in S_s$.
We obtain
\begin{align*}
	\langle \phi\downarrow_{R\wr S_{s}}^K,\nu\rangle
	&= \frac{1}{|R\wr S_{s}|}
	\sum_{(v_1z_1,\ldots,v_sz_s;\sigma) \in R\wr S_{s}} \nu(v_1z_1,\ldots,v_sz_s;\sigma) \phi(v_1z_1,\ldots,v_sz_s;\sigma) \\
	%&= \frac{1}{s!} 
	%\sum_{\sigma\in S_s} \sgn(\sigma)^j \frac{1}{(j!)^s} \sum_{v_1,\ldots ,v_s\in S_j} \varepsilon(v_1,\ldots,v_s;1) \frac{1}{(r-j)!^s} \sum_{z_1,\ldots, z_s \in S_{r-j}} \phi(v_1z_1,\ldots,v_sz_s;\sigma) \\
	&= \frac{1}{s!} \sum_{\sigma\in S_s} \sgn(\sigma)^j
	\frac{1}{(j!(r-j)!)^s} \sum_{\substack{v_1,\ldots ,v_s \in S_j \\ z_1,\ldots, z_s \in S_{r-j}}} \varepsilon(v_1,\ldots,v_s;1) \phi(v_1z_1,\ldots,v_sz_s;\sigma).
    %&= \frac{1}{s!} \sum_{\sigma \in S_s} \sgn(\sigma)^j \frac{1}{(j!(r-j)!)^{\cyc((\sigma)}} \left( \sum_{d|(r,j)} \frac{\mu(d)d^{r/d}(r/d)!}{r} (-1)^{j+j/d} n_d \right)^{\cyc(\sigma)},
\end{align*}
%To calculate the last sum in the RHS, we 
%Now use Equation~\eqref{eq:char_value}. 
%This sum is supported by 
%First notice that 
For each nonzero summand and each cycle $C_k$ of $\sigma^{-1}$,
by Equation~\eqref{eq:char_value},
the cycle-product $c_k \in R \le S_r$ has cycle type $d_k^{r/d_k}$, and its restrictions to $S_j$ and $S_{r-j}$ have cycle types $d_k^{j/{d_k}}$ and $d_k^{(r-j)/{d_k}}$, respectively. 
In particular, $d_k | (r,j)$.
%Thus, if $c_k$ has order $d_k$ then $v_kv_{\sigma^{-1}(k)}\cdots$ is a product of $j/d_k$ cycles of length $d_k$ and 
It follows that
\[
    \varepsilon(v_1,\ldots,v_s;1)
    = \prod_{k=1}^{t} (-1)^{(d_k+1)j/d_k}. %= (-1)^{j + j/d_k}.
\]
%\[
%\sum_{z_1,\ldots, z_s \in S_{r-j}} \phi(v_1z_1,\ldots,v_sz_s;\sigma)=
%\]
%
%\todo{Old:\\		
%Now we determine the inner product over $R\wr S_s$. Let $y_i=v_iz_i\in R=S_j\times S_{r-j}\leq S_r$ with $v_i\in S_j$ and $z_i\in S_{r-j}$. Given $\mathcal{O}_k$ and $d|r$, the cycle-product $c_k=v_kz_kv_{\sigma^{-1}(k)}z_{\sigma^{-1}(k)}\cdots$ can be conjugated into $\ZZ_r$ and is of order $d$ if and only if  both corresponding cycle-products  $v_kv_{\sigma^{-1}(k)}\cdots$ and $z_kz_{\sigma^{-1}(k)}\cdots$ are products of disjoint $d$-cycles. In particular, $d$ divides $j$, otherwise the character value is $0$.
%}		
For any common divisor $d$ of $r$ and $j$, 
let $n_d$ denote the number of elements of $R$ which are products of disjoint $d$-cycles. 
If $C_k$ 
%$\mathcal{O}_k$ 
has length $\ell_k$, then 
the cycle-product $c_k$ can be a product of disjoint $d_k$-cycles in exactly $(j!(r-j)!)^{\ell_k-1}n_{d_k}$ ways.
%In all these cases, $c_k$ has a conjugate in $\ZZ_r$. 
For different cycles $C_k$ the choices of the respective $d_k$ are independent; the only restriction is $d_k|(r,j)$. 

%\todo{Old:\\		
%The $S_{js}$-component of $(1,\ldots,1,\sigma)$ is a product of $j$ cycles of length $\ell_k$ for each cycle $\mathcal{O}_k$. So the sign of this $S_{js}$-component is $\nu(1,\ldots,1,\sigma)=\sgn(\sigma)^j$, where, to distinguish,  $\sgn(\sigma)$ denotes the sign of $\sigma$ in $S_s$. Similarly, $\nu(y_1,\ldots,y_s,1)=\varepsilon(v_1,\ldots,v_s,1)$ is the product of  the signs of $v_kv_{\sigma^{-1}(k)}\cdots$ in $S_j$ for all $\mathcal{O}_k$. If $c_k$ has order $d_k$ then $v_kv_{\sigma^{-1}(k)}\cdots$ is a product of $j/d_k$ cycles of length $d_k$ so its sign is $(-1)^{(d_k-1)j/d_k} = (-1)^{j + j/d_k}$.
%}		
Using again $\sum_{k=1}^{t} (\ell_k - 1) = s-t$ and Equation~\eqref{eq:char_value}, 
and denoting by $t = \cyc(\sigma)$ the number of cycles of $\sigma$, we obtain 
\begin{align*}
	\langle \phi\downarrow_{R\wr S_{s}}^K,\nu \rangle
	%&= \frac{1}{|R\wr S_{s}|}
	%\sum_{(v_1z_1,\ldots,v_sz_s,\sigma) \in R\wr S_{s}} \nu(v_1z_1,\ldots,v_sz_s,\sigma) \phi(v_1z_1,\ldots,v_sz_s,\sigma) \\
	%&= \frac{1}{s!} 
	%\sum_{\sigma\in S_s} \sgn(\sigma)^j \frac{1}{(j!)^s} \sum_{v_1,\ldots \in S_j} \varepsilon(v_1,\ldots,v_s,1) \frac{1}{(r-j)!^s} \sum_{z_1,\ldots \in S_{r-j}} \phi(v_1z_1,\ldots,\sigma) \\
	&= \frac{1}{s!} \sum_{\sigma\in S_s} \sgn(\sigma)^j
	\frac{1}{(j!(r-j)!)^s} \sum_{\substack{v_1,\ldots ,v_s \in S_j \\ z_1,\ldots, z_s \in S_{r-j}}} \varepsilon(v_1,\ldots,v_s;1) \phi(v_1z_1,\ldots,v_sz_s;\sigma) \\
	&= \frac{1}{s!} \sum_{\sigma\in S_s} \sgn(\sigma)^j
	\frac{1}{(j!(r-j)!)^{\cyc(\sigma)}} \sum_{d_1,\ldots, d_{\cyc(\sigma)} | (r,j)}\\
	&\, \qquad\qquad\qquad\qquad\qquad\qquad
	\frac{1}{r^{\cyc(\sigma)}} \prod_{k=1}^{\cyc(\sigma)} n_{d_k} (-1)^{(d_k+1)j/d_k} \mu(d_k) d_k^{r/d_k} (r/d_k)! \\
	&= \frac{1}{s!} \sum_{\sigma \in S_s} \sgn(\sigma)^j
	\frac{1}{(j!(r-j)!)^{\cyc(\sigma)}} \left( \sum_{d|(r,j)} \frac{\mu(d)d^{r/d}(r/d)!}{r} (-1)^{(d+1)j/d} n_d \right)^{\cyc(\sigma)}.
\end{align*}
%where $\cyc(\sigma)$ denotes the number of cycles of $\sigma$.
Of course,
\[
    n_d
    = \frac{j!}{d^{j/d} (j/d)!} \cdot \frac{(r-j)!}{d^{(r-j)/d} ((r-j)/d)!}
    = \frac{j!(r-j)!}{d^{r/d}(j/d)!((r-j)/d)!}.
\]
Putting everything together, we obtain
\begin{align*} %\label{homogeneous_inner_product1}
	\langle \phi\downarrow_{R \wr S_{s}}^K,\nu \rangle
	&= \frac{1}{s!} \sum_{\sigma \in S_s} \sgn(\sigma)^j 
	\left( \sum_{d|(r,j)} \frac{\mu(d)(-1)^{(d+1)j/d}}{r} \binom{r/d}{j/d} \right)^{\cyc(\sigma)} \\
	&= \frac{1}{s!} \sum_{\sigma \in S_s} \sgn(\sigma)^j f_j(r)^{\cyc(\sigma)},
\end{align*}
by Definition~\ref{def:f}.
		
In particular, if $s=1$ then $f = \langle \phi\downarrow_R^K,\nu \rangle$
% as also implied by Proposition~\ref{prop:k-subsets mod r} and Lemma~\ref{lem:Kra-Wey}.
is a non-negative integer.
		
Finally, by~\cite[Proposition 1.3.4]{EC1}, for any $s \ge 0$ and indeterminate $x$, 
\[
    \frac{1}{s!} \sum_{\sigma \in S_s} x^{\cyc(\sigma)}
    = \binom{x+s-1}{s}.
\]
Substituting $-x$ for $x$ and noting that 
$\sgn(\sigma) = (-1)^{s-\cyc(\sigma)}$, we get
\[
    \frac{1}{s!} \sum_{\sigma \in S_s} \sgn(\sigma) x^{\cyc(\sigma)} 
    = \binom{x}{s}.
\]
%The first 
%\[\frac{1}{s!}\sum_{\sigma\in S_s}\varepsilon(\sigma) a^{\cyc(\sigma)}=
%\frac{1}{s!}\sum_{\sigma\in S_s}\varepsilon(\sigma) \sum_{\overset{b_1,\ldots, b_a}{b_{i\sigma}=b_i\forall i}}1=
%\sum_{b_1,\ldots,b_s}\frac{1}{s!}\sum_{\overset{\sigma}{b_{i\sigma}=b_i\forall i}}\varepsilon(\sigma)=
%\sum_{b_i\ne b_j}1=\binom{a}{s}
%\]
%The second is equal to the number of incongruent ways to a-colour a complete graph of s vertices.
%Equation~\eqref{homogeneous_inner_product1} 
This yields the desired formula,
depending on the parity of $j$,
for $\langle \phi\downarrow_{R \wr S_{s}}^K,\nu \rangle$.
%, we obtain the desired \eqref{homogeneous_inner_product2}.
\end{proof}

%Let $n=rs$, $K=K_{r,s}\cong S_r\wr S_s\leq S_n$. 
To prove Theorem~\ref{prop:e_i_formuli} we need a final ingredient, a combinatorial parametrization of 
$(S_r\wr S_s,S_i\times S_{rs-i})$ double cosets of $S_{rs}$  by partitions.

Recall Definition~\ref{def:partition}. 
The idea and definition of $P_{r,s}(i)$ actually appear
already in~\cite{Giannelli}, 
in a similar context but without explicit reference to double cosets or Mackey's formula; 
see Definitions~2.8--2.10 and Proposition~2.11 there.

An example of a partition of $13$ representing a certain $(S_6\wr S_5,S_{13}\times S_{17})$ double coset appears in Figure~\ref{fig:subpart}.

\begin{lemma}\label{lemma:parametrization}
    Let $n = rs$, $K = K_{r,s} \cong S_r \wr S_s \le S_n$.
    There is a bijection between the $(K,S_i\times S_{n-i})$ double cosets of $S_n$ and $P_{r,s}(i)$, the set of partitions of $i$ into at most $s$ parts, all of size at most $r$. 
\end{lemma}

\begin{proof}
To describe the bijection from $K\backslash S_n/(S_i \times S_{n-i})$ to $P_{r,s}(i)$ explicitly,
first fix the underlying decomposition $\{1, \ldots, n\} = \{1,\ldots,i\} \cup \{i+1,\ldots, n\}$ for the action of $S_i\times S_{n-i}$. 
The left cosets in $S_n/(S_i \times S_{n-i})$ are clearly in bijection with the subsets of size $i$ in $\{1, \ldots, n\}$:
\[
g(S_i \times S_{n-i}) \longleftrightarrow g(\{1, \ldots, i\}).
\]
Now fix a decomposition for the action of $K \cong S_r \wr S_s$: 
$\{1, \ldots, n\} = B_1 \cup \ldots \cup B_s$,
where $B_j := \{(j-1)r+1,(j-1)r+2,\ldots,(j-1)r+r\}$ $(j = 1,\ldots,s)$. 
The elements of $S_s$ permute these blocks,
and each of the $s$ copies of $S_r$ acts on one of the blocks.
Given $g \in S_n$, we map the double coset $K g (S_i \times S_{n-i})$ to the partition $\gamma$ which is the non-increasing rearrangement of the sequence
\[
(\card{B_1 \cap g(\{1,\ldots,i\})}, \ldots, \card{B_s \cap g(\{1,\ldots,i\})}).
\]
This sequence consists of $s$ non-negative integers, each at most $r$, which sum up to $i$. Thus $\gamma \in P_{r,s}(i)$.
We will show that this map is a bijection.

For arbitrary $x\in K$ and $y\in S_i\times S_{n-i}$ we have 
%\[
%\card{\{1,\ldots,i\}^{y^{-1}gx^{-1}}\cap B_j}=\card{\{1,\ldots,i\}^{g}\cap B_j^x},
%\]
\[
\card{B_j \cap xgy(\{1,\ldots,i\})} 
= \card{B_j \cap xg(\{1,\ldots,i\})}
= \card{x^{-1}(B_j) \cap g(\{1,\ldots,i\})}
%= \card{B_{\pi(j)} \cap g(\{1,\ldots,i\})},
\qquad (1 \le j \le s).
\]
The element $x^{-1} \in K = S_r \wr S_s$ permutes the blocks and permutes the elements of each block. This shows that the mapping $Kg(S_i \times S_{n-i}) \mapsto \gamma$ is well defined.

The mapping from $K\backslash S_n/(S_i \times S_{n-i})$ to $P_{r,s}(i)$ is clearly onto: 
for each $\gamma = (a_1,\ldots,a_s) \in P_{r,s}(i)$ there exists a permutation $g \in S_n$ such that  $\card{B_j \cap g(\{1,\ldots,i\})} = a_j$ $(1 \le j \le s)$. %Then  $\gamma$ is the image of the double coset $Kg^{-1}S_i\times S_{n-i}$.
		
Finally, if $K g_1 (S_i \times S_{n-i})$ and $K g_2 (S_i \times S_{n-i})$ are mapped to the same partition $\gamma$, then there exists a permutation $\pi \in S_s$ satisfying
\[
\card{B_j \cap g_1(\{1,\ldots,i\})}
= \card{B_{\pi(j)} \cap g_2(\{1,\ldots,i\})}
\qquad (1 \le j \le s).
\]
Therefore there exist an element $x \in K = S_r \wr S_s$ such that
\[
B_j \cap g_1(\{1,\ldots,i\})
= B_j \cap xg_2(\{1,\ldots,i\})
\qquad (1 \le j \le s).
\]
It follows that
\[
g_1(\{1,\ldots,i\}) = xg_2(\{1,\ldots,i\}),
\]
and therefore $g_1 = xg_2y$ for a suitable permutation $y \in S_i \times S_{n-i}$.
\end{proof}

We are now ready to prove Theorem~\ref{prop:e_i_formuli}. 
The proof applies Lemma~\ref{lemma:parametrization} and the explicit bijection described in its proof, combined with Lemma~\ref{lem:char_values_and_inner_product}. 

\begin{proof}[Proof of  Theorem~\ref{prop:e_i_formuli}.]
Recall that $n=rs$, 
$K=K_{r,s}\cong S_r\wr S_s\leq S_n$ and 
%the higher Lie character
$\psi = \phi\uparrow_K^{S_n}$. 
By Frobenius reciprocity (twice) and 
Mackey's formula~\cite[(5.2), Problem (5.6)]{Isaacs},
\begin{equation}\label{Mackey_sum}%
\begin{split}
    e_i 
    &= \langle \psi, \chi^{(1^i) \oplus (n-i)} \rangle 
    = \langle \phi\uparrow_K^{S_n}, (\varepsilon_{S_i} \times 1_{S_{n-i}})\uparrow_{S_i \times S_{n-i}}^{S_n} \rangle \\
    &= \langle {\phi\uparrow_K^{S_n}}\downarrow_{S_i \times S_{n-i}}^{S_n}, \varepsilon_{S_i}\times 1_{S_{n-i}} \rangle \\
    &= 
    %\sum_{S_n=\cup Kg^{-1}(S_i\times S_{n-i})}
    %\langle(\phi\downarrow_{K\cap  S_i^{g}\times S_{n-i}^{g}})\uparrow^{(S_i^{g}\times S_{n-i}^{g})},\varepsilon_{S_i^{g}}\times 1_{S_{n-i}^{g}}\rangle \\
    \sum_{[g] \in K \backslash S_n / (S_i \times S_{n-i})}
    \langle \phi^g\downarrow^{K^g}_{K^g \cap (S_i \times S_{n-i})}\uparrow_{K^g \cap (S_i \times S_{n-i})}^{S_i \times S_{n-i}}, \varepsilon_{S_i} \times 1_{S_{n-i}} \rangle \\
    &=
    %\sum_{g}\langle\phi\downarrow_{K\cap  S_i^{g}\times S_{n-i}^{g}},(\varepsilon_{S_i^{g}}\times 1_{S_{n-i}^{g}})\downarrow_{K\cap  S_i^{g}\times S_{n-i}^{g}}\rangle.\end{split}
    \sum_{[g] \in K \backslash S_n / (S_i \times S_{n-i})}
    \langle \phi^g\downarrow^{K^g}_{K^g \cap (S_i \times S_{n-i})}, (\varepsilon_{S_i} \times 1_{S_{n-i}})\downarrow^{S_i \times S_{n-i}}_{K^g \cap  (S_i \times S_{n-i})} \rangle.
\end{split}
\end{equation}
		
The above sums are indexed by the $(K,S_i\times S_{n-i})$ double cosets in $S_n$. 
For each representative $g$ of a double coset, 
$K^g := g^{-1}Kg$ is the corresponding conjugate of $K$ 
and $\phi^g$ is the character on $K^g$ defined by
$\phi^g(g^{-1}kg) := \phi(k)$ for all $k \in K$.

%We will now show that these double cosets can be parametrized by $P_{r,s}(i)$. 

By Lemma~\ref{lemma:parametrization}, 
%there is 
%Having shown 
%a bijection between 
these double cosets are parametrized by the partitions in $P_{r,s}(i)$.  Let us determine the summand of \eqref{Mackey_sum} %$\phi$ on $K\cap S_i^g\times S_{n-i}^g$ 
corresponding to a partition $\gamma\in P_{r,s}(i)$ in which part $j$ occurs with multiplicity $k_j = k_j(\gamma)$ $(0 \le j \le r)$. 
By the bijection described in the proof of Lemma~\ref{lemma:parametrization}, 
\[
    k_j = \card{\{t \,:\, \card{B_t \cap g(\{1, \ldots, i\})} = j\}}
    \qquad (0 \le j \le r),
\]
where $B_t := \{(t-1)r+1,\ldots,(t-1)r+r\}$ $(1 \le t \le s)$. Thus
\[
    K^g \cap (S_i \times S_{n-i})
    \cong (R_{r,0} \wr S_{k_0}) \times (R_{r,1} \wr S_{k_1}) \times \cdots \times (R_{r,r}\wr S_{k_r}),
\]
where $R_{r,j} = S_j \times S_{r-j}$, as above. In particular,
\begin{align*}
    (\varepsilon_{S_i} \times 1_{S_{n-i}})\downarrow^{S_i \times S_{n-i}}_{K^g \cap (S_i \times S_{n-i})}
    %&= \bigotimes_j (\varepsilon_{S_{i}} \times 1_{S_{n-i}})\downarrow^{S_i \times S_{n-i}}_{R_{r,j}\wr S_{k_j}} \\
    &= \bigotimes_j (\varepsilon_{S_{jk_j}} \times 1_{S_{(r-j)k_j}})\downarrow^{S_{jk_j} \times S_{(r-j)k_j}}_{R_{r,j}\wr S_{k_j}}
    = \bigotimes_j \nu_{r,j,k_j}.
\end{align*}
Note that, by Observation~\ref{obs:phi_product}, $\phi_{r,s}$ factors similarly.
Therefore the corresponding summand in~\eqref{Mackey_sum} is
%\todo{RA: Double check the following:
\begin{equation}\label{Mackey_summand}
%\begin{split}
    %&\ 
    \langle \phi^g\downarrow^{K^g}_{K^g \cap (S_i \times S_{n-i})},
    (\varepsilon_{S_i} \times 1_{S_{n-i}})\downarrow^{S_i \times S_{n-i}}_{K^g \cap (S_i \times S_{n-i})} \rangle %\\
    %&= \prod_{j=0}^{r} \langle \phi_{r,k_j}\downarrow^{K_{r,k_j}}_{R_{r,j}\wr S_{k_j}},
    %(\varepsilon_{S_i^{g}}\times 1_{S_{n-i}^{g}}) \downarrow_{R_{r,j}\wr S_{k_j}}\rangle \\
    %&
    = \prod_{j=0}^{r} \langle \phi_{r,k_j}\downarrow^{S_r\wr S_{k_j}}_{R_{r,j}\wr S_{k_j}}, \nu_{r,j,k_j} \rangle.
%\end{split}
\end{equation}

%where $\nu_{r,j,s} := (\varepsilon_{S_{js}} \times 1_{S_{(r-j)s}}) \downarrow_{R_{r,j}\wr S_s}^{S_{js} \times S_{(r-j)s}}$.
%}		
%			\begin{equation}\label{Mackey_summand}
%		\langle\phi\downarrow_{K\cap S_i^g\times S_{n-i}^g},
%		(\varepsilon_{S_i}\times1_{S_{n-i}})\downarrow_{K\cap S_i\times S_{n-i}}\rangle=
%		\prod_j \langle\phi^{(k_j)}\downarrow_{R_{r,j}\wr S_{k_j}},
%		(\varepsilon_{S_i^{g}}\times 1_{S_{n-i}^{g}})\downarrow_{R_{r,j}\wr S_{k_j}}\rangle,
%		\end{equation}
%		where $\phi^{(k_j)}$ denotes the  character of $S_r\wr S_{k_j}$ analogous to $\phi$ of $S_r\wr S_s$.
		
We have already computed these inner products in Lemma~\ref{lem:char_values_and_inner_product}.
By \eqref{Mackey_sum}, \eqref{Mackey_summand}, %\eqref{homogeneous_inner_product2} 
Lemma~\ref{lem:char_values_and_inner_product}
and Definition~\ref{def:f} we obtain
\[
    e_{i,(r^s)} 
    = \langle \phi\uparrow_K^{S_n}, (\varepsilon_{S_i} \times 1_{S_{n-i}})\uparrow_{S_i \times S_{n-i}}^{S_n} \rangle
    =
    %\sum_{\substack{\sum k_j=s\\ \sum jk_j=i}}\prod_j \binom{f_j\;+\delta_{2|j}(k_j-1)}{k_j}
    \sum_{\gamma\in P_{r,s}(i)} \prod_{j=0}^{r} %\binom{f_j\;+(k_j(\gamma)-1)\delta_{2|j}}{k_j(\gamma)}.
    (-1)^{(j+1)k_j(\gamma)} \binom{(-1)^{j+1} f_j(r)}{k_j(\gamma)},
\]
as claimed.
If $s=1$ then $P_{r,1}(i)$ contains (for $0 \le i \le r$) 
a unique partition $\gamma = (i)$, for which $k_i(\gamma) = 1$ is the unique nonzero multiplicity. Thus, in this case,  
\[
    e_{i,(r)} 
    = (-1)^{i+1} \binom{(-1)^{i+1}f_i(r)}{1} 
    = f_i(r).
\]
\end{proof}

\subsection{A product formula}
\label{sec:prod}
	
Now we derive a restatement of Theorem~\ref{prop:e_i_formuli} as a product formula for formal power series. 
%For small $r$ and arbitrary $s$ it enables us to determine explicitly the coefficients $e_{i,(r^s)}$ hence the coefficients $m_{i,(r^s)}$ as well.

\begin{corollary}\label{power_series_form}
For a positive integer $r$, define the formal power series
\[
E_r(x,y) := \sum_{i,s \ge 0} e_{i,(r^s)} x^i y^s.
\]
%let $E_r(x,y)$ denote the formal power series in which the coefficient of $x^i y^s$ is 
%$e_{i,(r^s)} = \langle \psi^{(r^s)}, \chi^{(1^i)\oplus(n-i)}
%(\varepsilon_{S_i}\times 1_{S_{n-i}})\uparrow^{S_n}
%\rangle$. 
Then
\[
%\begin{align}\label{compact}
E_r(x,y) 
= \prod_{j=0}^{r} (1 - (-x)^j y)^{(-1)^{j+1}f_j(r)}.
%=\exp\left(\sum_{j=1}^\infty \frac{F(-(-x)^j)}{j}y^j\right).
%\end{align}
\]
\end{corollary}
	
\begin{proof}
Recall the following formal power series expansion, valid for any integer $f$:
\[
(1+t)^{f} = \sum_{n=0}^\infty \binom{f}{n} t^n.
\]
%Note that if $j>r$ then $f_j(r) = 0$. 
By Theorem~\ref{prop:e_i_formuli},
with the obvious extension for $s=0$,
\begin{align*}
E_r(x,y) 
&=
%\sum_{s,i}\left(\sum_{\gamma\in P_{r,s}(i)}\prod_{k_j>0} \binom{f_j\;+(k_j(\gamma)-1)\delta_{2|j}}{k_j(\gamma)}\right)x^iy^s=\\
%&=
\sum_{i,s \ge 0} \left(
\sum_{\substack{k_0,\ldots,k_r \ge 0 \\ \sum_j k_j=s \\ \sum_j jk_j = i}} 
\prod_{j = 0}^{r}
%\binom{f_j\;+(k_j-1)\delta_{2|j}}{k_j}
(-1)^{(j+1)k_j} \binom{(-1)^{j+1} f_j(r)}{k_j}
\right) x^i y^s \\
&= \prod_{j=0}^{r} \sum_{k_j=0}^\infty
%\binom{f_j\;+(k_j-1)\delta_{2|j}}{k_j}
(-1)^{(j+1)k_j} \binom{(-1)^{j+1} f_j(r)}{k_j}
x^{jk_j} y^{k_j} \\
&= \prod_{j=0}^{r} (1 + (-1)^{j+1} x^j y)^{(-1)^{j+1}f_j(r)},
\end{align*}
%is the first product form.
%To derive the other, it is more convenient to work with the logarithm.
%\begin{align*}\log E(x,y)&=\sum_j(-1)^{j+1}f_j\log (1-(-1)^{j}x^jy)=\sum_j f_j\sum_i (-1)^{ij+j}\frac{x^{ji}y^i}{i}=\\
%&\sum_i\frac{y^i}{i}\sum_jf_j((-1)^{i+1}x^i)^j=\sum_i\frac{y^i}{i}F(-(-x)^i),
%\end{align*}
as required.
\end{proof}
	
\begin{remark}\label{rem:le_3}
For small $r$ and arbitrary $s$, Corollary~\ref{power_series_form}  enables us to determine explicitly the hook-multiplicities $m_{i,(r^s)}$. 
%as well. 
%%Corollary~\ref{power_series_form} has the following implications for %%small values of $r$.
%By  Corollary~\ref{power_series_form} together with Observation~\ref{obs:f_values}, $E_1(x,y)=(1-y)^{-1}(1+xy)$.  
%The value of $e_{i,(1^s)}$, equivalently the coefficient of $x^iy^s$,  is 1 for $i=0,1$ and zero otherwise. 
%Thus, by Equation~\eqref{eq:alternating}, 
%%Since $e_{i,(r^s)}=m_{i-1,(r^s)}+m_{i,(r^s)}$, we have   
%$m_{i,(1^s)}=1$ for $i=0$ and zero otherwise.
%Similarly
This is done by recalling Equation~\eqref{eq:alternating} and the fact that, by definition, $e_{i,(r^s)}$ is the coefficient of $x^i y^s$ in $E_r(x,y)$. 
For example, by Corollary~\ref{power_series_form} and Observation~\ref{obs:f_values}, 
$E_2(x,y) = E_3(x,y) = (1+xy)(1-x^2y)^{-1}$. 
Thus, for $r \in \{2,3\}$ and any $s \ge 1$, the value of $e_{i,(r^s)}$ is $1$ for $i \in \{2s-1,2s\}$ and zero otherwise. 
Combining this with Equation~\eqref{eq:alternating}, 
it follows that, for $r \in \{2,3\}$ and $s \ge 1$, the hook multiplicity $m_{i,(r^s)}$ 
is 1 for $i = 2s-1$ and zero otherwise. 
%For $r=1$ the only hook constituent of the trivial character is, unsurprisingly, the trivial character. 
%For $r=2$ the only hook constituent of $\omega^{S_{2s}}$ is the sign character $\chi^{2s-1}$, with multiplicity $1$. 
%For $r=3$ the only hook constituent of $\omega^{S_{3s}}$ is $\chi^{2s-1} = \chi^{(s+1,1^{2s-1})}$, with multiplicity $1$.
%For $r=4$,
%the only hook constituents of $\omega^{S_{4s}}$ are $\chi^{2s-1}$ and $\chi^{2s}$, both with multiplicity $s$.
%\todo{To be explained ??}
\end{remark}

\subsection{Non-negativity}\label{sec:nn}
		
Now we are ready to prove Theorem~\ref{t:main_hook-alternating2}.
	
\begin{proof}[Proof of Theorem~\ref{t:main_hook-alternating2}.]
Assume that $r$ is not square-free.
	Recall, from Definition~\ref{def:F_polynomial}, the notation $F_r(x) := \sum_{j=0}^{r} f_j(r) x^j$.
	By 	Proposition~\ref{prop:cycles_nonnegativity} and Corollary~\ref{t:F_and_M}	
    we may write $F_r(x) = (1+x)^2 G_r(x)$, where $G_r(x) = \sum_{j=0}^{r-2} g_j(r) x^j$ is a polynomial with non-negative integer coefficients. 
	Let $g_j(r) := 0$ for $j < 0$ or $j > r-2$. Then
	\[
	f_j(r) = g_j(r) + 2g_{j-1}(r) + g_{j-2}(r)
	\qquad (\forall j).
	\]
	%So $f_0=g_0,\,f_1=g_1+2g_0$, and $f_j=g_j+2g_{j-1}+g_{j-2}$ for $j>1$.
	%By Proposition~\ref{prop:cycles_nonnegativity}, 
	%\ref{prop:s=1_unimodality}
	%the coefficients $g_i=\sum_{i\leq j} (-1)^{i-j}m_j$ are non-negative.
	%
    %Equation~\eqref{compact} can be rewritten in terms of the $g_i$'s.
    Therefore, by Corollary~\ref{power_series_form},
    \begin{equation*}\begin{split}%\label{eq:factors}
        E_r(x,y) 
        &= \prod_{j \ge 0} (1-(-x)^j y)^{(-1)^{j+1}f_j(r)} \\
        &= \prod_{j \ge 0} (1-(-x)^j y)^{(-1)^{j+1}(g_j(r) + 2g_{j-1}(r) + g_{j-2}(r))} \\
        %&= \prod_{j\ge 0} (1 - (-x)^j y)^{(-1)^{j+1}g_j}
        %\prod_{j\geq 1} (1 - (-x)^j y)^{(-1)^{j+1}2g_{j-1}}
        %\prod_{j\geq 2} (1 - (-x)^j y)^{(-1)^{j+1}g_{j-2}} \\
        &= \prod_{j \ge 0} \left(
        \frac{(1 - (-x)^{j} y)(1 - (-x)^{j+2} y)}{(1 - (-x)^{j+1} y)^2} \right)^{(-1)^{j+1} g_j(r)}.
    \end{split}
    \end{equation*}
    We claim that each factor in %\eqref{eq:factors} 
    this product has the form $1 + (1+x)^2 p_j(x,y)$, with $p_j(x,y)$ a formal power series with non-negative integer coefficients. 
    This implies that $(E_r(x,y)-1)/(1+x)^2$ is itself a formal power series with non-negative integer coefficients, completing the proof of Theorem~\ref{t:main_hook-alternating2}.
	
    Indeed, if $j$ is odd then the corresponding factor is
    \[
        \left( \frac{(1 + x^{j} y)(1 + x^{j+2} y)}{(1 - x^{j+1} y)^2} \right)^{g_j(r)}
        = \left( 1 + \frac{(x+1)^2 x^j y}{(1 - x^{j+1} y)^2} \right)^{g_j(r)},
        %= \left(1+(x+1)^2x^iy(\sum_j x^{(i+1)j}y^j)^2\right)^{g_i},
    \]
    where
    \[
        \frac{x^j y}{(1 - x^{j+1} y)^2}
        = x^j y \cdot \sum_{i \ge 0} (i+1) (x^{j+1} y)^i
    \]
    is a formal power series with non-negative integer coefficients.
	
    Finally, if $j$ is even then the corresponding factor is
    \[
        \left( \frac{(1 + x^{j+1} y)^2}{(1 - x^{j} y)(1 - x^{j+2} y)} \right)^{g_j(r)}
        = \left( 1 + \frac{(x+1)^2 x^j y}{(1 - x^{j} y)(1 - x^{j+2} y)} \right)^{g_j(r)},
        %= \left(1+(x+1)^2x^iy(\sum_j x^{(i+1)j}y^j)^2\right)^{g_i},
    \]
    where
    \[
        \frac{x^j y}{(1 - x^{j} y)(1 - x^{j+2} y)}
        = x^j y \cdot \sum_{i \ge 0} (x^{j} y)^i \cdot \sum_{k \ge 0} (x^{j+2} y)^k
    \]
    is a formal power series with non-negative integer coefficients.
    %
    %Let $i$ be even.
    %Notice that for any $a$ we have
    %\begin{equation*}
    %	\frac{(1+a)(1+x^{2}a)}
    %	{(1-xa)^2}
	%= 
	%\frac{1+(x^2+1)a+x^{2}a^2}{1-2xa+x^{2}a^2}=
	%1+\frac{a(x+1)^2}
	%{(1-xa)^2}
    %	=\left(1+\frac{-a(x+1)^2}
    %	{(1+a)(1+x^{2}a)}\right)^{-1}.
    %\end{equation*}
    %	While, if $i$ is even, then we s
    %Substitute $a=-x^{i}y$ to the factor and get
    %\begin{align*}
    %	&\left(1+\frac{(x+1)^2x^{i}y}
    %	{(1-x^{i}y)(1-x^{i+2}y)}\right)^{g_i}=
    %	\left(1+(x+1)^2x^iy(\sum_j x^{ij}y^j)(\sum_j x^{(i+2)j}y^j)\right)^{g_i},
    %\end{align*}
    %which has non-negative coefficients, as well.
\end{proof}

\section{Additional results}
\label{sec:final}
%%%%%%%%%%%%%%%%%%%%%%%%%%%%%%%%%%%%%%%%%%%

%\bigskip
\subsection{Combinatorial identities}
\label{remarks_on_s=1}

In this subsection it will be shown that Lemma~\ref{lem:char_eval_cycles} and Theorem~\ref{prop:e_i_formuli} imply well-known combinatorial identities.

\medskip

Recall the major index of a permutation $\pi\in S_n$,
\[
\maj(\pi):=\sum\limits_{i\in \Des(\pi)}i.
\]
The following identity is due to 
%was first proved by 
Garsia~\cite{Garsia}. A purely combinatorial proof was given by Wachs~\cite{Wachs}.

\begin{proposition}\label{prop:Garsia}~\cite[Equation 5.8]{Garsia}
     For every partition $\lambda\vdash n$,
    %\begin{equation}\label{eq:Garsia}
    \[
        \sum_{\pi \in \C_\lambda} \zeta^{\maj(\pi)}
        = \begin{cases} 
            \mu(r), &\text{if } \lambda = (r^s); \\
    		0, &\text{otherwise },
		\end{cases}
        %\qquad (\forall \lambda\vdash n),
    \]
    %\end{equation}     
    where $\zeta$ is a primitive $n$-th root of unity and $\mu$ is the M\"obius function. 
\end{proposition}

The following lemma follows from~\cite{Stembridge}.

\begin{lemma}\label{prop:equiv_Garsia}
    For every Schur-positive set $\A\subseteq S_n$ with associated $S_n$-character $\phi := \ch^{-1}(\Q(\A))$, the value of $\phi$ at an $n$-cycle $c\in S_n$ is
    \[
        \phi(c) = \sum_{\pi \in \A} \zeta^{\maj(\pi)},
    \]
    where $\zeta$ is a primitive $n$-th root of unity. 
    %  For every partition $\lambda\vdash n$, the value of the higher Lie character $\psi^\lambda$ on $n$-cycle is    
    %      \begin{equation}
    %\psi^\lambda_{(n)}=\sum\limits_{\pi\in \C_\lambda}\zeta^{\maj(\pi)}.  %\qquad(\forall \mu\vdash n),
    %\end{equation}
\end{lemma}

\begin{proof}
%[First proof.]
%To prove Proposition~\ref{prop:equiv_Garsia} 
First, recall 
the definition of the descent set of standard Young tableaux (SYT) 
from Equation~\eqref{def:Des_SYT}. By 
%a well-known result of Stembridge
~\cite[Lemma 3.4]{Stembridge}, for every partition $\nu\vdash n$ the value of the irreducible $S_n$-character $\chi^\nu$ at an $n$-cycle $c\in S_n$ is 
\begin{equation}\label{eq:Stembridge}
\chi^\nu(c)=\sum\limits_{T\in \SYT(\nu)}\zeta^{\maj(T)}.
\end{equation}
%where $c\in S_n$ is an $n$-cycle and $\maj(T):=\sum\limits_{i\in \Des(T)}i$. 
Let $\A\subseteq S_n$ be Schur-positive with associated $S_n$-character $\phi$, i.e., 
$\Q(A)=\ch (\phi)$. 
By Lemma~\ref{lem:Schur-gf} 
together with Equation~\eqref{eq:Stembridge},
\[
    \sum_{\pi \in \A} \zeta^{\maj(\pi)}
    = \sum_{\nu \vdash n} \langle \Q(\A), s_\nu \rangle \sum_{T \in \SYT(\nu)} \zeta^{\maj(T)}
    = \sum_{\nu \vdash n} \langle \phi, \chi^\nu \rangle \chi^\nu(c)
    = \phi(c), 
\]
completing the proof.
%, the distribution of the descent set  over $\A$ is equal to its distribution over SYT of multiset of shapes. Combining this with Eq.~\eqref{eq:Stembridge}, it follows that for every Schur-positive set $\A$ with associated character $\phi$, the value of $\phi$ at an $n$-cycle $c$ satisfies  
%\[
%\phi(c)=\sum\limits_{\pi\in \A}\zeta^{\maj(\pi)}.
%\]
%We deduce that 
%\[
%\psi^\mu(c)=\sum\limits_{\pi\in \C_\mu}\zeta^{\maj(\pi)}, \qquad(\forall \mu\vdash n),
%\]
%namely, 
% the left hand sides of Equations~\eqref{eq:char_eval_cycles}
%and~\eqref{eq:Garsia} are equal, completing the proof.
\end{proof}

In light of Lemma~\ref{prop:equiv_Garsia} we deduce 

\begin{corollary}
    Lemma~\ref{lem:char_eval_cycles} is equivalent to Garsia's identity (Proposition~\ref{prop:Garsia}).
\end{corollary}

\begin{proof}
    By the Gessel-Reutenauer Theorem (Theorem~\ref{thm:GR}), for every $\lambda \vdash n$, 
    the conjugacy class of cycle type $\lambda$ is Schur-positive with $\ch^{-1}(\Q(\C_\lambda))=\psi^\lambda$. 
    Letting $\A = \C_\lambda$ in Lemma~\ref{prop:equiv_Garsia}, 
    Proposition~\ref{prop:Garsia} implies Lemma~\ref{lem:char_eval_cycles} and vice versa.
    %Combine Equations~\eqref{eq:char_eval_cycles}  and~\eqref{eq:Garsia}.
\end{proof}

%\begin{remark}
%      By the first proof,  
%     % the character evaluation in
%     Equation~\eqref{eq:char_eval_cycles} %above 
%    is equivalent to Equation~\eqref{eq:Garsia}.
%\end{remark}

\medskip

There is a combinatorial description, due to M.\ Schocker,
of the multiplicity of an arbitrary irreducible character of $S_n$ in the higher Lie character. 
In its full generality it is too complicated to be presented here, see~\cite{schocker} for details. 
The special case of a full cycle, $\lambda=(n)$ (for which
%$\omega$ denote a primitive character of $\ZZ_n\leq S_n$ and 
$\psi^{(n)} = \omega^{(n)}\uparrow_{\ZZ_n}^{S_n}$ is the Lie character), is due %originally 
to Kra\'skiewicz and Weyman~\cite{KraskiewiczWeyman}. %, is the following.  
%provides the following for hook constituents.
	
\begin{theorem}[Kra\'skiewicz-Weyman Theorem]\cite[Theorem 8.4]{Garsia}\label{thm:KW}
    For every partition $\nu\vdash n$, the multiplicity $m_{\nu,(n)} := \langle \psi^{(n)},\chi^{\nu} \rangle$ is equal to the cardinality of the set
    \[
        %A_{k,n,1}=
        \{T \in \SYT(\nu):\ \maj(T) \equiv 1 \pmod n\}.
    \]	
\end{theorem}
	
\begin{corollary}\label{lem:Kra-Wey}
    %	Letting $\omega$ denote a primitive character of $\ZZ_n\leq S_n$,
    %	%and let $\chi^k=\chi^{(n-k,1^k)}$ denote the irreducible character of $S_n$ associated %to $(n-k,1^k)$, the hook partition with leg-length $k\leq n-1$. Then 
    For every $0\leq k\leq n$, the multiplicity 
    $m_{k,(n)} := \langle \psi^{(n)},\chi^{(n-k,1^k)} \rangle$ 
    is equal to the cardinality of the set
    \[
        %A_{k,n,1}=
        \left\{ 1 \le a_1 < \cdots < a_k \le n-1 : \sum_{i=1}^k a_i \equiv 1 \!\!\!\!\pmod{n} \right\}.
    \]
\end{corollary}
	
\begin{proof}
    Denoting by $\binom{[n-1]}{k}$ the set of all $k$-subsets of $[n-1]$, 
    the map $\Des: \SYT(n-k,1^k) \rightarrow \binom{[n-1]}{k}$ is a bijection.
    The major index of a tableau is the sum of the elements of its descent set.
\end{proof}
	
Consider the following combinatorial identity.
\begin{proposition}\label{t:equivalence_KW}
    For every $0 \le k \le n$,
    %\begin{equation}\label{eq:equivalence_KW}
    \[
        \left| \left\{ 1 \le a_1 < \cdots < a_k \le n : \sum_{i=1}^k a_i\equiv 1 \!\!\!\!\pmod{n} \right\} \right| 
        = \sum_{d|(n,k)} \frac{\mu(d)(-1)^{(d+1)k/d}}{n} \binom{n/d}{k/d}.
    \]
    %\end{equation}	    
\end{proposition}
	
\begin{proof}
    Let %$x,q$ be indeterminates 
    $H(x,q):=\prod_{i=1}^n(1+xq^i)$. 
    Writing
    \[
        H(x,q) \equiv \sum_{k=0}^{n} \sum_{t=0}^{n-1} c_{k,t} x^kq^t \mod (q^n-1),
    \] 
    the cardinality we are interested in is the coefficient $c_{k,1}$ of $x^kq$.
    For any $d|n$ and $\eta$ a primitive $d$-th root of unity (we write $o(\eta)= d$), 
    \[
        H(x,\eta) 
        = \prod_{i=1}^n (1+x\eta^i) 
        = \left( \prod_{i=1}^d (1+x\eta^i) \right)^{n/d} 
        = (1-(-x)^d)^{n/d},
    \]
    where the last equality holds since both sides are polynomials of the same degree, with exactly the same roots and the same constant term. 
    %\[
        %H(x,\omega^i) = (1 - (-x)^d)^{n/d},
    %\]
    %if $o(\omega^i) = d$.
    %
    
    Let $\omega$ be a primitive $n$-th root of unity. Then
    \[
        H(x,\omega^j) = \sum_{t=0}^{n-1} h_t(x)\omega^{jt}
        \qquad (0 \le j \le n-1),
    \]
    where $h_t(x) = \sum_{k=0}^n c_{k,t} x^k$. 
    Fourier inversion gives
    \begin{align*}
        \sum_{k=0}^{n} c_{k,1}x^k 
        &= h_1(x)
        = \frac{1}{n} \sum_{j=0}^{n-1} H(x,\omega^j) \omega^{-j} \\
        %\sum_k c_{k,1}x^k 
        %&= \frac{1}{n} \sum_{\eta^n=1} \eta^{-1}H(x,\eta) 
        &= \frac{1}{n} \sum_{d|n}  (1-(-x)^d)^{n/d}
        \sum_{j \,:\, o(\omega^j)=d} \omega^{-j} \\
        &= \frac{1}{n} \sum_{d|n}  (1-(-x)^d)^{n/d}
        \mu(d) \\
        &= \frac{1}{n} \sum_{k=0}^{n} \sum_{d|(n,k)} \binom{n/d}{k/d} (-1)^{(d+1)k/d} x^k \mu(d),
    \end{align*}
    as required.
\end{proof}

In light of  Corollary~\ref{lem:Kra-Wey} one observes

\begin{observation}\label{prop:equiv_f_KW}
    Proposition~\ref{prop:e_i_formuli1} is equivalent to Proposition~\ref{t:equivalence_KW}.
\end{observation}
		
%		prove the following combinatorial identity as a corollary but also provide a direct proof.
%	\begin{proposition}%\label{prop:k-subsets mod r}
%     For every $0\leq k\leq n$ we have
%     \begin{equation*}
%    %\label{eq:equivalence}
%   |\{0<a_1<a_2<\cdots<a_k\leq n\mid \sum_{i=1}^k a_i\equiv   1\pmod{n}\}|=\sum_{d|(n,k)}\frac{\mu(d)(-1)^{k+k/d}}{n}\binom{n/d}{k/d}.
%    \end{equation*}
%  \end{proposition}

\begin{proof}
    %First, 
    %we will show that the identity follows from Proposition~\ref{prop:e_i_formuli} at $s=1$.
    Comparing Corollary~\ref{lem:Kra-Wey} with Proposition~\ref{prop:e_i_formuli1} %at $s=1$ 
    yields
    \begin{align*}
        \left| \left\{ 1 \le a_1 < \cdots < a_k \le n : \sum_{i=1}^k a_i \equiv 1 \!\!\!\!\pmod{n} \right\} \right|
        &= m_{k,(n)} + m_{k-1,(n)} 
        = e_{k,(n)} 
        = f_k(n) \\
        &= \sum_{d|(n,k)} \frac{\mu(d)(-1)^{(d+1)k/d}}{n} \binom{n/d}{k/d}.
    \end{align*}
%\end{proof}
%\begin{proof}[Direct proof]
    The opposite direction is similar. 
\end{proof}

\begin{remark}
    Noting that Proposition~\ref{prop:e_i_formuli1} is the special case $s=1$ of Theorem~\ref{prop:e_i_formuli}, one concludes that Proposition~\ref{t:equivalence_KW} is a consequence of the latter.
\end{remark}
	
%In Proposition~\ref{prop:e_i_formuli} we proved that for $s=1$ $m_k+m_{k-1}=e_k=f_k$. Now we give a quick direct proof that Lemma~\ref{lem:Kra-Wey} and our definition of agree. % in Proposition~\ref{prop:k-subsets mod r}.

It remains a challenge to find such a direct link between Schocker's general description of the multiplicity and our version in Theorem~\ref{prop:e_i_formuli}.

\begin{remark}\label{rem:ET}
    Proposition~\ref{prop:e_i_formuli1} %at $s=1$ %is essentially equivalent to well known results.
    is not new.
    For example, by the Gessel-Reutenauer Theorem (Theorem~\ref{thm:GR}) together with Observation~\ref{lem:1}, 
    %Equation~\eqref{eq:ET} is equivalent to
    Theorem~\ref{prop:e_i_formuli} at $s=1$ is equivalent to the following equation
    %\begin{equation}\label{eq:ET}
    \[
        |\{\pi\in \C_{(n)}: \ \Des(\pi)=[j]\}|
        = \frac{1}{n} \sum_{d|n} \mu(d) (-1)^{j - \lfloor j/d \rfloor} \binom{n-1}{j-1}
        %\cdot |\pi\in S_n:\ \Des(\pi)=[j]\}|  
        \qquad (0 \le j <n),        
    \]
    %\end{equation}
    which is an immediate consequence of a recent result of Elizalde and Troyka~\cite[ Theorem 3.1]{ET}.  
    %  	Proposition~\ref{prop:e_i_formuli} at $s=1$ may be deduced from~\cite[ Theorem 3.1]{ET}
    %\cite[Theorem 2.1]{GR}
    %	implies 
    %	\[	e_j=f_j=
    %	\]
    %The case $s=1$ seems to be known (and classical?).  
    %	This result is not new. 	It follows, for example, from~\cite[ Theorem 3.1]{ET}.
    %\footnote{
    An older proof was presented to us by Sheila Sundaram~\cite{Sundaram_personal}, deducing 
    Proposition~\ref{prop:e_i_formuli1} 
    %as a consequence of 
    from~\cite[Lemma 2.7]{Sundaram_Adv}. 
    %}.
    The reader is referred to~\cite{ET} for further discussion and relations to the enumeration of Lyndon words.  

% \todo{        Furthermore, by Gessel's Theorem~\ref{thm:fiber-Des}
 %       together with 
 %   Equation~\eqref{eq:Sundaram},   
 %   \[
 %   f_j=|\{\pi\in \C_{(r)}:\ \Des(\pi)=[j] \ {\rm{or}}\ [j-1] \}|.
  %  %=|\{\pi\in \C_{(r)}:\ \CDes(\pi)=[j] \}|.
%    \]
%    See~\cite[Theorem 3.1]{ET}.

%    This formula and analogous formuli for 
%    enumeration of cycles with prescribed descent set are presented in~\cite[Theorem 3.1]{ET}. The formuli       are obtained by Elizalde and Troyka using  %recent enumerative result     enumeration of Lyndon words (the approach was introduced in~\cite{GR}).  
%    
%        {\bf To be explained: the relation to enumeration of Lyndon words}
%    }
% 
\end{remark}

%   \todo{Regarding Proposition~\ref{prop:e_i_formuli1}, 
%   this nonnegativity result is not new.
%       As noted by Sheila Sundaram, it follows from~\cite[Lemma 2.7]{Sundaram_Adv} that thus nonnegative.
%       }

\subsection{Cellini's cyclic descents}\label{sec:Cellini}%\ \\
	
In this subsection it is shown that the natural approach does not provide a cyclic descent extension for conjugacy classes in $S_n$.
	
\medskip
	
Recall the notion of {\em cyclic descent set} on permutations which was defined by Cellini~\cite{Cellini} 
%\begin{equation}\label{e.cellini}
\[
	\CDes(\pi) 
	:= \{1 \leq i \leq n \,:\, \pi_i > \pi_{i+1} \},
\]
with the convention $\pi_{n+1}:=\pi_1$.
	
Special subsets of $S_n$, for which Cellini's cyclic descent set map	is closed under cyclic rotation, thus determines a cyclic extension of $\Des$, are presented in~\cite{ER19}.
%a recent preprint by Elizalde-R {\bf ??}. 
%	\bigskip
However, it seems that there are only two conjugacy classes ($2$-cycles in $S_3$ and $3$-cycles in $S_4$) for which Cellini's cyclic descent map 
is closed under cyclic rotation modulo $n$. Hence this cyclic extension does not serve our purposes.
	
It is unsettled whether there are other conjugacy classes, for which Cellini's $\CDes$ map is closed
under cyclic rotation.
Below are some partial results.
	
\begin{proposition}\label{prop:Cel1}
	For every $n>1$ and conjugacy class of cycle type $(r^s)$, ($n=rs$), Cellini's cyclic descent map is not is closed under cyclic rotation modulo $n$.
\end{proposition}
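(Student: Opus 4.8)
The statement asserts that for a conjugacy class $\mathcal C_{(r^s)}$ in $S_n$ (with $n = rs > 1$), Cellini's $\CDes$ map is not closed under the cyclic shift $\mathrm{sh}$ acting on subsets of $[n]$. My plan is to prove the contrapositive-flavored statement directly: exhibit, for each such $(r^s)$, a specific permutation $\pi \in \mathcal C_{(r^s)}$ whose Cellini cyclic descent set $\CDes(\pi)$ is not of the form $\CDes(\pi')$ for any $\pi' \in \mathcal C_{(r^s)}$, or — more robustly — show that the multiset $\{\CDes(\pi) : \pi \in \mathcal C_{(r^s)}\}$ fails to be invariant under $\mathrm{sh}$. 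The cleanest route is a counting/invariant argument: if $\CDes$ were closed under rotation, then the generating function $\sum_{\pi \in \mathcal C_{(r^s)}} \mathbf x^{\CDes(\pi)}$ would be invariant under the substitution implementing $\mathrm{sh}$, and in particular the fiber sizes $|\{\pi \in \mathcal C_{(r^s)} : \CDes(\pi) = J\}|$ would depend only on the $\mathrm{sh}$-orbit of $J$. I would then contradict this by comparing two sets $J$ in the same orbit with visibly different fiber sizes.

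\textbf{Key steps.} First I would recall the elementary fact that for $\pi \in S_n$, $\CDes(\pi) = \Des(\pi) \sqcup \{n\}$ exactly when $\pi_n > \pi_1$, and $\CDes(\pi) = \Des(\pi)$ otherwise; in particular $n \in \CDes(\pi)$ iff $\pi_n > \pi_1$, i.e. iff $\pi_1 \ne n$ is \emph{not} forced but rather $\pi$ has a "wraparound descent". Since $n \notin \Des(\pi)$ always, the count $|\{\pi \in \mathcal C_{(r^s)} : n \in \CDes(\pi)\}|$ is the number of elements of the class with $\pi_n > \pi_1$. Second, consider the orbit of a singleton $\{1\}$ under $\mathrm{sh}$: it is $\{\{1\},\{2\},\dots,\{n\}\}$. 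If $\CDes$ were $\mathrm{sh}$-equivariant via some bijection $p$ of $\mathcal C_{(r^s)}$, all these singleton fibers would have equal size. But $\{\pi : \CDes(\pi) = \{i\}\}$ for $i \le n-1$ means $\pi$ has its unique (ordinary) descent at position $i$ and $\pi_n < \pi_1$, whereas $\{\pi : \CDes(\pi) = \{n\}\}$ means $\pi$ is increasing, $\pi = \mathrm{id}$, which is impossible for $r > 1$ — so that fiber is empty, while the others are generally nonempty. I would verify nonemptiness of at least one singleton fiber $\{i\}$, $i \le n-1$: take a suitable rearrangement of $1,\dots,n$ that is unimodal with the single descent at $i$ and lies in cycle type $(r^s)$; for $n$ large enough such permutations exist and one checks the cycle type by direct construction (e.g. choosing $i$ so the resulting word has the right cycle structure, or invoking that for fixed cycle type the descent class is nonempty for a range of descent sets by Gessel–Reutenauer positivity, Theorem~\ref{thm:GR}). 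This already yields the contradiction for all $(r^s)$ with $r > 1$; the case $r=1$, $s=n$ is the identity class, trivially treated in the surrounding remark, and can be excluded or handled separately.

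\textbf{Main obstacle.} The genuinely delicate point is handling small $n$ and the two genuine exceptions mentioned in the text — $(r^s) = (2^?)$... actually the exceptions to closure among \emph{all} conjugacy classes are $2$-cycles in $S_3$ and $3$-cycles in $S_4$, and neither is of the form $(r^s)$ with all parts equal and $s>1$ or... wait: $(3)$ in $S_4$ is not $(r^s)$ type either in the problematic sense, and $(2)$ in $S_3$ is a single $2$-cycle which is not all-equal-parts with $n=rs$ unless we read $(2,1)$ — so in fact \emph{no} conjugacy class of type $(r^s)$ with $n=rs$ is among the exceptions, which is consistent with the claim. So the real obstacle is purely bookkeeping: ensuring the counterexample permutation genuinely has cycle type $(r^s)$ and genuinely has the claimed $\CDes$. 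I expect this to require splitting into the case $r$ even versus odd (parity affects which unimodal one-descent permutations are $(r^s)$-type, since a single $n$-cycle is always achievable but $(r^s)$ with $s>1$ needs a product structure) and possibly a direct hand-check for $n \in \{2,3,4\}$. The argument via equal singleton-fiber sizes is robust enough that once nonemptiness of one non-$n$ singleton fiber is established, the proof closes, so I would concentrate effort precisely there.
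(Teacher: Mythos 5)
Your approach is essentially the paper's: the printed proof takes the explicit witness $\sigma=[s+1,s+2,\dots,n,1,\dots,s]\in\C_{(r^s)}$, which has $\CDes(\sigma)=\{n-s\}$, and observes that closure under rotation would force some element of the class to have cyclic descent set $\{n\}$, which only the identity permutation does (with the $r=1$ case handled separately since the identity's $\CDes$ is $\{n\}$ and the class is a singleton). The one step you leave open --- nonemptiness of a singleton fiber $\{i\}$ with $i<n$ --- is settled by noting that $\CDes(\pi)=\{i\}$ forces $\pi$ to be the rotation word $j\mapsto j+(n-i)\pmod n$ (a single permutation, so Gessel--Reutenauer positivity of $\Des$-fibers is not the right tool), whence one must choose $i=n-s$ to land in cycle type $(r^s)$, recovering exactly the paper's witness.
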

	
\begin{proof}
	%Denote the conjugacy class of cycle type $\mu$ by  $\C_\mu$.
	Recall the notation $\C_\lambda$ from Section~\ref{sec:background}.
	For $r=1$, $\C_{(1^n)}$ consists of the identity permutation only, whose 
	Cellini's cyclic descent set is $\{n\}$. Thus not closed under rotation.
	For $r>1$ %and $n=rm$, %a product of positive integers, and 
	let $\sigma=[s+1,s+2,\dots,n,1,2,\dots,s]$, in other words $\sigma$ is the permutation in $S_n$ defined by 
	\[
		\sigma(i)=i+s \pmod{n} \qquad (\forall i\in [n]).
	\]
	Then $\sigma$ is a product of $s$ disjoint cycles of length $r$ and Cellini's $\CDes$ of $\sigma$ is the singleton $\{n-s\}$. By equivariance property there must be a permutation of cycle type $(r^s)$ with cyclic descent set $\{n\}$. The only permutation in $S_n$ 
	with this property is the identity permutation, contradiction.
\end{proof}
	
\begin{proposition}
	For every conjugacy class of $k$-cycles, except $2$-cycles in $S_3$ and $3$-cycles in $S_4$, Cellini's cyclic descent map is not closed under cyclic rotation modulo $n$.
\end{proposition}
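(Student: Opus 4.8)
The plan is to show that if Cellini's map is closed under rotation on a conjugacy class $\C_\mu$, then the multiset $\{\CDes(\pi):\pi\in\C_\mu\}$ is invariant under the cyclic shift ${\rm sh}$, and then to exhibit a concrete failure of this invariance for every $k$-cycle class other than the two listed. Since the full cycle $k=n$ is the class $(n)=(n^1)$ already handled in Proposition~\ref{prop:Cel1}, I may assume $2\le k\le n-1$, so that $\mu=(k,1^{n-k})$ has at least one fixed point. Invariance of the multiset forces, for every $J\subseteq[n]$, the equality of fibre sizes $A_J:=|\{\pi\in\C_\mu:\CDes(\pi)=J\}|=A_{{\rm sh}(J)}$; taking for each point the sum over all $J$ containing it, this yields in particular that the marginals $A_i:=|\{\pi\in\C_\mu:i\in\CDes(\pi)\}|$ are all equal. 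It therefore suffices to produce a single pair $(J,{\rm sh}(J))$ whose counts disagree.

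First I would treat the marginals, which settle the bulk of the cases. Writing $A_1=|\{\pi:\pi(1)>\pi(2)\}|$ and $A_n=|\{\pi:\pi(n)>\pi(1)\}|$, a routine case analysis — according to whether the points $1,2,n$ lie in the support of the $k$-cycle or are fixed, together with the involution obtained by conjugating by $(1\,2)$, respectively $(1\,n)$, which pairs the ``generic'' cyclic arrangements in half — gives closed forms whose difference simplifies, by Pascal's identity, to
\[
A_n-A_1=(k-2)!\left[(k-1)\binom{n-1}{k}-\binom{n-2}{k-2}\right].
\]
The bracket vanishes exactly when $(n-1)(n-k)=k$, i.e.\ exactly when $k=n-1$, and for $2\le k\le n-2$ it is strictly positive. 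Hence for every $k$-cycle class with at least two fixed points the marginals are unequal, contradicting closure.

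The remaining, and genuinely harder, case is $\mu=(n-1,1)$, a single $(n-1)$-cycle with one fixed point, where the first-order test is useless: the formula gives $A_n=A_1$, and since the descent distribution over a conjugacy class is symmetric all the marginals $A_i$ in fact coincide. For $n\in\{3,4\}$ the class is genuinely closed — these are the two exceptions, disposed of by direct inspection — so only $n\ge5$ remains. Here I would pass to a second-order invariant, comparing the multiplicities of two cyclic descent sets lying in a common ${\rm sh}$-orbit but interacting differently with the wrap at position $n$; concretely one may take the two distance-two pairs $\{1,3\}$ and $\{1,n-1\}$ and count the permutations of cycle type $(n-1,1)$ realising each as its cyclic descent set, organising the enumeration by the position of the unique fixed point. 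Already for $n=5$ this yields $A_{\{1,3\}}=2\neq3=A_{\{1,4\}}$, breaking ${\rm sh}$-invariance. The main obstacle is to perform this count uniformly in $n$ and confirm that the discrepancy persists for all $n\ge5$ — equivalently, to isolate one ${\rm sh}$-orbit of subsets on which the single fixed point forces an asymmetric distribution — which would complete the proof.
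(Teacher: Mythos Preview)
Your approach for $2\le k\le n-2$ via the marginals $A_i$ is valid and the closed form $A_n-A_1=(k-2)!\bigl[(k-1)\binom{n-1}{k}-\binom{n-2}{k-2}\bigr]$ checks out; it is a genuinely different route from the paper, which never counts anything. The paper instead exhibits a single $k$-cycle $\sigma=[k,1,2,\dots,k-1,k+1,\dots,n]$ with $\CDes(\sigma)=\{1,n\}$ and then argues that the only permutation in $S_n$ with $\CDes=\{1,2\}$ and at least one fixed point has cycle type $(n-1,1)$; this immediately rules out all $k<n-1$. Your computation buys a quantitative statement (the exact marginal imbalance), at the cost of the case analysis you allude to; the paper's argument buys brevity.

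The real issue is your treatment of $k=n-1$, which you yourself flag as incomplete. Your plan---compare $A_{\{1,3\}}$ with $A_{\{1,n-1\}}$ uniformly in $n$---would require a delicate enumeration organised by the fixed point, and you have only verified $n=5$. The paper avoids this entirely by the same ``one witness'' trick, applied one level up: for $n>4$ it takes
\[
\sigma=(1,n-1,n-2,\dots,5,4,2,3)(n)=[\,n-1,\,3,\,1,\,2,\,4,\,5,\dots,n-2,\,n\,],
\]
which has $\CDes(\sigma)=\{1,2,n\}$, and then shows that \emph{no} permutation $\pi$ of cycle type $(n-1,1)$ can have $\CDes(\pi)=\{1,n-1,n\}$. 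Indeed, such a $\pi$ would have $\pi(2)$ minimal and $\pi(n-1)$ maximal, so $\pi(2)=1$ and $\pi(n-1)=n$; writing $\pi(1)=k$ with $1<k<\pi(n)<n$, the cycle through $1$ is $(1,k,k-1,\dots,2)$, of length $k$ with $2\le k\le n-2$, contradicting the cycle type. This closes the gap cleanly and is much lighter than the second-order count you propose.
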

	
\begin{proof}
	Letting $r=n$ in Proposition~\ref{prop:Cel1}, statement holds on $n$-cycles.
	For $k<n$ let $\sigma\in \C_{(k,1^{n-k})}$ be the permutation $[k,1,2,\dots,k-1,k+1,k+2,\dots,n]$. Then $\CDes(\sigma)=\{1,n\}$.
	By the equivariance property,  there must be a $k$-cycle $\pi$ with cyclic descent set $\{1,2\}$. Then $\pi(3)$ is the minimal value thus it is equal to 1, and $\pi(1)$
	is the maximal value thus it is equal to $n$. Let $\pi(2)=x$, thus
	\[
		\pi=[n,x,1,2,\dots,x-1,x+1,\dots,n-1],
	\]
	namely, for every $3<i\le x+1$, $\pi(i)=i-2$ and for every $x+1<i\le n$, $\pi(i)=i-1$. Thus $\pi$ has no fixed points unless $x=2$, in this case $\pi$
	has cycle type $(n-1,1)$.  One deduces that statement holds for $k<n-1$.
		
	%It remains to prove the statement for $k=n-1$ (and $n>4$). For odd $n$ a similar argument to the above works. By equivariance property there must be a $k$-cycle $\pi$ with cyclic descent set $\{2,3\}$. Then $\pi(4)$ is the minimal value thus equal to 1, and $\pi(2)$
	%is the maximal value thus equal to $n$. To have a fixed point we must have $\pi(3)=3$, thus
	%\[
	%\pi=[n-1,n,3,1,2,4,\dots,n-2],
	%\]
	%and since $n$ is odd $\pi$ has cycle type  $((n-1)/2,(n-1)/2,1)$, contradiction.
		
	%For $p\bigm|n-1$ a similar argument to the above works. By equivariance property there must be a $n-1$-cycle $\pi$ with cyclic descent set $\{p,p+1\}$. Then $\pi(p+2)$ is the minimal value thus equal to 1, and $\pi(p)$ is the maximal value thus equal to $n$. To have a fixed point we must have $\pi(p+1)=p+1$, thus
	%\[
	%\pi=[n-p+1,\ldots,n-1,n,p+1,1,2,\ldots,p,p+2,\ldots,n-p].
	%\]
	%The cycle containing $n$ is $(n,n-p,n-2p,\ldots,2p+1,p)$ shorter than $n-1$, contradiction.
	For the $(n-1)$-cycles an argument similar to the above works. For $n>4$ the permutation $\sigma=(1,n-1,n-2,\ldots,5,4,2,3)(n)$, that is, $[n-1,3,1,2,4,5,\ldots,n-2,n]\in \C_{(n-1,1)}$ has cyclic descent set $\CDes(\sigma)=\{1,2,n\}$. 
	By the equivariance property, there is a permutation $\pi\in \C_{(n-1,1)}$ with cyclic descent set $\{1,n-1,n\}$. Then $\pi(2)$ is the minimal value thus it is equal to $1$, and $\pi(n-1)$ is the maximal value thus it is equal to $n$. If $\pi(n)=l$ and $\pi(1)=k$ then $1<k<l<n$ and $\pi=[k,1,\dots,k-1,k+1,...,l-1,l+1,\dots,n,l]$, 
	so the cycle $(1,k,k-1,\ldots,2)$ of $\pi$ has length $2\le k<n-1$,  
	%at least $2$ and at most $n-2$, 
	contradicting the equivariance property.
	%\todo{Complete the proof for $n-1$ cycles in $S_{n}$ if $n-1$ is a prime! One has to consider $\{i,i+2\}$.}
	% if $n$ is even and $((n-1)/2,(n-1)/2,1)$ of $n$ is odd.
\end{proof}

%\medskip

\begin{conjecture} %Confirm that 
    Cellini's cyclic extension on a conjugacy class $\C$ is cyclic shift invariant only if $n \in \{3,4\}$ and $\C$ is the conjugacy class of $(n-1)$-cycles.
\end{conjecture}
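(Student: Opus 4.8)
The plan is to read the conjecture as a complete characterization and split the argument into a positive direction (the two listed classes \emph{do} carry a shift-invariant Cellini map) and a negative direction (all other classes do not). The two positive cases are the classes of type $(n-1,1)$ for $n=3$ (transpositions) and $n=4$ ($3$-cycles); for each I would simply list the multiset $\{\CDes(\pi)\}_{\pi\in\C}$ and check invariance under the shift $\mathrm{sh}$. Both are finite checks: for the $3$-cycles in $S_4$ the eight cyclic descent sets split into the orbit of cyclically consecutive pairs $\{1,2\},\{2,3\},\{3,4\},\{4,1\}$, each attained once, and the orbit $\{\{1,3\},\{2,4\}\}$, each attained twice, so the multiset is $\mathrm{sh}$-invariant. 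For the negative direction I would first reduce to the genuinely new case: every cycle type is either all parts equal, i.e.\ $(r^s)$; a single nontrivial cycle, i.e.\ $(k,1^{n-k})$ with $2\le k\le n-1$; or \emph{mixed}, meaning at least two parts are $\ge 2$. Proposition~\ref{prop:Cel1} disposes of the first family, and the $k$-cycle proposition disposes of the second (its two exceptions being exactly the positive cases), so it remains to handle mixed classes.

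The common mechanism is the observation that, since Cellini's $\CDes$ always extends $\Des$ and is non-Escher, it is a cyclic descent extension on $\C$ exactly when some bijection $p$ satisfies $\CDes(p(\pi))=\mathrm{sh}(\CDes(\pi))$; this happens iff the multiplicity function $m_\C(J):=|\{\pi\in\C:\CDes(\pi)=J\}|$ is constant on every $\mathrm{sh}$-orbit of $2^{[n]}$. I would therefore, for each mixed class, exhibit one orbit on which $m_\C$ is not constant. The singleton orbit $\{\{i\}\}$ is of no use here: one checks that $\CDes(\pi)=\{i\}$ forces $\pi$ to be the rotation $[n-i+1,\dots,n,1,\dots,n-i]$, whose cycle type is $(r^s)$, so the singleton orbit only ever detects the classes $(r^s)$. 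Instead I would use the orbit of cyclically consecutive pairs $\{\{i,i+1\}\}$ (indices cyclic), which has size $n$.

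Two facts would drive the argument. First, a boundary analysis: $\CDes(\pi)=\{n,1\}$ forces $\pi$ to be increasing on positions $2,\dots,n$ with $\pi_2=1$ and $\pi_n=n$, so writing $m=\pi_1$ one gets $\pi=(1,m,m-1,\dots,2)$ together with $n-m$ fixed points; hence $\{n,1\}$ (and symmetrically $\{n-1,n\}$) is realized \emph{only} by single-cycle types $(k,1^{n-k})$, and therefore $m_\C(\{n,1\})=0$ for every mixed class $\C$. Second, for each mixed $\mu$ I would construct a permutation $\sigma\in\C_\mu$ whose cyclic descent set is an \emph{interior} consecutive pair $\{i,i+1\}$ with $1\le i\le n-2$; this holds in every case I have verified, e.g.\ $[4,5,3,1,2]$ has type $(2,2,1)$ with $\CDes=\{2,3\}$, and $[4,6,5,1,2,3]$ has type $(4,2)$ with $\CDes=\{2,3\}$. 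Granting the construction, $m_\C(\{i,i+1\})\ge 1>0=m_\C(\{n,1\})$ while $\{i,i+1\}$ and $\{n,1\}$ lie in the same orbit, so $m_\C$ is not constant there and Cellini's map is not shift-invariant.

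The main obstacle is exactly this construction, performed uniformly over all mixed types: producing a permutation of prescribed cycle type $\mu$ (with two parts $\ge 2$) having exactly two cyclic descents at cyclically consecutive interior positions. Such a permutation consists of two increasing runs with the second run lying entirely below the first, joined through a single intermediate value, and I expect a case analysis organized by the two largest parts of $\mu$, the number of fixed points, and parity. The rigid ``double-dip'' templates such as $[n-2,n,n-1,1,2,\dots,n-3]$ realize only sporadic types (here $(3,2),(4,2),(5,2),(5,3),\dots$), so one needs genuine freedom in distributing the values among the two runs and the intermediate slot in order to hit an arbitrary mixed $\mu$. Establishing this interior-realizability lemma in full generality is the crux, and is the reason the statement is at present only a conjecture.
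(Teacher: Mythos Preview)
The statement you are addressing is listed in the paper as a \emph{conjecture} in the open-problems section; the paper offers no proof, only partial evidence. So there is nothing to compare your argument against in the sense asked for. What the paper does prove, in Propositions~\ref{prop:Cel1} and the subsequent $k$-cycle proposition, is precisely the two reductions you invoke: the $(r^s)$ case and the $(k,1^{n-k})$ case are dispatched, leaving only the ``mixed'' classes (at least two parts of size $\ge 2$) unresolved. Your outline is therefore consistent with the paper's state of knowledge, and your boundary observation that $\CDes(\pi)=\{n,1\}$ forces cycle type $(m,1^{n-m})$ is correct and is exactly the kind of obstruction one would look for.

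That said, your proposal is, by your own admission, not a proof: the interior-realizability lemma (for every mixed $\mu$ there exists $\sigma\in\C_\mu$ with $\CDes(\sigma)=\{i,i+1\}$ for some $1\le i\le n-2$) is asserted on the basis of examples but not established. This is the entire remaining content of the conjecture once the paper's two propositions are in hand, so the gap is not a technicality but the substance of the open problem. Your template idea of two increasing blocks plus one intermediate value is promising, but the cycle type of such a permutation is tightly constrained, and it is not clear that the available freedom suffices to hit an arbitrary mixed $\mu$; a genuine case analysis (or a different invariant than the consecutive-pair orbit) would be needed to close this.
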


\subsection{Palindromicity of hook multiplicities}
\label{sec:palindrom}	

%\smallskip

A sequence $a_0,\ldots,a_n$ (equivalently, the polynomial $a_0+a_1x+\cdots+a_nx^n$) is 	
%\emph{self-dual} 
{\em palindromic} (or {\em symmetric})
if $a_i=a_{n-i}$ for all $0 \le i \le n$. 

For a partition $\lambda\vdash n$ recall the notation
\[
    m_{k,\lambda}
    := \langle \psi^{\lambda},\chi^{(n-k,1^k)} \rangle 
    \qquad (0\le k<n).
\]

In this subsection we prove

\begin{proposition}\label{conj:r2}
    Consider the partition $\lambda = (r^s)$ for positive integers $r$ and $s$.
    \begin{enumerate}
        \item 
        If $s=1$ then the hook-multiplicity sequence  $m_{0,(r)},\,m_{1,(r)},\ldots, m_{r-1,(r)}$ is palindromic if and only if either $r$ is odd or $r\equiv 0 \pmod 4$. %, or $r=2$. 

        \item
        If $s>1$ then the hook-multiplicity sequence $m_{0,(r^s)},\,m_{1,(r^s)},\ldots, m_{rs-1,(r^s)}$ is palindromic if and only if $r\equiv 0 \pmod 4$.
        %, or $r=3$ and $s=2$. 
        %, or $r=3$ and $s=2$.
        %$r \in \{1,2,3\}$. 
    \end{enumerate}
\end{proposition}

%The proof will use the following observation. 

\begin{proof}%[Proof of Proposition~\ref{conj:r2}.]

{\bf 1.} 
Assume first that $s=1$.
Recall the notations $M_{(r)}(x) := \sum_{j=0}^{r-1} m_{j,(r)}x^j$ and 
$F_r(x) := \sum_{j=0}^{r} f_{j}(r)x^j$.  
%First, 
By Corollary~\ref{t:F_and_M},
%Proposition~\ref{prop:e_i_formuli1} and Equation~\eqref{eq:alternating}, 
$F_r(x) = (1+x)M_{(r)}(x)$. 
Hence $M_{(r)}(x)$ is palindromic if and only if $F_r(x)$ is palindromic.  

%By~\eqref{eq:F}, %Definition~\ref{def:f}, 
%%$f_0(2) = 0$ and $f_2(2) = 1$, so 
%$F_2(x) = x+x^2$ is palindromic.  

If $r$ is odd then, for every $j$, every divisor $d|(r,j)$ is odd, hence $(d+1)j/d$ 
%and $(d+1)(r-j)/d$ are 
is even. 
Thus, by Definition~\ref{def:f}, %for every odd $r$
\[
    f_j(r)
    = \sum_{d|(r,j)} \frac{\mu(d)}{r} \binom{r/d}{j/d} 
    = \sum_{d|(r,r-j)} \frac{\mu(d)}{r} \binom{r/d}{(r-j)/d}
    = f_{r-j}(r) 
    \qquad (0 \le j \le r),
\]
 and $F_r(x)$ is palindromic.
 
Next consider the case $r\equiv 0 \pmod 4$. 
Since $\mu(d)=0$ for $d\equiv 0 \pmod 4$, Definition~\ref{def:f} implies that  
\begin{align*}
    f_j(r)
    &= \sum_{d|(r,j)} 
    \frac{\mu(d)(-1)^{(d+1)j/d}}{r} \binom{r/d}{j/d} \\
    &= \sum_{\substack{d|(r,j) \\ d \text{ odd}}}
    \frac{\mu(d)(-1)^{(d+1)j/d}}{r} \binom{r/d}{j/d}
    + \sum_{\substack{d|(r,j) \\ d \equiv 2\!\!\!\! \pmod 4}}
    \frac{\mu(d)(-1)^{(d+1)j/d}}{r} \binom{r/d}{j/d}.
    %\\
    %&= \sum\limits_{d|(r,j)\atop 2\not\vert d} 
    %\frac{\mu(d)}{r} \binom{r/d}{j/d}+\sum\limits_{d|(r,j)\atop 
    %d\equiv 2 \pmod 4} 
    %\frac{\mu(d)(-1)^{(d+1)j/d}}{r} \binom{r/d}{j/d}
\end{align*}
Again, if $d|(r,j)$ is odd then $(d+1)j/d$ is even. 
If $d \equiv 2 \pmod 4$ then
\[
    (-1)^{(d+1)j/d} (-1)^{(d+1)(r-j)/d}
    = (-1)^{(d+1)r/d} = 1,
\]
since $r/d$ is even. %$r \equiv 0 \pmod 4$.
It follows that
\[
    (-1)^{(d+1)j/d} = (-1)^{(d+1)(r-j)/d}
\]
in this case. 
%Also, for every $d$ odd $(d+1)j/d$ is even for all $j$. 
We deduce that if $r\equiv 0\pmod 4$ then, for every $0 \le j \le r$,
\begin{align*}
    f_j(r) 
    &= \sum_{\substack{d|(r,j) \\ d \text{ odd}}} 
    \frac{\mu(d)}{r} \binom{r/d}{j/d} 
    + \sum_{\substack{d|(r,j) \\ d \equiv 2\!\!\!\! \pmod 4}}
    \frac{\mu(d)(-1)^{(d+1)j/d}}{r} \binom{r/d}{j/d} \\
    &= \sum_{\substack{d|(r,r-j) \\ d \text{ odd}}}
    \frac{\mu(d)}{r} \binom{r/d}{(r-j)/d}
    + \sum_{\substack{d|(r,r-j) \\ d \equiv 2\!\!\!\! \pmod 4}}
    \frac{\mu(d)(-1)^{(d+1)(r-j)/d}}{r} \binom{r/d}{(r-j)/d} 
    = f_{r-j}(r),  
 %\qquad (0\le j\le r),
 \end{align*}
 hence $F_r(x)$ is palindromic.

%if and only if $r$ is odd or divisible by $4$.
%Now, %for $s=1$, 
%by Definitions~\ref{def:e} and ~\ref{def:f} and Proposition~\ref{prop:e_i_formuli1},  $e_{j,(r)}=f_j=f_{r-j}=e_{r-j,(r)}$ for all $j$, if and only if 
%either $r$ is odd or $r\equiv 0 \mod 4$. 
%Clearly, for every $0\le j\le r$, $d|(r,j)$ if and only if $d|(r,r-j)$. 
%Since $\binom{r/d}{j/d}=\binom{r/d}{(r-j)/d}$ for all $d|(r,j)$,  
%It follows from 
 % that  

%then $f_j(r)=f_{r-j}(r)$.  
%One concludes that if %$r>2$ and 
%$r$ is odd or divisible by $4$ 
%then %, by Definition~\ref{def:f}, 
%for every $0\le j\le r$, = $f_j(r)=f_{r-j}(r)$, thus
%\eqref{eq:F}, 
%$F_r(x)$ is palindromic.

On the other hand, if %$r>2 $ and 
$r\equiv 2 \pmod 4$ then,  
letting $j = 2$,
\[
    f_2(r) 
    = \frac{1}{r} \left[ \binom{r}{2} + \binom{r/2}{1} \right]
    = \frac{r}{2}
    \ne \frac{r-2}{2}
    = \frac{1}{r} \left[ \binom{r}{r-2} - \binom{r/2}{(r-2)/2} \right]
    = f_{r-2}(r),
\]
thus $F_r(x)$ is not palindromic in this case.
%Note, in particular, that $F_2(x) = x+x^2$ looks palindromic but in fact the sequence $(f_0(2),f_1(2),F_2(2) = (0,1,1)$.

\noindent 
{\bf 2.}  
Assume now that $s > 1$.
By Equation~\eqref{eq:alternating}, 
$\sum_{j=0}^{rs} e_{j,(r^s)}x^j
= (1+x) \sum_{j=0}^{rs-1} m_{j,(r^s)}x^j$.
Thus the sequence $m_{0,(r^s)}, \ldots m_{rs-1,(r^s)}$ is palindromic if and only if the sequence $e_{0,(r^s)}, \ldots, e_{rs,(r^s)}$ is.
We will show that this happens if and only if $r \equiv 0 \pmod 4$.
%it suffices to show that $e_{j,(r^s)}=e_{rs-j,(r^s)}$ for all $j$ if and only if $r\equiv 0 \pmod 4$.  

%The case $r=3$ and $s=2$ follows from 
%Remark~\ref{rem:le_3}. 
%Corollary~\ref{power_series_form}; see the comments following its proof. 
%Assume that $r>4$ and $s>1$. 
%We will apply Theorem~\ref{prop:e_i_formuli} to prove  %for $s>1$, 
%$e_{j,(r^s)}=e_{rs-j,(r^s)}$ for all $j$  
%if %and only if 
%$r\equiv 0 \pmod 4$.  
Assume first that $r \equiv 0 \pmod 4$.
Following Definition~\ref{def:partition},
for each partition 
$\gamma  = (\gamma_1, \ldots, \gamma_s) \in P_{r,s}(i)$ 
consider the complementary partition 
$\bgamma = (\bgamma_1, \ldots, \bgamma_s) \in P_{r,s}(rs-i)$, defined by  
$\bgamma_\ell := r-\gamma_{s+1-\ell}$ $(1 \le \ell \le s)$,
or equivalently by
$k_j(\bgamma) = k_{r-j}(\gamma)$ $(0 \le j \le r)$.
%have the corresponding $k_i(\gamma)=k_{r-i}(\bar\gamma)$ multiplicities. 
Since $r \equiv 0 \pmod 4$,
$f_j(r) = f_{r-j}(r)$ $(0 \le j \le r)$,
by Part 1 of the current proof.
%and only if  either $r$ is odd, or 
%or $r=2$. 
In addition, $j+1$ and $r-j+1$ have the same parity when $r$ is even and $j$ is arbitrary. 
Using Proposition~\ref{prop:e_i_formuli},
it follows that for $r\equiv 0 \pmod 4$ and any $0 \le i \le rs$,
\begin{align*}
    e_{i,(r^s)}
    &= \sum_{\gamma \in P_{r,s}(i)} \prod_{j=0}^{r}
    (-1)^{(j+1)k_j(\gamma)} \binom{(-1)^{j+1} f_j(r)}{k_j(\gamma)} \\
    &= \sum_{\gamma \in P_{r,s}(rs-i)} \prod_{j=0}^{r}
    (-1)^{(r-j+1)k_{r-j}(\gamma)} \binom{(-1)^{r-j+1} f_{r-j}(r)}{k_{r-j}(\gamma)}= e_{rs-i,(r^s)},  
\end{align*}
proving palindromicity in this case. 		

%\todo{
%For the opposite direction, 
%We will show that $e_{j,(r^s)}\ne e_{rs-j,(r^s)}$ 
%for $s>1$ certain small $j$-s,  and $4<r\not\equiv 0\pmod 4$.\\
%if I am not wrong, 
For the converse,
%\begin{observation}\label{rem:s1}
    consider again the explicit formula for $e_{i,(r^s)}$ (from Proposition~\ref{prop:e_i_formuli}) written above. 
    Each summand corresponds to a partition $\gamma \in P_{r,s}(i)$. 
    %or, equivalently, to its sequence of row length multiplicities, which is a sequence $k_0, \ldots, k_r$ of nonnegative integers satisfying two linear equations:
    %$\sum_{j=0}^{r} k_j = s$ and
    %$\sum_{j=0}^{r} jk_j = i$.
    According to Remark~\ref{rem:when_factor_is_zero}, the summand is zero if and only if either
    $k_j(\gamma) > f_j(r)$ for some odd $j$, or
    $k_j(\gamma) > 0 = f_j(r)$ for some even $j$.
    
    %\begin{itemize}
        %\item[1.] 
        By Observation~\ref{obs:f_values}, 
        $f_0(r) = 0$ for $r>1$, 
        $f_r(r) = 0$ for $r>2$, and
        $f_1(r) = f_{r-1}(r) = 1$ for $r>0$.
        It follows that for $\gamma \in P_{r,s}(i)$ to contribute a nonzero summand, it is necessary that
        $k_0(\gamma) = 0$ (for $r>1$), 
        $k_r(\gamma) = 0$ (for $r>2$), 
        $k_1(\gamma) \in \{0,1\}$ (for $r>0$) and 
        $k_{r-1}(\gamma) \in \{0,1\}$ (for $r-1$ odd).
        
        %If $r>1$ then only partitions %$\gamma$ 
        %with exactly $s$ positive parts contribute to the sum on the right hand side.
        %Indeed, since $f_0(r) = 0$ for $r > 1$ $f_r(r)=0$ for $r>2$ by Observation~\ref{obs:f_values}, 
        %if $k_0(\gamma) > 0$ or $k_r(\gamma)>0$ 
        %then the contribution of the partition $\gamma$ to the sum is zero. 
        %This explains the last equality in Proposition~\ref{prop:e_i_formuli}. 
        %	$\binom{f_0+k_0-1}{k_0-1}=0$.
        %\item[3.] 
        %Also, since $f_1(r)=1$ for all $r$, if $k_1(\gamma)> 1$ then the contribution of the partition $\gamma$ to the sum is zero. Similarly, for even $r$, if $k_{r-1}(\gamma)> 1$ then the contribution of the partition $\gamma$ to the sum is zero.

        %\item[2.] 
        %It follows that for $i<2s-1$ we have 
        %$e_{i,(r^s)}=0$ (for all $r>1$)   
        %and  $e_{rs-i,(r^s)}=0$ (for $r$ even).  
        %It also follows that for $i<s$ we have
        %$e_{rs-i,(r^s)}=0$ (for odd $r$). 
        %On the other hand, $e_{rs-s,(r^s)}=1$ (for odd $r$). 
        %Also $f_2(r)>0$ (for every $r>1$),
        %therefore $e_{2s-1,(r^s)} = \binom{f_2(r)+s-2}{s-1}>0$
        %and $e_{rs-2s+1,(r^s)} = \binom{f_{r-2}(r)+s-2}{s-1}>0$.

        %\item[2.] 
        Assume first that $r>1$ is odd. 
        The restrictions on $k_0(\gamma)$ and $k_1(\gamma)$ imply that for $i=s$ there is no relevant $\gamma \in P_{r,s}(s)$
        (since $s > 1$). 
        The restriction on $k_r(\gamma)$ implies that for $i=rs-s$ there is only one relevant $\gamma \in P_{r,s}(rs-s)$, with $k_{r-1}(\gamma) = s$ (and $k_j(\gamma) = 0$ for all other values of $j$).
        Thus 
        \[
            e_{rs-s,(r^s)}
            = \binom{s}{s}
            = 1 > 0
            = e_{s,(r^s)},
        \]
        and there is no palindromicity.
        
        If $r=1$ then $f_0(1) = f_1(1) = 1$ and the unique partition $\gamma \in P_{1,s}(i)$ has $k_1(\gamma) = i$ and $k_0(\gamma) = s-i$ $(0 \le i \le s)$. It follows that
        \[
            e_{i,(1^s)} 
            = \binom{s-i}{s-i} \binom{1}{i}
            = \begin{cases}
                1, &\text{if } i \in \{0,1\}; \\
                0, &\text{otherwise.}
            \end{cases}
        \]
        For $s>1$ this sequence is not palindromic.
        
        Assume now that $2 < r \equiv 2 \pmod 4$.
        The restrictions on $k_0(\gamma)$ and $k_1(\gamma)$ imply (actually, for any $r>1$) that 
        $e_{i,(r^s)}=0$ for $0 \le i < 2s-1$ and 
        $e_{2s-1,(r^s)} = \binom{f_2(r)+s-2}{s-1}$.
        %> 0$;
        %the last inequality follows from the fact that, by Part 1 of the current proof, 
        %$f_2(r) = r/2 >0$ for every $r \equiv 2 \pmod 4$. 
        %for all $r>1$. 
        %It also follows that for $i<s$ we have
        %$e_{rs-i,(r^s)}=0$ (for odd $r$). 
        %On the other hand, $e_{rs-s,(r^s)}=1$ (for odd $r$). 
        Similarly, the restrictions on $k_r(\gamma)$ and $k_{r-1}(\gamma)$ imply (for even $r>2$) that %for every even $r>2$ 
        $e_{rs-i,(r^s)}=0$ for $0 \le i < 2s-1$ and 
        $e_{rs-2s+1,(r^s)} = \binom{f_{r-2}(r)+s-2}{s-1}$.  
    %\end{itemize}
%\end{observation}
Recall, from Part 1 of the current proof, that for $r \equiv 2 \pmod 4$
\[
    f_2(r) 
    = \frac{r}{2} 
    > \frac{r-2}{2} 
    = f_{r-2}(r).
\]
Therefore
\[
    e_{2s-1,(r^s)}
    = \binom{f_2(r)+s-2}{s-1} 
    > \binom{f_{r-2}(r)+s-2}{s-1}
    = e_{rs-2s+1,(r^s)}
\]
and the sequence is not palindromic.

Finally, if $r=2$ then,
%$f_0(2) = 0$ and $f_1(2) = f_2(2) = 1$. The restrictions on $k_0(\gamma)$ and $k_1(\gamma)$ imply that 
by Remark~\ref{rem:le_3},
\[
    e_{i,(2^s)} 
    = \begin{cases}
        1, &\text{if } i \in \{2s-1,2s\}; \\
        0, &\text{otherwise}
    \end{cases}
\]
and this sequence is not palindromic.
This completes the proof. 
\end{proof}

\section{Final remarks and open problems}\label{sec:open}

%\todo{Expand}

%\bigskip

%\todo{YR: The following pa and conjecture is a duplication with introduction, where the conjecture was stated for all partitions $\mu\vdash n$.}

%{sec:unimodality}

	%\emph{unimodal} if there exists a middle index $0\le i_0\le n$ such that the sequence is increasing $a_i\leq a_{i+1}$ for $i< i_0$ and decreasing $a_i\geq a_{i+1}$ for $i\geq i_0$.

%\todo{ 

%{\bf Question:} 
%In light of Lemma~\ref{lem:distinct}, is it correct that for every partition $\lambda\vdash n$, the hook-multiplicity sequence
%$m_{0,\lambda},\ldots,m_{r-1,\lambda}$ is palindromic if and only if $\lambda$ is a disjoint union of $(r^s)$, distinct $r$-s, all $r\equiv 0 \pmod 4$, or $r$ odd and $s=1$?

%{\bf Ans:}(Shaul) That in these cases we have self duality follows from  Lemma~\ref{lem:distinct}, indeed. However, non duality of the other cases is not a consequence. The product of two non-symmetric poly's can be symmetric. 

%This result was proved by Shaul (also by Darij). Should it be mentioned?

%}

Recall the notation $m_{k,\lambda}:= \langle %\omega_r^{S_n}
\psi^{\lambda},\chi^{(n-k,1^k)} \rangle$. 
%where $\omega_r^{S_n}=???$. % and $\chi^{(n-k,1^k)}=???$.
By Proposition~\ref{prop:s=1_unimodal}, the \emph{hook-multiplicity sequence} $m_{0,(n)},\,m_{1,(n)},\ldots, m_{n-1,(n)}$ is unimodal; 
%in the case of the full cycle, that is, when $s=1$. 
We conjecture that it is unimodal for all partitions $\lambda\vdash n$.  %(Conjecture~\ref{conj:unimodal}).

%The following conjecture  
% was verified for all partitions of size $n\le 15$ 
% and for all partitions of rectangular shape $(r^s)$ for $r\leq 40,\,s\leq 5$. 
 
%% For a partition $\lambda \vdash n$ let
%%     \[
%%Finally, r
%Recall the notation $m_{k,\lambda}:=\langle \psi^\lambda, \chi^{(n-k,1^k)}\rangle$.
% %   \]
    \begin{conjecture}\label{conj:unimodal}
    For every partition $\lambda \vdash n$, the sequence
    \[
    m_{0,\lambda},m_{1,\lambda},\ldots,m_{n-1,\lambda}
    \]
    is unimodal.
    \end{conjecture}
    
 The conjecture has been verified for all partitions of size $n \le 15$ and for all partitions of rectangular shape $(r^s)$ with $r \le 40$ and $s \le 5$.   
    
%  \smallskip

Note that, by Lemma~\ref{t:unimodal}, Conjecture~\ref{conj:unimodal} can provide an alternative proof of Theorem~\ref{t:main_hook-alternating2}. 
 %For further discussion see Subsection~\ref{sec:open}
 
\medskip 
 
	A sequence $a_0,\ldots,a_n$ of real numbers is %called
	\emph{log-concave} if  $a_{i-1}a_{i+1}\le a_i^2$ for all  $0<i<n$. 
	It is not hard to show that a log-concave sequence with no internal zeros is unimodal, see e.g.~\cite{Brenti, Stanley}.
Since log-concavity implies unimodality, it is tempting to check whether the hook-multiplicity sequence is log-concave.

\begin{conjecture}\label{conj:r1}
For every partition $\lambda=(r^s)$ with even $r\ne 6$,
the hook-multiplicity sequence $m_{0,\lambda},\,m_{1,\lambda},\ldots, m_{n-1,\lambda}$ is log-concave.
\end{conjecture}

%\begin{conjecture}\label{conj:unimodal1}
%For every positive integers $r$ and $s$,
%the series $m_1,\,m_2,\ldots$ is unimodal.
%\end{conjecture}
%This conjecture was verified for every $r\le 40$ and $s\le 5$.

Conjecture~\ref{conj:r1} %and~\ref{conj:r2} were 
was checked for all $r\leq 40$ and $s\leq 5$.

\bigskip

Our proof of Theorem~\ref{thm:main} is not constructive. 
We conclude the paper with the following challenging %open 
problem.

\begin{problem} 
    Find %a constructive proof of Theorem~\ref{thm:main}, via
    an explicit combinatorial description of the cyclic descent extension on the conjugacy class of cycle type $\mu$, not equal to $(r^s)$ for a square-free $r$.
\end{problem}

A solution of this problem for the conjugacy classes of involutions 
%(of cycle type $\ne (1^s),(2^s)$) 
%, namely, for conjugacy classes of cycle type $(2^k,1^{n-2k})$, where $n$ is even and  $k\ne 0,n/2$, 
is presented in~\cite{Adin-R23}. The analogous problem for standard Young tableaux of fixed non-ribbon shape was solved by Huang~\cite{Huang}. 

%\todo{Add short discussion}

\end{document}